\documentclass[11pt]{article}
\usepackage{t1enc}
\usepackage{amsmath,amssymb,color}

\newtheorem{Def}{Definition}
\newtheorem{Prop}{Proposition}
\newtheorem{Assumption}{Assumption}
\newtheorem{Lemma}{Lemma}

\newtheorem{Remark}{Remark}

\newenvironment{proof}[1][Proof]{\textbf{#1.} }{\hfill $\square$}

\def \1{\mathbf{1}}
\newcommand{\essinf}{\operatorname{essinf}}
\newcommand{\esssup}{\operatorname{esssup}}
\def \eps{\varepsilon}

\def \bN{\mathbb{N}}
\def \bR{\mathbb{R}}
\def \bQ{\mathbb{Q}}
\def \bE{\mathbb{E}}
\def \bF{\mathbb{F}}
\def \bH{\mathbb{H}}
\def \bD{\mathbb{D}}
\def \bS{\mathbb{S}}
\def \bP{\mathbb{P}}
\def \bL{\mathbb{L}}
\def \bI{\mathbb{I}}
\def \bM{\mathbb{M}}
\def \bG{\mathbb{G}}

\def \cF{\mathcal{F}}
\def \cY{\mathcal{Y}}
\def \cX{\mathcal{X}}
\def \cE{\mathcal{E}}

\def \cP{\mathcal{P}}
\def \cS{\mathcal{S}}
\def \cA{\mathcal{A}}

\def \cT{\mathcal{T}}

\def \cG{\mathcal{G}}
\def \bsp{\bar{\mathfrak{p}}}

\def \sq{\mathfrak{q}}
\def \kq{\vartheta}

%
%

\def\be{\begin{eqnarray}}
\def\ee{\end{eqnarray}}

\def\b*{\begin{eqnarray*}}
\def\e*{\end{eqnarray*}}

%
%
%
%
%


\newcommand{\No}[1]{\left\|#1\right\|}     



\def \E{\mathbb{E}}
\def \F{\mathbb{F}}
\def \H{\mathbb{H}}
\def \L{\mathbb{L}}
\def \M{\mathbb{M}}
\def \N{\mathbb{N}}
\def \P{\mathbb{P}}
\def \Q{\mathbb{Q}}
\def \R{\mathbb{R}}


\def\Fc{{\cal F}}

\def\Mc{{\cal M}}

\def\Pc{{\cal P}}

\def\Yc{{\cal Y}}


\def\Fb{{\bar F}}

\def\Pb{{\overline \P}}




\def\Ych{{\widehat \Yc}}




\def\esup{{\rm ess \, sup}}



\def\x{\times}

\def\={\;=\;}
\def\.{\;.}

\def\eps{\varepsilon}


\def\1{{\bf 1}}


\def\Om{\Omega}
\def\om{\omega}

\def\crochetX{\langle X \rangle}
\def\ah{\widehat{a}}

\def\Bf{\mathfrak{B}}
\def\Sp{\mathbb S_d^{\geq 0}}
\def\ox{\otimes}
\def \w{\mathsf{w}}

\def\Omb{\overline{\Om}}
\def\omb{\bar \om}
\def\Fb{\overline{\F}}
\def\Fcb{\overline{\Fc}}

\def\0{\mathbf{0}}

\def\normeL2#1{\left\|{#1}\right\|_{L^2}}

\addtolength{\oddsidemargin}{-0.1 \textwidth}
\addtolength{\textwidth}{0.2 \textwidth}
\addtolength{\topmargin}{-0.1 \textheight}\addtolength{\textheight}{0.2 \textheight}

\bibliographystyle{plain}

\begin{document}

\title{Second order BSDE under monotonicity condition \\ and liquidation problem under uncertainty\footnote{This work was started while the first author was visiting the National University of Singapore, whose hospitality is kindly acknowledged.}}
\author{A. Popier \thanks{Laboratoire Manceau de Math\'ematiques, Le Mans Universit\'e, Avenue Olivier Messiaen, 72085 Le Mans cedex 9, France.}\hspace{-0.0cm} 
\thanks{Partially supported by F\'ed\'eration de Math\'ematiques des Pays de Loire, FR 2962 du CNRS.
\hfill\break
e-mail: {\tt alexandre.popier@univ-lemans.fr}} ,
Chao Zhou \thanks{Department of Mathematics, National University of Singapore, 10 Lower Kent Ridge Road, 119076, Singapore. 
} \hspace{-0.1cm} \thanks{Supported by Singapore MOE (Ministry of Education's) AcRF grant R--146--000--219--112.\hfill\break e-mail: {\tt matzc@nus.edu.sg}}
}
\date{\today}

\maketitle

\begin{abstract}
In this work we investigate an optimal closure problem under Knightian uncertainty. We obtain the value function and an optimal control as the minimal (super-)solution of a second order BSDE with monotone generator and with a singular terminal condition.
\end{abstract}

\vspace{0.5cm}
\noindent \textbf{2010 Mathematics Subject Classification.} 60H10, 60H30, 93E20.

\smallskip
\noindent \textbf{Keywords.} Optimal stochastic control, second order backward stochastic differential equation, monotone generator, singular terminal condition. 

\section{Introduction}

This paper is devoted to the study of an optimal liquidation problem under uncertainty. Roughly speaking, for some $\vartheta > 1$, we want to minimize the functional cost
\begin{equation}\label{eq:control_pb_intro}
J(\cX) = \sup_{\bP\in \cP} \bE^{\bP} \left[  \int_0^T \left( \eta_s |\alpha_s|^\vartheta + \gamma_s |\cX_s|^\vartheta \right) ds + \xi  |\cX_T|^\vartheta  \right]
\end{equation}
over all progressively measurable processes $X$ that satisfy the dynamics
\begin{equation*}
\cX_s =x +\int_0^s \alpha_u du. 
\end{equation*}
The non-negative quantity $\xi$ is a penalty on the remaining value $\cX_T$ of the state process $\cX$. In particular when $\xi = +\infty$, $J(\cX)$ is finite only if the terminal constraint $\cX_T \1_{\xi = +\infty} = 0$ is satisfied. If the set of probability measures $\cP$ is a singleton, then the problem is solved in \cite{anki:jean:krus:13} and \cite{krus:popi:15} using a backward stochastic differential equation (BSDE in short) with singular terminal condition. Our goal is to extend these results to the case where there is model uncertainty, that is when the probability measure $\bP$ is not unique. Minimizing \eqref{eq:control_pb_intro} corresponds for an agent to compute the worst case scenario for the liquidation of her portfolio. 

The analysis of optimal control problems with state constraint on the terminal value is motivated by models of optimal portfolio liquidation under stochastic price impact (see, e.g. \cite{almg:12}, \cite{almg:chri:01}, \cite{fors:kenn:12}, \cite{gath:schi:11}, \cite{hors:nauj:14}, or \cite{krat:scho:13}, among many others). For a fixed probability $\bP$ (that is without the supremum in \eqref{eq:control_pb_intro}):
\begin{equation}\label{eq:class_control_pb_intro}
J(\cX,\bP) =  \bE^{\bP} \left[  \int_0^T \left( \eta_s |\alpha_s|^\vartheta + \gamma_s |\cX_s|^\vartheta \right) ds + \xi  |\cX_T|^\vartheta  \right]
\end{equation}
this position targeting problem \eqref{eq:class_control_pb_intro} and some variants have been studied in \cite{anki:jean:krus:13}, \cite{krus:popi:15}, \cite{grae:hors:qiu:13}, \cite{grae:hors:sere:13} or \cite{schie:13}. In this framework the state process $\cX$ denotes the agent's position in the financial market. At each point in time $t$ she can trade in the primary venue at a rate $\alpha_t$ which generates costs $\eta_t |\alpha_t|^\vartheta$ incurred by the stochastic price impact parameter $\eta_t$. The term $\gamma_t |\cX_t|^\vartheta$ can be understood as a measure of risk associated to the open position. $J(\cX,\bP)$ thus represents the overall expected costs for closing an initial position $x$ over the time period $[0,T]$ using strategy $\cX$, with a terminal cost $\xi |\cX_T|^\vartheta$. The penalization $\xi$ is $\F_T$-measurable and takes value in $[0,\infty]$. The total cost $J(\cX,\bP)$ is finite if and only if $\cX_T \1_{\xi = +\infty} = 0$ a.s. The case $\xi = +\infty$ a.s. corresponds to the liquidation constraint: $X_T =0$ a.s., that is the position has to be closed imperatively. The optimal strategies and the value function of this control problem \eqref{eq:class_control_pb_intro} are characterized in \cite{anki:jean:krus:13} and \cite{krus:popi:15} (see also \cite{grae:hors:qiu:13} for the use of BSPDEs) by the minimal supersolution $(y,m)$ of the BSDE:
\begin{equation} \label{eq:bsde_contr_prob}
dy_t  =  \frac{y_t^{\sq}}{(\sq-1)\eta_t^{\sq-1}} dt - \gamma_t dt  + dm_t
\end{equation}
with $\displaystyle \liminf_{t\to T} y_t \geq \xi$. Here $\sq > 1$ is the H\"older conjugate of $\vartheta$ and $m$ is a martingale. Since $\xi$ can be equal to $+\infty$, such a BSDE is called \textit{singular}. In \cite{anki:jean:krus:13} and \cite{krus:popi:15} sufficient conditions on the coefficient processes $\eta$ and $\gamma$ are provided such that there exists a minimal supersolution to \eqref{eq:bsde_contr_prob} and then by a verification theorem based on a penalization argument, it is proved that $\inf_{\cX} J(\cX,\bP) = y_0$ (see the details in Section \ref{sect:standard_form_liquid_pb}). 

\vspace{0.5cm}
When $\bP$ is not unique, we need to solve 
$$J(\cX) = \sup_{\bP \in \cP} J(\cX,\bP) = \sup_{\bP\in \cP} y_0^\bP$$
where $y^\bP$ is the minimal supersolution of \eqref{eq:bsde_contr_prob} under the probability measure $\bP$. From the theory of second order BSDE (2BSDE in short) introduced by \cite{sone:touz:zhan:12} and \cite{sone:touz:zhan:13}, our problem can be solved with this useful tool. Nevertheless the generator $f$ of our BSDE \eqref{eq:bsde_contr_prob} is not Lipschitz continuous but only monotone w.r.t. $y$:
$$f(t, y) = - \frac{y^{\sq}}{(\sq-1)\eta_t^{\sq-1}} + \gamma_t.$$  
This condition has been already considered in \cite{poss:13}, but under the additional assumption that the generator is of linear growth. The possibility to extend the existing results to a general monotone driver is mentioned in the paper \cite{poss:tan:zhou:15} (see in particular Section 2.4.5). Thereby following the ideas of \cite{poss:tan:zhou:15}, we want to show that a 2BSDE with monotone generator w.r.t. $y$ still has a unique solution. 

\vspace{1cm}
Let us precise the main contributions of the paper. Roughly speaking, the paper \cite{poss:tan:zhou:15} shows that if nice properties are known for BSDEs and reflected BSDEs, then it is possible to construct a solution for the 2BSDE. Hence to follow the scheme of \cite{poss:tan:zhou:15}, several properties of classical (reflected or not) BSDEs are needed. Several general results can be found in \cite{krus:popi:14} and \cite{krus:popi:17} for BSDEs in a general filtration and \cite{lepe:mato:xu:05}, \cite{klim:15} or \cite{bouc:poss:zhou:15} for reflected BSDEs (see also the references therein). But some technical results were missing in the general setting of the 2BSDEs: {\it Lipschitz approximation of BSDEs} (Lemma \ref{lem:L2_approx}) or {\it existence and uniqueness of the solution for a reflected BSDEs in a general filtration} (without quasi-left continuity) {\it when the driver is monotone} (Proposition \ref{prop:ref_BSDE_monotone_driver}). These results, even though useful for applications to 2BSDE, are expectable and the techniques employed are rather standard. This is the reason why they are postponed in the Appendix. 

The first part is devoted to 2BSDEs with a monotone driver (condition {\bf (H)}). The probabilistic setting is the same as \cite{poss:tan:zhou:15}. But to overcome this difficulty induced by monotonicity, we will impose some stronger integrability conditions\footnote{Sufficient to solve our control problem. Weaker integrability assumptions are left for future research.} {\bf C1} and {\bf C2} on the terminal value $\xi$ (and on the process $f^0$). Our main results are Proposition \ref{prop:uniq_2BSDE} (uniqueness) and Proposition \ref{prop:2BSDE_existence} (existence). Although the sketch is almost the same as in \cite{poss:tan:zhou:15}, several technical issues have to be taken into account in our setting. Moreover the monotonicity of the driver forces us to change the minimality condition on the non-decreasing process $K^\bP$. Instead of the classical assumption
$$\underset{\bP' \in \cP(t,\bP,\bF_+)}{\essinf^\bP} \bE^{\bP'} \left[K^{\bP'}_T - K^{\bP'}_t\bigg|  \cF_t^+ \right] = 0, \quad 0 \leq t \leq T, \ \bP-\mbox{a.s.}, \ \forall \bP \in \cP$$
we have here:
$$\underset{\bP' \in \cP(t,\bP,\bF_+)}{\essinf^\bP} \bE^{\bP'} \left[ \int_t^T \exp \left( \int_{t}^s \lambda_u^{\bP'} du \right)dK^{\bP'}_s \bigg|  \cF_t^+ \right] = 0, \quad 0 \leq t \leq T, \ \bP-\mbox{a.s.}, \ \forall \bP \in \cP$$
where $\lambda_s^{\bP'}$ is the increment of the generator evaluated at the solution $Y$ of the 2BSDE and at the solution  $y^{\bP'}$ of the classical BSDE under $\bP'$. 
In the Lipschitz setting, $\lambda^{\bP'}$ is bounded and thus can be removed, whereas under the monotone assumption, it is only bounded from above (see Definition \ref{def:sol_2BSDE}, Conditions \eqref{eq:minim_cond_K_2BSDE} and \eqref{eq:class_minim_cond_K} and the discussion in Section \ref{ssect:discussion}). 

Then we come back to our initial goal: the resolution of the optimal control problem \eqref{eq:control_pb_intro}. From our results on 2BSDEs, we can now obtain directly the value function and an optimal control for the unconstrained problem (Proposition \ref{prop:optim_unconst_pb}). For the constrained problem, a known difficulty concerns the filtration. Indeed to avoid the possibility of a uncontrolled jump for the orthogonal martingale part at the terminal time $T$, some additional hypothesis on the filtration is needed (see \cite{popi:16}, Section 2.2). Under this technical condition on the filtration, we prove that the 2BSDE with singular terminal condition  has a minimal super-solution (Proposition \ref{prop:sing_2BSDE} and Remark \ref{rem:sing_gen_2BSDE}) and that we can solve \eqref{eq:control_pb_intro} using this super-solution (Proposition \ref{prop:constr_pb}).

\vspace{1cm}
The paper is decomposed as follows. In Section \ref{sect:2BSDE} we use the scheme developed in \cite{poss:tan:zhou:15} to obtain existence and uniqueness for the 2BSDE. In Section \ref{sect:liquidation_pb} we solve the control problem \eqref{eq:control_pb_intro} using 2BSDE with monotone driver and singular terminal condition. In the Appendix, we recall and develop some results concerning BSDE and reflected BSDE with monotone driver.

\section{Second order BSDE with monotone generator} \label{sect:2BSDE}

This section is devoted to the extension of the results in \cite{poss:tan:zhou:15} to the case where the generator $f$ is only monotone w.r.t. $y$ (see {\bf H2}). Compared with \cite{poss:13}, we do not assume that $f$ is of linear growth w.r.t. $y$ (see {\bf H4}). Nevertheless let us immediately emphasize that we will assume stronger integrability assumptions of $\xi$ and $f^0$ to overcome this difficulty and we change the minimality condition on the non-decreasing process $K$ (see Condition \eqref{eq:minim_cond_K_2BSDE}). 

\subsection{The probabilistic setting} \label{sect:setting}

Our framework is exactly the same as in \cite{poss:tan:zhou:15}. Let us recall the notations and assumptions. Let $\mathbb{N}^*:=\mathbb{N}\setminus\{0\}$ and let $\mathbb{R}_+^*$ be the set of real positive numbers. For every $d-$dimensional vector $b$ with $d\in \mathbb{N}^*$, we denote by $b^{1},\ldots,b^{d}$ its coordinates and for $\alpha,\beta \in \R^d$ we denote by $\alpha\cdot \beta$ the usual inner product, with associated norm $\No{\cdot}$, which we simplify to $|\cdot|$ when $d$ is equal to $1$. We also let ${\bf 1}_d$ be the vector whose coordinates are all equal to $1$. For any $(l,c)\in\mathbb N^*\times\mathbb N^*$, $\mathcal M_{l,c}(\mathbb R)$ will denote the space of $l\times c$ matrices with real entries. Elements of the matrix $M\in\mathcal M_{l,c}$ will be denoted by $(M^{i,j})_{1\leq i\leq l,\ 1\leq j\leq c}$, and the transpose of $M$ will be denoted by $M^\top$. When $l=c$, we let $\mathcal M_{l}(\mathbb R):=\mathcal M_{l,l}(\mathbb R)$. We also identify $\mathcal M_{l,1}(\R)$ and $\R^l$. Let $\Sp$ denote the set of all symmetric positive semi-definite $d\x d$ matrices. We fix a map $\psi : \Sp \longrightarrow \Mc_d(\R)$ which is (Borel) measurable and satisfies $\psi(a) (\psi(a))^\top = a$ for all $a \in \Sp$, and denote $a^{\frac 12} := \psi(a)$.

\subsubsection{Canonical space}

Let $d \in \N^{\ast}$, we denote by $\Om := C\left( \left[ 0,T \right], \mathbb{R}^d \right)$ the canonical space of all $\R^d-$valued continuous paths $\om$ on $[0,T]$ such that $\om_0 = 0$, equipped with the canonical process $X$, {\it i.e.} $X_t(\om) := \om_t,$ for all $\om  \in \Om$. Denote by $\F = (\Fc_t)_{0 \le t \le T}$ the canonical filtration generated by $X$, and by $ \mathbb{F}_+ = ( \mathcal{F}^+_t)_{ 0 \le t \le T }$ the right limit of  $ \mathbb{F} $ with $ \Fc_t^+ \mathrel{\mathop:}= \cap_{ s> t} \Fc_s $ for all $t \in [0, T)$ and $\Fc^+_T := \Fc_T$. We equip $\Om$ with the uniform convergence norm $\No{\om}_{\infty} := \sup_{0 \le t \le T} \No{\om_t}$, so that the Borel $\sigma-$field of $\Om$ coincides with $\Fc_T$. Let $\P_0$ denote the Wiener measure on $\Om$ under which $X$ is a Brownian motion.

\vspace{1mm}
	
Let $\M_1$ denote the collection of all probability measures on $(\Om, \Fc_T)$. Notice that $\M_1$ is a Polish space equipped with the weak convergence topology. We denote by $\Bf$ its Borel $\sigma-$field. Then for any $\P\in \M_1$, denote by $\Fc_t^{\P}$ the completed $\sigma-$field of $\Fc_t$ under $\P$. Denote also the completed filtration by $\F^{\P} = \big(\Fc_t^{\P} \big)_{t \in [0,T]}$ and $\F^{\P}_+$ the right limit of $\F^{\P}$, so that $\F^{\P}_+$ satisfies the usual conditions. Moreover,  for $\Pc \subset \M_1$, we introduce the universally completed filtration $\F^{U} := \big(\Fc^{U}_t \big)_{0 \le t \le T}$, $\F^{\Pc} := \big(\Fc^{\Pc}_t \big)_{0 \le t \le T}$, and $\F^{\Pc+} := \big(\Fc^{\Pc+}_{t} \big)_{0 \le t \le T}$, defined as follows
$$\mathcal F^{U}_t := \bigcap_{\P \in \M_1} ~\Fc_t^{\P},\  \Fc^{\Pc}_t := \bigcap_{\P \in \Pc} ~\Fc_t^{\P},\ t\in[0,T],\  \Fc^{\Pc+}_t := \Fc^{\Pc}_{t+},\ t\in[0,T), \ \text{and}\ \Fc^{\Pc+}_T:=\Fc^{\Pc}_T.$$
We also introduce an enlarged canonical space $\Omb := \Om \x \Om'$, where $\Om'$ is identical to $\Om$. By abuse of notation, we denote by $(X, B)$ its canonical process, {\it i.e.} $X_t(\omb) := \om_t$, $B_t(\omb) := \omega'_t$ for all $\omb := (\om, \omega') \in \Omb$, by $\Fb = (\Fcb_t)_{0 \le t \le T}$ the canonical filtration generated by $(X,B)$, and by $\Fb^X = (\Fcb^X_t)_{0 \le t \le T}$ the filtration generated by $X$. Similarly, we denote the corresponding right-continuous filtrations by $\Fb^X_+$ and $\Fb_+$, and the augmented filtration by $\Fb^{X, \Pb}_+$ and $\Fb^{\Pb}_+$, given a probability measure $\Pb$ on $\Omb$.

\subsubsection{Semi-martingale measures}

We say that a probability measure $\P$ on $(\Om, \Fc_T)$ is a semi-martingale measure if $X$ is a semi-martingale under $\P$. Then on the canonical space $\Om$, there is some $\F-$progressively measurable non-decreasing process (see e.g. Karandikar \cite{kara:95}), denoted by $\crochetX = (\crochetX_t)_{0 \le t \le T}$, which coincides with the quadratic variation of $X$ under each semi-martingale measure $\P$. Denote further
\begin{equation*} 
\ah_t ~:=~ \limsup_{\eps \searrow 0}~ \frac{\crochetX_t - \crochetX_{t-\eps}}{\eps}.
\end{equation*}
	
\vspace{0.5em}
For every $t \in [0, T]$, let $\Pc^W_t$ denote the collection of all probability measures $\P$ on $(\Om, \Fc_T)$ such that
\begin{itemize}
\item $(X_s)_{s \in [t,T]}$ is a $(\P, \F)-$semi-martingale admitting the canonical decomposition (see e.g. \cite[Theorem I.4.18]{jaco:shir:03})
$$X_s=\int_t^sb^\P_rdr + X^{c,\P}_s, \ s\in [t, T], \ \P-a.s., $$ 
where $b^\P$ is a $\F^{\P}-$predictable $\R^d-$valued process, and $X^{c,\P}$ is the continuous local martingale part of $X$ under $\P$.

\item $\big(\crochetX_s \big)_{s \in [t,T]}$ is absolutely continuous in $s$ with respect to the Lebesgue measure, and $\ah$ takes values in $\Sp$, $\P-a.s.$
\end{itemize}
Given a random variable or process $\lambda$ defined on $\Om$, we can naturally define its extension on $\Omb$ (which, abusing notations slightly, we still denote by $\lambda$) by
\begin{equation} \label{eq:extension_def}
\lambda(\omb) := \lambda(\om), ~~\forall \omb = (\om, \om') \in \Omb.
\end{equation}
In particular, the process $\ah$ can be extended on $\Omb$. Given a probability measure $\P \in \Pc^W_t$, we define a probability measure $\Pb$ on the enlarged canonical space $\Omb$ by $\Pb := \P \otimes \P_0$, so that $X$ in $(\Omb, \Fcb_T, \Pb, \Fb)$ is a semi-martingale with  the same triplet of characteristics as $X$ in $(\Om, \Fc_T, \P, \F)$, $B$ is a $\Fb-$Brownian motion, and $X$ is independent of $B$.
Then for every $\P \in \Pc^W_t$, there is some $\R^d-$valued, $\Fb-$Brownian motion $W^{\P} = (W^{\P}_r)_{t \le r \le s}$ such that  (see e.g. Theorem 4.5.2 of \cite{stro:vara:79})
\begin{equation} \label{eq:XW}
X_s = \int_t^s b_r^{\P} dr + \int_t^s \ah^{\frac 12}_r dW_r^{\P}, ~~s \in [t, T], ~~\Pb-a.s.,
\end{equation}
where we extend the definition of $b^{\P}$ and $\ah$ on $\Omb$ as in \eqref{eq:extension_def}, and where we recall that $\widehat a^{\frac 1 2}$ has been defined in the notations above.

\vspace{0.5em}
Notice that when $\ah_r$ is non-degenerate $\P-a.s.$, for all $r \in [t, T]$, then we can construct the Brownian motion $W^{\P}$ on $\Om$ by 
$$W_t^\P:=\int_0^t\widehat a_s^{-\frac 1 2}dX_s^{c,\P},\ t\in[0,T],\ \P-a.s.,$$
and do not need to consider the above enlarged space equipped with an independent Brownian motion to construct $W^{\P}$.
	 
\begin{Remark}[On the choice of $\ah^{\frac{1}{2}}$]
The measurable map $\psi: a \longmapsto a^{\frac 12}$ is fixed throughout the paper. A first choice is to take $a^{\frac 12}$ as the unique non-negative symmetric square root of $a$ $($see e.g. Lemma 5.2.1 of \cite{stro:vara:79}$)$. One can also use the Cholesky decomposition to obtain $a^{\frac 12}$ as a lower triangular matrix. Finally the reader can read \cite{poss:tan:zhou:15}, Remark 2.2 where the sets $\cP(t,\omega)$ are given by the collections of probability measures induced by a family of controlled diffusion processes. In this case  one can take $\ah^{\frac 12}$ in the following way:
\begin{equation} \label{eq:def_a12}
a ~= \left( \begin{array}{cc}
\sigma \sigma^T & \sigma \\
\sigma^T & I_n 
\end{array} \right)
~~\mbox{and}~~
a^{\frac12} ~= \left( \begin{array}{cc}
\sigma & 0 \\
I_n & 0
\end{array} \right),
~~~\mbox{for some}~ \sigma \in \Mc_{m,n}.
\end{equation}
\end{Remark} 
	
\subsubsection{Conditioning and concatenation of probability measures}
	
We also recall that for every probability measure $\P$ on $\Om$ and $\F-$stopping time $\tau$ taking value in $[0,T]$, there exists a family of regular conditional probability distribution (r.c.p.d. for short) $(\P^{\tau}_{\om})_{\om \in \Om}$ (see e.g. Stroock and Varadhan \cite{stro:vara:79}), satisfying:

\begin{itemize}
\item[(i)] For every $\om \in \Om$, $\mathbb{P}^{\tau}_{\om}$ is a probability measure on $(\Om,  \Fc_T)$.

\vspace{0.5em}
\item[(ii)] For every $ E \in \mathcal{F}_T $, the mapping $ \om \longmapsto \mathbb{P}^{\tau}_{\om}(E) $ is $\mathcal{F}_\tau-$measurable.

\vspace{0.5em}
\item[(iii)] The family $ (\mathbb{P}^{\tau}_{\om})_{\om \in \Om} $ is a version of the conditional probability measure of $ \mathbb{P} $ on $ \mathcal{F}_{\tau}$, {\it i.e.}, for every integrable $ \mathcal{F}_T-$measurable random variable $ \xi $ we have $ \mathbb{E}^{\mathbb{P} } [ \xi | \mathcal{F}_{\tau}](\omega)=\mathbb{E}^{ \mathbb{P}^{\tau}_{\omega}} \big[\xi \big],$ for $\P-a.e.$ $\om\in\Om$.

\vspace{0.5em}
\item[(iv)] For every $\om \in \Om$, $ \mathbb{P}^{\tau}_{\om} (\Omega_{\tau}^{\om})=1$, where $\Omega_{\tau}^{\om}\mathrel{\mathop:}= \big \lbrace \overline\omega \in \Omega : \ \overline\omega(s)=\om(s), \ 0\le s \le \tau(\om) \big \rbrace.$
\end{itemize}

\vspace{0.5em}
Furthermore, given some $\P$ and a family $(\Q_{\om})_{\om \in \Om}$ such that $\om \longmapsto \Q_{\om}$ is $\Fc_{\tau}-$measurable and $\Q_{\om} (\Om_{\tau}^{\om}) = 1$ for all $\om \in \Om$, one can then define a concatenated probability measure $\P \otimes_{\tau} \Q_{\cdot}$ by
\begin{equation*}
\P \otimes_{\tau} \Q_{\cdot} \big[ A \big]:=\int_{\Om} \Q_{\om} \big[A \big] ~\P(d \om), \ \forall A \in \Fc_T.
\end{equation*}
	
\subsubsection{Hypotheses on $\Pc(t,\om)$}

We are given a family $\cP=\left (\Pc(t,\om) \right)_{(t,\om) \in [0,T] \x \Om}$ of sets of probability measures on $(\Om, \Fc_T)$, where $\Pc(t,\om) \subset \Pc^W_t$ for all $(t,\om) \in [0,T] \x \Om$. Denote also $\Pc_t := \cup_{\om \in \Om} \Pc(t,\om)$. We make the following assumption on the family $\left (\Pc(t,\om) \right)_{(t,\om) \in [0,T] \x \Om}$.
\begin{Assumption} \label{assum:main}
	
\vspace{0.5em}

$\mathrm{(i)}$ For every $(t,\om) \in [0,T] \x \Om$, one has $\Pc(t, \om) = \Pc(t, \om_{\cdot \wedge t})$ and $\P(\Om_t^{\om}) = 1$ whenever $\P \in \Pc(t, \om)$. The graph $[[ \Pc ]]$ of $\Pc$, defined by $[[ \Pc ]] := \{ (t, \om, \P): \P \in \Pc(t,\om) \}$, is upper semi-analytic in $[0,T] \x \Om \x \M_1$.

\vspace{0.5em}	
$\mathrm{(ii)}$ $\Pc$ is stable under conditioning, {\it i.e.} for every $(t,\om) \in [0,T] \x \Om$ and every $\P \in \Pc(t,\om)$ together with an $\F-$stopping time $\tau$ taking values in $[t, T]$, there is a family of r.c.p.d. $(\P_{\w})_{\w \in \Om}$ such that $\P_{\w}$ belongs to $\Pc(\tau(\w), \w)$, for $\P-a.e.$ $\w \in \Om$.

\vspace{0.5em}	
$\mathrm{(iii)}$ $\Pc$ is stable under concatenation, {\it i.e.} for every $(t,\om) \in [0,T] \x \Om$ and $\P \in \Pc(t,\om)$ together with a $\F-$stopping time $\tau$ taking values in $[t, T]$, let $(\Q_{\w})_{\w \in \Om}$ be a family of probability measures such that $\Q_{\w} \in \Pc(\tau(\w), \w)$ for all $\w \in \Om$ and $\w \longmapsto \Q_{\w}$ is $\Fc_{\tau}-$measurable, then the concatenated probability measure $\P \ox_{\tau} \Q_{\cdot} \in \Pc(t,\om)$.

\end{Assumption}

We notice that for $t = 0$, we have $\Pc_0 := \Pc(0, \om)$ for any $\om \in \Om$.

\subsubsection{Spaces and norms} \label{subsec:space_norms}

We now give the spaces and norms which will be needed in the rest of the paper. Fix some $t\in[0,T]$ and some $\omega\in\Omega$. In what follows, $\bG:=(\cG_s)_{t\leq s\leq T}$ will denote an arbitrary filtration on $(\Om,\Fc_T)$, and $\P$ an arbitrary element in $\Pc(t,\omega)$. Denote also by $\bG^{\P}$ the $\P-$augmented filtration associated to $\bG$.

\vspace{0.5em}

For $p \geq  1$, $\L^{p}_{t,\omega}(\bG)$ (resp. $\L^p_{t,\omega}(\bG,\P)$) denotes the space of all $\mathcal G_T-$measurable scalar random variable $\xi$ with
$$\No{\xi}_{\L^{p}_{t,\omega}}^p:=\underset{\mathbb{P} \in \mathcal{P}(t,\omega)}{\sup}\mathbb E^{\mathbb P}\left[|\xi|^p\right]<+\infty,\ \left(\text{resp. }\No{\xi}_{\L^{p}_{t,\omega}(\P)}^p:=\mathbb E^{\mathbb P}\left[|\xi|^p\right]<+\infty\right).$$

$\mathbb H^{p}_{t,\omega}(\bG)$ (resp. $\mathbb H^p_{t,\omega}(\bG,\P)$) denotes the space of all $\bG-$predictable $\mathbb R^d-$valued processes $Z$, which are defined $\widehat a_s ds-a.e.$ on $[t,T]$, with
\begin{align*}
&\No{Z}_{\mathbb H^{p}_{t,\omega}}^p:=\underset{\mathbb{P} \in \mathcal{P}(t,\omega)}{\sup}\mathbb E^{\mathbb P}\left[\left(\int_t^T\No{\widehat a_s^{\frac 1 2}Z_s}^2ds\right)^{\frac p2}\right]<+\infty, \\
& \left(\text{resp. } \No{Z}_{\mathbb H^{p}_{t,\omega}(\P)}^p:=\mathbb E^{\mathbb P}\left[\left(\int_t^T\No{\widehat a_s^{\frac 12}Z_s}^2ds\right)^{\frac p2}\right]<+\infty\right).
\end{align*}

$\mathbb M^p_{t,\omega}(\bG,\P)$ denotes the space of all $(\bG,\P)-$optional martingales $M$ with $\P-a.s.$ c\`adl\`ag paths on $[t,T]$, with $M_t=0$, $\P-a.s.$, and 
$$\No{M}_{\mathbb M^p_{t,\omega}(\P)}^p:=\E^\P\left[\left[M\right]_T^{\frac p2}\right]<+\infty.$$
Furthermore, we say that a family $(M^\P)_{\P\in\mathcal P(t,\omega)}$ belongs to $\mathbb M^p_{t,\omega}((\bG^\P)_{\P\in\Pc(t,\omega)})$ if, for any $\P\in\mathbb P(t,\omega)$, $M^\P\in\mathbb M^p_{t,\omega}(\bG^\P,\P)$ and 
$$\underset{\mathbb P\in\mathcal P(t,\omega)}{\sup}\No{M^\P}_{\mathbb M^p_{t,\omega}(\P)}<+\infty.$$

$\mathbb I^p_{t,\omega}(\bG,\P)$ 
denotes the space of all $\bG-$predictable 
processes $K$ with $\P-a.s.$ c\`adl\`ag and non-decreasing paths on $[t,T]$, with $K_t=0$, $\P-a.s.$, and
$$\No{K}_{\mathbb I^p_{t,\omega}(\P)}^p:=\E^\P\left[K_T^{p}\right]<+\infty.
~~
$$
We will say that a family $(K^\P)_{\P\in\mathcal P(t,\omega)}$ belongs to $\mathbb I^{p}_{t,\omega}((\bG^\P)_{\P\in\Pc(t,\omega)})$ 
if, for any $\P\in\mathcal P(t,\omega)$, $K^\P\in\mathbb I^{p}_{t,\omega}(\bG_\P,\P)$ 
and 
$$
\underset{\mathbb P\in\mathcal P(t,\omega)}{\sup}\No{K^\P}_{\mathbb I^p_{t,\omega}(\P)}<+\infty.
$$

$\mathbb D^{p}_{t,\omega}(\bG)$ (resp. $\mathbb D^p_{t,\omega}(\bG,\P)$) denotes the space of all $\bG-$progressively measurable $\mathbb R-$valued processes $Y$ with $\mathcal P(t,\omega)-q.s.$ (resp. $\P-a.s.$) c\`adl\`ag paths on $[t,T]$, with
$$\No{Y}_{\mathbb D^{p}_{t,\omega}}^p:=\underset{\mathbb{P} \in \mathcal{P}(t,\omega)}{\sup}\mathbb E^{\mathbb P}\left[\underset{t\leq s\leq T}{\sup}|Y_s|^p\right]<+\infty, \ \left(\text{resp. }\No{Y}_{\mathbb D_{t,\omega}^{p}(\P)}^p:=\mathbb E^{\mathbb P}\left[\underset{t\leq s\leq T}{\sup}|Y_s|^p\right]<+\infty\right).$$

For each $\xi \in \L^{1}_{t,\omega}(\bG)$ and $s \in [t,T]$ denote
$$\mathbb E_s^{\mathbb P,t,\omega,\bG}[\xi]:=\underset{\mathbb P^{'}\in \mathcal P_{t,\omega}(s, \mathbb P,\bG)}{\esup^{\mathbb P}}\mathbb E^{\mathbb P^{'}}[\xi|\cG_s] \text{ where } \mathcal P_{t,\omega}(s,\mathbb P,\bG):=\left\{\mathbb P^{'}\in\mathcal P(t,\omega),\ \mathbb P^{'}=\mathbb P \text{ on }\cG_s\right\}.$$
Then we define for each $p\geq \kappa\geq 1$,
$$\mathbb L_{t,\omega}^{p,\kappa}(\bG):=\left\{\xi\in \bL_{t,\omega}^{p}(\bG),\ \No{\xi}_{\mathbb L_{t,\omega}^{p,\kappa}}<+\infty\right\},$$
where
$$\No{\xi}_{\mathbb L_{t,\omega}^{p,\kappa}}^p:=\underset{\mathbb P\in\mathcal P(t,\omega)}{\sup}\mathbb E^{\mathbb P}\left[\underset{t\leq s\leq T}{\esup}^{\mathbb P}\left(\mathbb E_s^{\mathbb P,t,\omega,\F^+}[|\xi|^\kappa]\right)^{\frac{p}{\kappa}}\right].$$

Similarly, given a probability measure $\Pb$ and a filtration $\overline \bG$ on the enlarged canonical space $\Omb$, we denote the corresponding spaces by
$\mathbb D^p_{t,\omega}(\overline \bG,\Pb)$,
$\mathbb H^p_{t,\omega}(\overline \bG,\Pb)$,
$\mathbb M^p_{t,\omega}(\overline \bG,\Pb)$,
... Furthermore, when $t=0$, there is no longer any dependence on $\omega$, since $\omega_0=0$, so that we simplify the notations by suppressing the $\omega-$dependence and write $\H^p_0(\bG)$, $\H^p_0(\bG,\P)$, ... Similar notations are used on the enlarged canonical space.

When there is no ambiguity (only one probability measure $\bP$, see Appendix), the Brownian motion will be denoted by $W$ and for simplicity in the notations of integrability spaces, we remove the reference to the filtration $\bF$, the probability measure and $\omega$: $\mathbb D^p_{0,\omega}(\bF,\bP) = \bD^p$ and with the same convention $\H^p$, $\bM^p$ and $\bI^p$. Moreover for $\alpha \in \bR$, for $(Z,M,K) \in \bH^p \times \bM^p \times \bI^p$, we define
\begin{eqnarray*}
\|Z\|_{\bH^{p,\alpha}}^p & = & \E \left[ \left( \int_0^T e^{\alpha s} \|Z_s\|^2 ds \right)^{p/2} \right], \\
\|M\|_{\bM^{p,\alpha}}^p & = & \E \left[ \left( \int_0^T e^{\alpha s} d[M]_s \right)^{p/2} \right] \\
\|K\|_{\bI^{p,\alpha}}^p & = & \E \left[ \left( \int_0^T e^{\alpha s /2} dK_s \right)^{p} \right].
\end{eqnarray*}

\subsection{Assumptions on $f$ and $\xi$}

We shall consider a $\Fc_T-$measurable random variable $\xi : \Om \longrightarrow \R$ and a generator function 
\begin{equation*}
f: (t, \om, y, z, a, b)  \in [0,T] \x \Om \x \R \x \R^d \x \Sp \x \R^d \longrightarrow  \R.
\end{equation*}
Define for simplicity
$$\widehat{f}^\P_s(y,z):= f(s, X_{\cdot\wedge s},y,z,\widehat{a}_s, b^\P_s) ~~\mbox{and}~~ \widehat{f}^{\P,0}_s:= f(s, X_{\cdot\wedge s},0,0,\widehat{a}_s, b^\P_s).$$
The generator function $f$ is jointly Borel measurable and: 
\begin{enumerate}
\item[{\bf H1.}] For any $(t,\omega,z,a,b)$, the map $y \mapsto f(t,\omega,y,z,a,b)$ is continuous.
\item[{\bf H2.}] $f$ satisfies the monotonicity assumption w.r.t. $y$: there exists a constant $L_1 \in \bR$ such that for every $(t,\omega,y,y',z,a,b)$
$$(f(t,\omega,y,z,a,b)-f(t,\omega,y',z,a,b))(y-y') \leq L_1 (y-y')^2.$$
\item[{\bf H3.}] $f$ is Lipschitz continuous w.r.t. $z$ uniformly w.r.t. all other parameters, that is there exists a non-negative constant $L_2$ such that for every $(t,\omega,y,z,z',a,b)$, 
$$|f(t,\omega,y,z,a,b) - f(t,\omega,y,z',a,b)| \leq L_2 \|z-z'\|.$$
\item[{\bf H4.}] The following growth assumption w.r.t. $y$ holds: there exists $\sq > 1$ and a jointly Borel measurable function $\Psi : [0,T] \times \Om \x \Sp  \to \R $ such that for any $(t,\omega,a,b,y)$
$$|f(t,\omega,y,0,a,b) -f^{0}_t| \leq \Psi(t,\omega,a)(1+ |y|^\sq).$$
\end{enumerate}
$f^0_t$ is the notation for $f(t,\omega,0,0,a,b)$. We say that $f$ satisfies {\bf Condition (H)} if {\bf H1} to {\bf H4} hold. As for the generator, we denote
$$\widehat{\Psi}_s:= \Psi(s, X_{\cdot\wedge s},\widehat{a}_s).$$
Finally on $\xi$, $f^0$ and $\Psi$ we impose:
\begin{enumerate}
\item[{\bf C1.}] For some fixed constants $\varrho > 1$ and $\bsp > \varrho/(\varrho-1)$, one has for every $(t,\omega) \in[0,T] \x \Omega$,
\begin{equation*} 
\sup_{\P \in \Pc(t,\omega)}  \E^{\P} \left[  |\xi|^{\bsp \sq} + \int_t^T  \big| \widehat{f}^{\P,0}_s \big|^{\bsp \sq}  ds +\int_t^T  \big| \widehat{\Psi}_s \big|^{\varrho}  ds \right]  < + \infty.
\end{equation*}
\end{enumerate}
\begin{enumerate}
\item[{\bf C2.}] 
There is some $\kappa \in (1,\bsp \sq ]$ such that $\xi \in \bL^{\bsp \sq,\kappa}_{0}$ and 
$$\phi^{\bsp \sq ,\kappa}_f = \sup_{\bP \in \cP_0}\bE^\bP \left[ \underset{0\leq s\leq T}{\esssup}^\bP \left( \underset{\mathbb P^{'}\in \mathcal P_{0}(s, \mathbb P,\bF_+)}{\esup^{\mathbb P}}\mathbb E^{\mathbb P^{'}} \left[ \int_0^T |\widehat f^{\bP',0}_t|^\kappa dt \bigg| \cF_s^+\right] \right)^{\frac{\bsp \sq}{\kappa}} \right] < +\infty. $$
\end{enumerate}
{\bf Notation.} In the rest of the paper, $p$ denotes any number larger than 1: $p> 1$; $\sq$ denotes the exponent in Condition {\bf H4} (or {\bf H4'} in the appendix); $\bsp$ and $\varrho$ are used in Assumptions {\bf C1} and {\bf C2} and satisfy $\bsp > \varrho/(\varrho-1)$ ($\bsp$ is greater than the H\"older conjugate of $\varrho$). Finally we will sometimes assume $p$ verifies 
\begin{equation}\label{eq:cond_int}
1 < p \leq \frac{\varrho \bsp}{\varrho + \bsp} < \bsp.
\end{equation}
Under this condition, $\widehat p = \dfrac{p\bsp}{(\bsp - p)}\leq \varrho$.

\begin{Remark}[On condition {\bf H2}] \label{rem:H2_L1_equal_0}
It is well-known that we can suppose w.l.o.g.\ that $L_1=0$. Indeed if $(y,z,m)$ is a solution of \eqref{eq:general_BSDE_P} below, then $(\bar y,\bar z,\bar m)$ with
$$\bar y_t = e^{L_1 t} y_t, \quad \bar z_t = e^{L_1 t} z_t, \quad d \bar m_t = e^{L_1 t} dm_t $$
satisfies an analogous BSDE with terminal condition $\bar \xi = e^{L_1 T}\xi$ and generator
$$\bar f(t,y,z) = e^{L_1 t} f(t,e^{-L_1 t} y, e^{-L_1 t}z) - L_1 y.$$
$\bar f$ satisfies assumptions ${\bf (H)}$ with $L_1 = 0$. If we consider a super-solution of a BSDE (see Equation \eqref{eq:supersolution_general_BSDE}), the non-decreasing $k$ is replaced by $d\bar k_t = e^{L_1 t} dk_t$. 
Hence in the rest of this paper, we will sometimes assume w.l.o.g. that $L_1 = 0$.
\end{Remark}
\begin{Remark}[On condition {\bf H4}]
Let us explain why we assume Condition {\bf H4} together with the integrability condition {\bf C1} on $\widehat{\Psi}$, and not some weaker growth condition (as in \cite{pard:99} or \cite{bria:dely:hu:03} for standard BSDE). Indeed to prove the existence of a solution for a 2BSDE we will need that the solution $(y,z,m)$ of the standard BSDE with data $\widehat f^\bP$ and $\xi$ (see Equation \eqref{eq:general_BSDE_P}) is obtained by approximation with a sequence of solutions $(y^{n},z^{n},m^{n})$ of Lipschitz BSDEs (see Lemma \ref{lem:L2_approx} in the Appendix). Moreover the fact that $\Psi$ does not depend on $b$ is used for regularization of the paths in order to control the downcrossings (see Section \ref{sssect:path_regul}). Finally notice that this setting is sufficient to solve our optimal control problem \eqref{eq:control_pb_intro}. Existence under weaker conditions is left for further research. 
\end{Remark}

\begin{Remark}[On condition {\bf C1} on $\xi$ and $\widehat{f}^{\P,0}$]
Compared to the integrability assumption imposed in \cite{bria:dely:hu:03} for example, {\bf C1} looks to be too strong. Note again that it is sufficient to solve our control problem. As in the previous remark this hypothesis is related to the method we use to obtain existence of the solution of the 2BSDE. In particular in the Lipschitz approximation procedure and in the proof of existence of the solution of the reflected BSDE (see Section \ref{sect:mono_RBSDE} in Appendix). Weaker integrability condition is also left for further research. 
\end{Remark}

\subsection{Definition, uniqueness and properties}

We consider the 2BSDE
\begin{equation}  \label{eq:_general_2BSDE}
Y_t = \xi +  \int_t^T \widehat f^\bP_u(Y_u,\widehat a_u^{\frac{1}{2}}Z_u) du -\left( \int_t^T Z_u dX^{c,\bP}_u \right)^{\bP} - \int_t^T dM^\bP_u + \int_t^T dK^\bP_u.
\end{equation}
In this equation $\left( \int_t^T Z_u dX^{c,\bP}_u \right)^{\bP}$ denotes the stochastic integral of $Z$ w.r.t. $X^{c,\bP}$ under $\bP$, $M^\bP$ is a martingale orthogonal to $X^{c,\bP}$ and $K^\bP$ is a  non-decreasing process.

In this part we want to obtain the same result as \cite[Theorem 4.1]{poss:tan:zhou:15} for a monotone generator. The difference is that our generator is not Lipschitz continuous w.r.t. $y$. Here we follow the arguments developed in \cite{poss:tan:zhou:15} and we check that all the results contained in \cite{poss:tan:zhou:15} still hold in our setting. In other words we explain how their results can be extended under {\bf H2} and {\bf H4}. When the Lipschitz condition is not used, we simply refer to their paper.

\begin{Def} \label{def:sol_2BSDE}
$(Y,Z,M^\bP,K^\bP)$ is a solution if \eqref{eq:_general_2BSDE} is satisfied $\cP-q.s.$ and if the family $(K^\bP,\ \bP \in \cP)$ satisfies the {\it minimality condition:}
\begin{equation} \label{eq:minim_cond_K_2BSDE}
\underset{\bP' \in \cP(t,\bP,\bF_+)}{\essinf^\bP} \bE^{\bP'} \left[ \int_t^T \exp \left( \int_{t}^s \lambda_u^{\bP'} du \right)dK^{\bP'}_s \bigg|  \cF_t^+ \right] = 0, \quad 0 \leq t \leq T, \ \bP-a.s., \ \forall \bP \in \cP
\end{equation}
where
$$\lambda_s^{\bP'} = \frac{\widehat f^{\bP'}_s(Y_s,z^{\bP'}_s) - \widehat f^{\bP'}_s(y^{\bP'}_s,z^{\bP'}_s)}{Y_s-y^{\bP'}_s} \1_{Y_s\neq y^{\bP'}_s} \leq L_1.$$
\end{Def}
$\cP$-q.s.\! means quasi-surely, that is $\bP-a.s.$ for any $\bP \in \cP$.
In the above definition and in the rest of this section, $(y^\bP,z^\bP,m^\bP)$ is the solution under the probability measure $\bP$ of the following BSDE
\begin{equation}  \label{eq:general_BSDE_P}
y_t = \xi +  \int_t^T f(u, X_{\cdot\wedge u},y_u,\widehat a_u^{\frac{1}{2}}z_u,\widehat{a}_u, b^\P_u) du  -\left( \int_t^T z_u dX^{c,\bP}_u \right)^{\bP} - \int_t^T dm_u, \ \bP-a.s.
\end{equation}
where again $m$ is an additional martingale, orthogonal to $X^{c,\bP}$. Moreover for $t \leq s$ and a $\cF^+_s$-measurable random variable $\zeta$ , $y_{t}^{\bP}(s,\zeta)$ is the solution of \eqref{eq:general_BSDE_P} with terminal time $s$ and terminal condition $\zeta$.
\begin{Remark}[Notation for solution]
In the rest of this paper, $(Y,Z,M,K)$ is a solution of a 2BSDE, whereas $(y,z,m)$ denotes a solution of a standard BSDE and $(\widetilde y, \widetilde z, \widetilde m, \widetilde k)$ stands for the solution of a reflected BSDE. If necessary, the dependence w.r.t. the probability measure will be added as a superscript ($y^\bP$, $M^\bP$, ...). $y(\tau,\zeta)$ always denotes the first component of the solution of a BSDE with terminal time $\tau$ and terminal condition $\zeta$, where $\tau$ is a $\bF^+$-stopping time and $\zeta$ is $\cF^+_\tau$-measurable.
\end{Remark}

\begin{Remark}
The BSDE \eqref{eq:general_BSDE_P} is defined on $(\Omega,\cF^\bP_T,\bP)$ w.r.t. the filtration $\bF^\bP_+$ and is equivalent to the BSDE on $(\overline{\Omega},\overline{\cF}^{X}_T ,\overline{\bP})$ w.r.t the filtration $\overline{\bF}^{X,\overline{\bP}}_+$:
\begin{equation}  \label{eq:general_BSDE_enlarged_P}
\bar y_t = \xi(X_.) +  \int_t^T \widehat f^\bP_u(\bar y_u,\widehat a_u^{\frac{1}{2}}\bar z_u) du  -\left( \int_t^T \bar z_u dX^{c,\bP}_u \right)^{\overline{\bP}} - \int_t^T d\bar m_u, \quad \overline{\bP}-a.s.
\end{equation}
Moreover on the enlarged space $(\overline{\Omega},\overline{\cF}_T ,\overline{\bP})$, with the filtration $\overline{\bF}_+$, one defines the BSDE 
\begin{equation}  \label{eq:general_BSDE_enlarged_P_bis}
\widetilde y_t = \xi(X_.) +  \int_t^T \widehat f^\bP_u(\widetilde y_u,\widehat a_u^{\frac{1}{2}}\widetilde z_u) du  -\left( \int_t^T \widetilde z_u \widehat a^{\frac{1}{2}}_u dW^\bP_u \right)^{\overline{\bP}} - \int_t^T d\widetilde m_u, \quad \overline{\bP}-a.s.
\end{equation}
The key point is contained in \cite{poss:tan:zhou:15}, Lemma 2.2, where ``equivalence'' between the three BSDEs is proved and with straightforward modifications in the proof, this result holds under our conditions {\bf (H)} and {\bf C1}. 
\end{Remark}

\vspace{0.5cm}
Let us begin with the uniqueness result, which corresponds to \cite[Theorem 4.2]{poss:tan:zhou:15}.
\begin{Prop} \label{prop:uniq_2BSDE}
Under Conditions {\bf (H)}, {\bf C1} and {\bf C2}, let $(Y,Z,M^\bP,K^\bP)$ be a solution of \eqref{eq:_general_2BSDE} and for any $\bP \in \cP$, let $(y^\bP,z^\bP,m^\bP)$ be the solution of the BSDE \eqref{eq:general_BSDE_P} in $ \bD^{\bsp \sq}_0(\bF^{\bP}_+,\bP) \times \bH^{\bsp \sq}_0(\bF^{\bP}_+,\bP) \times \bM^{\bsp \sq}_0(\bF^{\bP}_+,\bP)$. Then for any $0\leq t_1\leq t_2 \leq T$
\begin{equation}\label{eq:representation_formula_2BSDE}
Y_{t_1} = \underset{\bP' \in \cP(t_1,\bP,\bF_+)}{\esssup^\bP} y_{t_1}^{\bP'}(t_2,Y_{t_2}).
\end{equation}
Thus uniqueness holds in $ \bD^{\bsp \sq}_0(\bF^{\cP_0}_+) \times \bH^{p}_0(\bF^{\cP_0}_+) \times \bM^{p}_0((\bF^{\bP}_+)_{\bP\in \cP_0}) \times \bI^{p}_0((\bF^{\bP}_+)_{\bP\in \cP_0})$ for any $1 < p$ satisfying Condition \eqref{eq:cond_int}.
\end{Prop}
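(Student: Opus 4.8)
The plan is to first establish the representation \eqref{eq:representation_formula_2BSDE}, from which uniqueness is essentially a corollary. Taking $t_2=T$ and using $Y_T=\xi$, the formula reads $Y_{t_1}=\esssup^\bP_{\bP'\in\cP(t_1,\bP,\bF_+)}y_{t_1}^{\bP'}(T,\xi)$; the right-hand side depends only on the data $(\xi,f)$ and not on the particular solution, so $Y$ is uniquely determined $\cP$-q.s. Once $Y$ is fixed, under each $\bP\in\cP$ it is a fixed $\bP$-semi-martingale, and the uniqueness of its canonical decomposition together with the orthogonal splitting of its continuous martingale part along $X^{c,\bP}$ identifies $Z$ (through the $\widehat a_s ds$-a.e. equality of brackets) and then $M^\bP$; the remaining finite-variation part fixes $K^\bP$. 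Their membership in the spaces of the statement for any $p$ satisfying \eqref{eq:cond_int} then follows from the a priori estimates for \eqref{eq:general_BSDE_P} available under {\bf C1}--{\bf C2}.

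It remains to prove \eqref{eq:representation_formula_2BSDE} by two inequalities. For ``$\geq$'', I would fix $\bP'\in\cP(t_1,\bP,\bF_+)$ and read \eqref{eq:_general_2BSDE} on $[t_1,t_2]$ under $\bP'$ with terminal value $Y_{t_2}$: since $K^{\bP'}$ is non-decreasing, $Y$ is a supersolution of \eqref{eq:general_BSDE_P} on $[t_1,t_2]$. The comparison principle for BSDEs with a monotone generator (guaranteed by {\bf H2}--{\bf H3} together with the results recalled in the Appendix) gives $Y_{t_1}\geq y_{t_1}^{\bP'}(t_2,Y_{t_2})$, $\bP'$-a.s.; as $\bP'=\bP$ on $\cF_{t_1}^+$ this holds $\bP$-a.s., and passing to the essential supremum over $\bP'$ yields the inequality.

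For ``$\leq$'' I would linearise $\delta Y:=Y-y^{\bP'}$ and $\delta Z:=Z-z^{\bP'}$. Monotonicity {\bf H2} furnishes the drift coefficient $\lambda^{\bP'}\leq L_1$ of Definition \ref{def:sol_2BSDE}, while the Lipschitz property {\bf H3} in $z$ furnishes a coefficient bounded by $L_2$, which I would absorb by a Girsanov change of measure $\bQ'\sim\bP'$. Introducing the integrating factor $\exp(\int_{t_1}^\cdot\lambda_u^{\bP'}du)$ and taking $\bE^{\bQ'}[\,\cdot\mid\cF_{t_1}^+]$, so that the stochastic integral against $X^{c,\bP'}$ and the orthogonal martingale $M^{\bP'}-m^{\bP'}$ both have zero conditional expectation (the latter because the change of measure only tilts the $X^{c,\bP'}$-direction), leads, since $\delta Y_{t_2}=0$, to
$$Y_{t_1}-y_{t_1}^{\bP'}(t_2,Y_{t_2})=\bE^{\bQ'}\left[\int_{t_1}^{t_2}\exp\left(\int_{t_1}^s\lambda_u^{\bP'}du\right)dK^{\bP'}_s\,\bigg|\,\cF_{t_1}^+\right]\geq 0.$$
To conclude I would show that the essential infimum over $\bP'$ of the right-hand side vanishes: the minimality condition \eqref{eq:minim_cond_K_2BSDE} supplies a sequence $\bP'_n$ along which $\bE^{\bP'_n}[\int_{t_1}^{T}\exp(\int_{t_1}^s\lambda_u^{\bP'_n}du)dK^{\bP'_n}_s\mid\cF_{t_1}^+]\to 0$, and since the integrand is non-negative the same holds with $T$ replaced by $t_2$.

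The main obstacle is precisely this last passage, and it is where the monotone setting departs from the Lipschitz one of \cite{poss:tan:zhou:15}. Two points must be handled. First, $\lambda^{\bP'}$ is bounded only from above, so the weight $\exp(\int\lambda^{\bP'})$ may be arbitrarily small and cannot be dropped; this is exactly why the minimality condition \eqref{eq:minim_cond_K_2BSDE} carries the exponential weight rather than the classical form \eqref{eq:class_minim_cond_K}, and matching the weight produced by the linearisation with the one in \eqref{eq:minim_cond_K_2BSDE} is the delicate bookkeeping. Second, the identity above is written under $\bQ'$ whereas the minimality condition is expressed under $\bP'$; bridging the two requires transferring an $L^1$ convergence through the density $d\bQ'/d\bP'$, which one controls by combining a uniform higher-integrability bound on $\int_{t_1}^{t_2}\exp(\int\lambda^{\bP'})dK^{\bP'}$, itself a consequence of the strengthened conditions {\bf C1}--{\bf C2} and the attendant a priori estimates in $\bD^{\bsp\sq}_0\times\bH^{\bsp\sq}_0\times\bM^{\bsp\sq}_0$, with an interpolation argument and H\"older's inequality. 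These uniform estimates replace the boundedness arguments that are available for free under a Lipschitz driver.
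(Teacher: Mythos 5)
Your proposal follows essentially the same route as the paper: comparison for the lower bound, then linearisation with the weight $\exp(\int_{t_1}^\cdot\lambda_u^{\bP'}du)$ and a Girsanov-type density, an a priori $p$-th moment bound on $K^{\bP'}_{t_2}-K^{\bP'}_{t_1}$ (obtained from the 2BSDE itself under {\bf C1}--{\bf C2}), H\"older/interpolation to isolate the term $\bE^{\bP'}[\int_{t_1}^{t_2}\Lambda^{\bP'}_s dK^{\bP'}_s\mid\cF_{t_1}^+]$, and the weighted minimality condition \eqref{eq:minim_cond_K_2BSDE} to conclude. The only (cosmetic) difference is that the paper keeps the density $\Delta^{\bP'}$ explicit under $\bP'\otimes\bP_0$ and splits it off by H\"older rather than actually changing measure to $\bQ'$, but the two correctly identified obstacles and their resolutions coincide with the paper's.
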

\begin{proof}
In \cite{poss:tan:zhou:15}, the proof is divided in three steps. There is no modification in the first one. By comparison:
$$Y_{t_1} \geq \underset{\bP' \in \cP(t_1,\bP,\bF_+)}{\esssup^\bP} y_{t_1}^{\bP'}(t_2,Y_{t_2}), \quad \bP-a.s.$$
For the second step we have almost the same estimate on $K^{\bP'}$. For $p> 1$ satisfying \eqref{eq:cond_int}
\begin{eqnarray*}
(K^{\bP'}_{t_2} - K^{\bP'}_{t_1})^p & \leq & C \left[  \sup_{t_1\leq t \leq t_2} |Y_t|^{p} + \left( \int_{t_1}^{t_2} |\widehat f^{\bP',0}_s| ds \right)^p +  \left( \int_{t_1}^{t_2} |\widehat a_s^{\frac{1}{2}}Z_s| ds \right)^p \right] \\
& + & C \left[  \left( \int_{t_1}^{t_2} \widehat \Psi_s (1+|Y_s|^\sq) ds \right)^p  + \left| \int_{t_1}^{t_2} Z_s dX^{c,\bP'}_s \right|^p +  \left| \int_{t_1}^{t_2}  dM^{\bP'}_s \right|^p  \right] \\
& \leq  & C \left[  \sup_{t_1\leq t \leq t_2} |Y_t|^{p} + \left( \int_{t_1}^{t_2} |\widehat f^{\bP',0}_s| ds \right)^p +  \left( \int_{t_1}^{t_2} |\widehat a_s^{\frac{1}{2}}Z_s| ds \right)^p \right] \\
& + & C  \left[ \left| \int_{t_1}^{t_2} Z_s dX^{c,\bP'}_s \right|^p +  \left| \int_{t_1}^{t_2}  dM^{\bP'}_s \right|^p  \right] \\
& + & C \left[  \left( \int_{t_1}^{t_2} (\widehat \Psi_s)^p ds  \right) \left( 1 + \sup_{t_1 \leq t \leq t_2} |Y_s|^{p \sq } \right) \right].
\end{eqnarray*}
Hence 
\begin{eqnarray*}
\sup_{\bP'\in \cP(t_1,\bP,\bF_+)} \bE^{\bP'} \left[ (K^{\bP'}_{t_2} - K^{\bP'}_{t_1})^p  \right] & \leq & C \left[ \phi_f^{p,\kappa} + \|Y\|^{p}_{\bD_0^{p}}   + \|Z\|^p_{\bH_0^p} + \sup_{\bP\in \cP_0} \bE^\bP \left(\left[ M^\bP \right]_T \right)^{p/2} \right]\\
& +& C \left[ \|\widehat \Psi\|^p_{\bL^{\widehat p}} \left(1 + \|Y\|^{\bsp \sq}_{\bD_0^{\bsp \sq}} \right) \right]
\end{eqnarray*}
for $\widehat p = \frac{p\bsp}{(\bsp - p)}\leq \varrho$. The rest of this step does not change (upward directed family, see also Theorem 4.4 in \cite{sone:touz:zhan:12}) and thus 
$$C_{t_1}^{\bP} = \underset{\bP' \in \cP(t_1,\bP,\bF_+)}{\esssup^\bP} \bE^{\bP'} \left[ (K^{\bP'}_{t_2} - K^{\bP'}_{t_1})^p \bigg| \cF^+_{t_1} \right] < +\infty, \quad \bP-a.s.$$
The third step can be followed almost exactly. We define for $t \geq t_1$
$$\Lambda_{t}^{\bP'} =  \exp \left( \int_{t_1}^t \lambda_u^{\bP'} du \right) , \quad \Delta_{t}^{\bP'} = \exp\left[ -\int_{t_1}^t \eta_s^{\bP'} dW_s^{\bP'} - \frac{1}{2} \int_{t_1}^t \|\eta_s^{\bP'} \|^2 ds \right].$$
Estimate (4.6) in \cite{poss:tan:zhou:15} holds only for $\Delta$ and any constant $p>1$:
$$\bE^{\bP'\otimes \bP_0} \left[ \sup_{t_1\leq t \leq t_2} \left| \Delta^{\bP'}_t\right|^p \bigg| \cF_{t_1}^+\right] \leq C, \quad   \bP'\otimes \bP_0 -a.s.$$
since we only have an upper estimate on $\lambda_u^{\bP'} \leq L_1$. Then the linearization argument shows 
$$\delta  \cY_{t_1} = Y_{t_1} - \widetilde y^{\bP'\otimes \bP_0}_{t_1} = \bE^{\bP'\otimes \bP_0} \left[ \int_{t_1}^{t_2} \Lambda_s^{\bP'}\Delta_s^{\bP'} dK^{\bP'}_s \bigg| \overline{\cF}_{t_1}^+ \right].$$
Thus by the monotone condition {\bf H2} and for $1< p\leq \frac{\varrho \bsp}{\varrho + \bsp}$:
\begin{eqnarray*}
\delta \cY_{t_1} & \leq & \left( \bE^{\bP'\otimes \bP_0} \left[ \sup_{t_1 \leq s \leq t_2} (\Delta_s^{\bP'})^{\frac{p+1}{p-1}}  \bigg| \overline{\cF}_{t_1}^+ \right] \right)^{\frac{p-1}{p+1}} \left( \bE^{\bP'\otimes \bP_0} \left[ \left( \int_{t_1}^{t_2} \Lambda_s^{\bP'} dK^{\bP'}_s\right)^{\frac{p+1}{2}} \bigg| \overline{\cF}_{t_1}^+ \right] \right)^{\frac{2}{p+1}} \\
& \leq  & C \left( \bE^{\bP'\otimes \bP_0} \left[ \left( \int_{t_1}^{t_2} \Lambda_s^{\bP'} dK^{\bP'}_s\right)^{p} \bigg| \overline{\cF}_{t_1}^+ \right] \right)^{\frac{1}{p+1}}  \left( \bE^{\bP'\otimes \bP_0} \left[  \int_{t_1}^{t_2} \Lambda_s^{\bP'} dK^{\bP'}_s \bigg| \overline{\cF}_{t_1}^+ \right] \right)^{\frac{1}{p+1}} \\
& \leq & C \exp\left( \frac{pL_1T}{p+1} \right)\left( \bE^{\bP'\otimes \bP_0} \left[ \left(K^{\bP'}_{t_2} - K^{\bP'}_{t_1}\right)^{p} \bigg| \overline{\cF}_{t_1}^+ \right] \right)^{\frac{1}{p+1}}\left( \bE^{\bP'\otimes \bP_0} \left[  \int_{t_1}^{t_2} \Lambda_s^{\bP'} dK^{\bP'}_s \bigg| \overline{\cF}_{t_1}^+ \right] \right)^{\frac{1}{p+1}} \\
& \leq & C \exp\left( \frac{pL_1T}{p+1} \right) (C_{t_1}^{\bP})^{\frac{1}{p+1}} \left( \bE^{\bP'\otimes \bP_0} \left[  \int_{t_1}^{t_2} \Lambda_s^{\bP'} dK^{\bP'}_s \bigg| \overline{\cF}_{t_1}^+ \right] \right)^{\frac{1}{p+1}} .
\end{eqnarray*}
By arbitrariness of $\bP'$, and from the condition \eqref{eq:minim_cond_K_2BSDE}, we deduce
$$Y_{t_1} - \underset{\bP' \in \cP(t_1,\bP,\bF_+)}{\esssup^\bP} y_{t_1}^{\bP'}(t_2,Y_{t_2}) \leq 0, \quad \bP-a.s. $$
The end of the proof follows the arguments of \cite{poss:tan:zhou:15}.
\end{proof}


The comparison principle (\cite[Theorem 4.3]{poss:tan:zhou:15}) , the {\it a priori} estimate (\cite[Theorem 4.4]{poss:tan:zhou:15}) and the stability result (\cite[Theorem 4.5]{poss:tan:zhou:15}) for 2BSDE remain unchanged here. Indeed it is a direct consequence of Lemmas \ref{lem:comp_sol_gene_BSDE} and \ref{lem:L2_stability} and the formula \eqref{eq:representation_formula_2BSDE}. The other arguments follow exactly the proofs in \cite{poss:tan:zhou:15}.

\subsection{Existence of a solution of a solution for second-order BSDE}

In order to obtain a solution for the 2BSDE \eqref{eq:_general_2BSDE}, we define for any $(t,\omega) \in [0,T] \times\Omega$ 
\begin{equation} \label{eq:def_sup}
\widehat \cY_t(\omega) := \sup_{\bP \in \cP(t,\omega) }\bE^\bP( y^\bP_t ).
\end{equation}
This quantity $\widehat \cY$ is a ``candidate'' to be a solution of the 2BSDE \eqref{eq:_general_2BSDE}.

\subsubsection{Mesurability property of $\widehat \cY$}

Our aim is to prove that the conclusion of \cite[Theorem 2.1]{poss:tan:zhou:15} holds in our setting. To avoid to write again the complete machinery developed in \cite{poss:tan:zhou:15}, Section 2.4, we will use their Proposition 2.1. We already know that the solution $(y^\bP,z^\bP,m^\bP)$ exists since {\bf (H)} holds. Moreover from Lemma \ref{lem:L2_approx} and the condition on $\bsp$ and $\varrho$, $(y^\bP,z^\bP,m^\bP)$ can be approximated by solutions of Lipschitz BSDE in the space $\bD^p\times \bH^p \times \bM^p$ for any $1< p \leq \frac{\varrho \bsp}{\varrho + \bsp} < \bsp$. Moreover Lemmas \ref{lem:comp_sol_gene_BSDE} and \ref{lem:L2_stability} (for comparison and stability) hold. Thus the conclusion of \cite[Proposition 2.1]{poss:tan:zhou:15} is satisfied. Hence as in \cite[Theorem 2.1]{poss:tan:zhou:15}, the map 
$$(s,\omega,\bP) \mapsto \widehat \cY_s(\omega) = \sup_{\bP \in \cP(s,\omega) }\bE^\bP( y^\bP_s)$$
is measurable and 
$$(t,\omega) \mapsto \widehat \cY_t(\omega) $$
is $\mathcal{B}([0,T]) \otimes \cF_T$-universally measurable. Finally for all $(t,\omega) \in [0,T] \times \Omega$ and $\bF$-stopping times $\tau$ taking value in $[t,T]$ 
$$\widehat \cY_t(\omega) = \sup_{\bP \in \cP(t,\omega) } \bE^\bP( y^\bP_t(\tau, \widehat \cY_\tau)),$$
where $y^\bP(\tau, \widehat \cY_\tau)$ denotes the first component of the solution $(y,z,m)$ of the BSDE \eqref{eq:general_BSDE_P} with terminal time $\tau$ and terminal condition $\widehat \cY_\tau$ under the probability measure $\bP$. Notice that for any $\bP \in \cP(t,\omega)$ and any stopping time $\tau$ with values in $[t,T]$
\begin{equation} \label{eq:integrability_widehat_Y}
\bE^{\bP} (|\widehat \cY_\tau|^{\bsp \sq}) < +\infty.
\end{equation}
The proof is contained in \cite[Theorem 2.1]{poss:tan:zhou:15}. Let us recall the main ideas. First, for every $\P \in \Pc(t,\om)$ and $\eps > 0$, using the measurable selection theorem (see e.g. Proposition 7.50 of \cite{bert:shre:78} or Theorem III.82 in \cite{dell:meye:78}), one can choose a family of probability measures $(\Q^{\eps}_{\w})_{\w \in \Om}$ such that $\w \longmapsto \Q^{\eps}_{\w}$ is $\Fc_{\tau}-$measurable, and for $\P-a.e.$ $\w \in \Om$,
\begin{equation} \label{eq:DPP_interm_ineq}
\Q^{\eps}_{\w} \in \Pc(\tau(\w), \w)~~\mbox{and}~~\Ych_{\tau(\w)}(\w) - \eps\le \E^{\Q^{\eps}_{\w}} \big[ y_{\tau(\w)}^{\Q^{\eps}_{\w}}(T, \xi) \big] \le \Ych_{\tau(\w)}(\w).
\end{equation}
The integrability of $\Ych_{\tau}$ is a direct consequence of {\it a priori} estimates on the solution of BSDE \eqref{eq:general_BSDE_P} (see Lemma \ref{lem:a_priori_estim_BSDE} and the estimates below). We can then define the concatenated probability $\P^{\eps} := \P \otimes_{\tau} \Q^{\eps}_{\cdot}$ so that, by Assumption \ref{assum:main} (iii), $\P^{\eps} \in \Pc(t, \om)$. Notice that $\P$ and $\P^{\eps}$ coincide on $\Fc_{\tau}$ and hence $\E^{\P^{\eps}}\big[y^{\P^{\eps}}_{\tau} \big| \Fc_{\tau} \big] \in \L^{\bsp \sq}_{t,\om}(\Fc_\tau, \P)$. It follows then from the inequality in \eqref{eq:DPP_interm_ineq} that $\E^{\P}\big[ \big| \Ych_{\tau} \big|^{\bsp \sq} \big] < \infty$ and the upper bound depends on $\| \xi \|^{\bsp \sq}_{\bL_0^{\bsp \sq,\kappa}}$ and $\phi^{\bsp \sq,\kappa}_f $, but not on the choice of $\tau$.

\subsubsection{Path regularization}  \label{sssect:path_regul}

As in \cite[Section 3]{poss:tan:zhou:15}, to obtain a solution of the 2BSDE \eqref{eq:_general_2BSDE}, we shall characterize a c\`adl\`ag modification of $\Ych$ defined by \eqref{eq:def_sup}. Again we don't want to write all the details of the proof. Let us only explain the main difficulties due to the monotonicity condition {\bf H2}. The proof of \cite[Lemma 3.1]{poss:tan:zhou:15} does not use the Lipschitz property of $f$. 

The next step is to prove existence of right- and left- limits for $\Ych$ outside a $\cP_0$-polar sets (\cite[Lemma 3.2]{poss:tan:zhou:15}). The proof is based on a downcrossing estimate and the Lipschitz constant of $f$ w.r.t. $y$ explicitely appears. Since $f$ is no more Lipschitz continuous w.r.t. $y$, we show a downcrossing inequality for $\cY$, but under stronger condtions on $\xi$ and 
$\widehat{f}^{\P,0}$. Let us assume that there exists a constant $\mathfrak{C}$ such that for any $t$ and $\omega$
\begin{equation} \label{eq:ess_bounded_cond}
\underset{\P \in \Pc(t,\omega)}{\esssup}  \E^{\P} \left[  |\xi| + \sup_{s\in [t,T]} \big| \widehat{f}^{\P,0}_s \big|  \right]  \leq \mathfrak{C}.
\end{equation}
Under this condition and Estimate \eqref{eq:boundedness_y}, $y^\bP$ and $\Ych$ are also essentially bounded and we still denote by $\mathfrak{C}$ the upper bound.

\vspace{0.5cm}
\noindent {\bf An estimate on the downcrossings of $\widehat \cY$ under condition \eqref{eq:ess_bounded_cond}}. For simplicity we assume that $L_1= 0$ in the monotonicity condition {\bf H2} (see Remark \ref{rem:H2_L1_equal_0}) and we keep the same notations and the same scheme as in \cite{poss:tan:zhou:15}. For any $a < b$ and
for $J_N=\{\tau_0,\ldots \tau_N\}$ with $0=\tau_0< \tau_1 < \ldots < \tau_N = T$, a finite family of $\bF$-stopping times, we denote by $D_a^b(\widehat \cY, J_N)$ the number of downcrossings of the process $(\widehat \cY_{\tau_k}, \ 0\leq k\leq N)$ from $b$ to $a$. 

Let us fix $\bP \in \cP_0$ and consider the solution $(y^i,z^i,m^i)=(y^{i,\overline{\bP}_\omega^{\tau_{i-1}(\omega)}},z^{i,\overline{\bP}_\omega^{\tau_{i-1}(\omega)}},m^{i,\overline{\bP}_\omega^{\tau_{i-1}(\omega)}})$ of the BSDE with terminal condition $\widehat \cY_{\tau_i}$ and driver $\widehat f$ on the enlarged space under the probability measure $\overline{\bP}_\omega^{\tau_{i-1}(\omega)} = \bP_\omega^{\tau_{i-1}(\omega)} \otimes \bP_0$ and on the interval $[\tau_{i-1},\tau_i]$:
$$y^i_t = \widehat \cY_{\tau_i} + \int_t^{\tau_i}  \widehat f^{ \bP_\omega^{\tau_{i-1}(\omega)}}_u(y^i_u,\widehat a_u^{\frac{1}{2}} z^i_u) du - \int_t^{\tau_i}  z^i_u \widehat a^{\frac{1}{2}}_u dW^{ \bP_\omega^{\tau_{i-1}(\omega)}}_u - \int_t^{\tau_i} d m^i_u, \quad \overline{\bP}_\omega^{\tau_{i-1}(\omega)} -a.s.$$
We can linearize the previous BSDE (see the arguments before Equations \eqref{eq:linear_procedure} and \eqref{eq:density_chgt_measure}) to obtain
\begin{eqnarray*}
y^i_t &=& \widehat \cY_{\tau_i} + \int_t^{\tau_i} \left[ \widehat f^{ \bP_\omega^{\tau_{i-1}(\omega)}}_u(y^i_u,0) - \widehat f^{\bP_\omega^{\tau_{i-1}(\omega)},0}_u \right] du \\
& + & \int_t^{\tau_i}  \widehat f^{\bP_\omega^{\tau_{i-1}(\omega)},0}_u  du + \int_t^{\tau_i} \eta^i_u \widehat a_u^{\frac{1}{2}}  z^i_u du - \int_t^{\tau_i}  z^i_u \widehat a^{\frac{1}{2}}_u dW^{ \bP_\omega^{\tau_{i-1}(\omega)}}_u - \int_t^{\tau_i} d m^i_u.
\end{eqnarray*}
Note that we do not use the complete linearization of the BSDE. By the very definition of $\widehat \cY$, we get 
$$\bE^{\overline{\bP}_\omega^{\tau_{i-1}(\omega)} }(y^i_{\tau_{i-1}})  \leq \widehat \cY_{\tau_{i-1}}(\omega).$$ 

Now we consider again on $[\tau_{i-1},\tau_i]$ and the probability space, the solution of the following BSDE:
\begin{eqnarray*}
\widetilde y^i_t & = & \widehat \cY_{\tau_i} + \int_t^{\tau_i} \left[  \widehat f^{ \bP_\omega^{\tau_{i-1}(\omega)}}_u(\widetilde y^i_u,0)  - \widehat f^{\bP_\omega^{\tau_{i-1}(\omega)},0}_u  \right] \1_{\widetilde y^i_u \geq 0}du - \int_t^{\tau_i} \left| \widehat f^{\bP_\omega^{\tau_{i-1}(\omega)},0}_u \right| du \\
&& + \int_t^{\tau_i} \eta^i_u \widehat a_u^{\frac{1}{2}} \widetilde z^i_u du - \int_t^{\tau_i}  \widetilde z^i_u \widehat a^{\frac{1}{2}}_u dW^{ \bP_\omega^{\tau_{i-1}(\omega)}}_u - \int_t^{\tau_i} d \widetilde m^i_u, \quad \overline{\bP}_\omega^{\tau_{i-1}(\omega)} -a.s.
\end{eqnarray*}
Here the generator is 
$$(t,y,z) \mapsto \left[  \widehat f^{ \bP_\omega^{\tau_{i-1}(\omega)}}_t(y,0)  - \widehat f^{\bP_\omega^{\tau_{i-1}(\omega)},0}_t  \right] \1_{y \geq 0} - \left| \widehat f^{\bP_\omega^{\tau_{i-1}(\omega)},0}_t \right| + \eta^i_t \widehat a_t^{\frac{1}{2}} z$$
and satisfies Condition {\bf (H)}. By the monotonicity condition {\bf H2} with $L_1=0$, for $s\in[\tau_{i-1}, \tau_i]$
$$ \left[ \widehat f^{ \bP_\omega^{\tau_{i-1}(\omega)}}_s(y,0) - \widehat f^{\bP_\omega^{\tau_{i-1}(\omega)},0}_s \right] \mbox{sign}(y) \leq 0.$$
In particular when $y \geq 0$, the increment is  non-positive. Hence from the comparison principle for BSDEs, we have: $\widetilde y^i_{\tau_{i-1}}  \leq y^i_{\tau_{i-1}}$. 

Let for $t \in [\tau_{i-1},\tau_i]$
$$L_t := \cE \left( \int_{\tau_{i-1}}^t \eta^i_u dW^{ \bP_\omega^{\tau_{i-1}(\omega)}}_u \right),$$
be the stochastic exponential and 
$$\Xi^i_t =- \left[ \widehat f^{ \bP_\omega^{\tau_{i-1}(\omega)}}_t(\widetilde y^i_t,0) - \widehat f^{\bP_\omega^{\tau_{i-1}(\omega)},0}_t \right] \1_{\widetilde y^i_t \geq 0} .$$
From our previous arguments, the key point is that $\Xi^i_t \geq 0$. Then
\begin{eqnarray*}
\bE^{\overline{\bP}_\omega^{\tau_{i-1}(\omega)} } \left[ L_{\tau_i} \left( \widehat \cY_{\tau_i} - \int_{\tau_{i-1}}^{\tau_i} \Xi^i_u du -  \int_{\tau_{i-1}}^{\tau_i} \left| \widehat f^{\bP_\omega^{\tau_{i-1}(\omega)},0}_u \right| du 
\right) \right] \leq  \widehat \cY_{\tau_{i-1}}(\omega).
\end{eqnarray*}
And by definition of the r.c.p.d.
\begin{eqnarray*}
\bE^{\overline{\bQ} } \left[  \widehat \cY_{\tau_i} - \int_{\tau_{i-1}}^{\tau_i} \Xi^i_u du -  \int_{\tau_{i-1}}^{\tau_i} \left| \widehat f^{\bP_\cdot^{\tau_{i-1}(\cdot)},0}_u \right| du
 \bigg| \overline{\cF}^+_{\tau_{i-1}}\right] \leq  \widehat \cY_{\tau_{i-1}}, \ \bP \otimes \bP_0-a.s.
\end{eqnarray*}
where $\overline{\bQ}$ is equivalent to $\bP \otimes \bP_0$ with density
$$\frac{d\overline{\bQ}}{d(\bP \otimes \bP_0)}=\cE \left( \int_{\tau_{i-1}}^t \eta^i_u dW^{ \bP}_u \right), \quad t \in [\tau_{i-1},\tau_i].$$
We define $\Xi_t = \sum_{i=1}^n  \Xi^i_t \1_{[\tau_{i-1},\tau_i)}(t)$ and the discrete process
$$V_i := V_{\tau_i} = \widehat \cY_{\tau_i}  - \int_{0}^{\tau_i} \left( \Xi_s  + \left| \widehat f^{\bP,0}_s \right| \right)ds$$
For $b>0$ let
$$\overline{V}_{i} := V_i \wedge \left( b -  \int_0^{\tau_i} \left( \Xi_s + |\widehat f^{\bP,0}_s|\right) ds \right).$$
These two processes $V$ and $\overline{V}$ are $\overline{\bQ}$-supermartingales relative to $\overline{\bF}$ (see also the proof of Lemma A.1 in \cite{bouc:poss:tan:15} for more details). Up to this point we do not change the proof of \cite{poss:tan:zhou:15}, since we do not use Lipschitz continuity argument.

We also introduce 
$$u_t = b - \int_0^t   \left( \Xi_s + |\widehat f^{\bP,0}_s|\right) ds ,\quad \ell_t = - \int_0^t   \left( \Xi_s + |\widehat f^{\bP,0}_s|\right) ds,$$
together with: $u_i = u_{\tau_i}$ and $\ell_i = \ell_{\tau_i}$. Remark that $D_0^b (\widehat \cY,J_N) \leq D_\ell^u (V,J_N) =  D_\ell^u (\overline{V},J_N)$. Let us now explain how to derive a control on the downcrossings under the monotonicity condition, using the proof of Inequality (12.5), page 446 in \cite{doob:01} (see pages 448--449). We define $\theta_0 = 0$, 
$$S_1 = \min \{ j \geq 0, \overline{V}_j \geq u_j \}, \quad \theta_1 = S_1 \wedge N$$
and
$$S_k = \left\{ \begin{array}{ll}
\min \{ j > \theta_{k-1}, \overline{V}_j \geq u_j \}, & k \mbox{ odd}, \ k\geq 3,\\
\min \{ j > \theta_{k-1}, \overline{V}_j \leq \ell_j \}, & k \mbox{ even},\ k\geq 2,
\end{array} \right.$$
$\theta_k = S_k \wedge N$. We have 
$$\overline{V}_0 - \overline{V}_N = \sum_{j=0}^{N-1} \left[\overline{V}_{\theta_j} - \overline{V}_{\theta_{j+1}} \right].$$
Each bracket has a non-negative expectation (supermartingale inequality). We shall give a lower bound for each bracket with odd $j$.
On the set where the number of downcrossings is $k$, the first $k$ brackets in the above sum with odd $j$ are larger than:
$u_{T_j} - \ell_{T_{j+1}} \geq u_{T_j} - \ell_{T_{j}} = b$ 
 since $\ell$ is decreasing. For the other terms (again with odd $j$), only $\overline{V}_{T_{2k+1}} - \overline{V}_{T_{2k+2}}$ (i.e. $j=2k+1$) may be non zero and is bounded from below by $u_T - \overline{V}_T$. Hence we obtain the next estimate:
$$\bE^{\overline{\bQ}} \left[ \overline{V}_0 - \overline{V}_N \right] \geq  \bE^{\overline{\bQ}} \left[ b D_\ell^u (\overline{V},J_N) \right] +  \bE^{\overline{\bQ}} \left[(u_T - \overline{V}_T )\wedge 0 \right].$$
Thereby
\begin{eqnarray*}
b \bE^{\overline{\bQ}} \left[ D_0^b (\widehat \cY,J_N) \right] & \leq & b \bE^{\overline{\bQ}} \left[ D_\ell^u (\overline{V},J_N) \right]\\
& \leq &   \bE^{\overline{\bQ}} \left[\overline{V}_0 - \overline{V}_T - (u_T - \overline{V}_T )\wedge 0 \right] \\
 & \leq & \bE^{\overline{\bQ}} \left[(\widehat \cY_{0}\wedge b)- (\widehat \cY_T\wedge b)   + \int_0^{T}   \left( \Xi_s + |\widehat f^{\bP,0}_s|\right) ds  \right] \\
 & \leq & \bE^{\overline{\bQ}} \left[(\widehat \cY_{0}\wedge b) + (\widehat \cY_T\wedge b)^- + \int_0^{T}   \left( \Xi_s + |\widehat f^{\bP,0}_s|\right) ds  \right] . 
\end{eqnarray*}
Since $\eta$ is a bounded process, using H\"older's inequality, we get that for some $1 < p \leq \varrho$, there exists a constant $C$ depending on $p$ and $L_2$ such that 
$$b \bE^{\overline{\bQ}} \left[ D_0^b (\widehat \cY,J_N) \right]  \leq C \left( \bE^{\overline{\bP}} \left[(\widehat \cY_{0}\wedge b)^p + ((\widehat \cY_T\wedge b)^-)^p + \int_0^{T}   \left( (\Xi_s)^p + |\widehat f^{\bP,0}_s|^p\right) ds  \right] \right)^{1/p}.$$
To finish the proof, from Condition {\bf C2} and Estimate \eqref{eq:integrability_widehat_Y}, we only need to control the term $\Xi$ of the right-hand side. Recall that on $[\tau_{i-1},\tau_i)$,
$$ \Xi_t = \Xi^i_t = - \left[ \widehat f^{ \bP_\omega^{\tau_{i-1}(\omega)}}_t(\widetilde y^i_t,0) - \widehat f^{\bP_\omega^{\tau_{i-1}(\omega)},0}_t \right] \1_{\widetilde y^i_t \geq 0}$$
is non-negative. Using condition \eqref{eq:ess_bounded_cond} and Estimate \eqref{eq:boundedness_y}, from Hypothesis {\bf H4}, we deduce that for $t\in[\tau_{i-1},\tau_i)$ 
\begin{eqnarray*}
| \Xi_t  |^p & \leq & (1+\mathfrak{C}^\sq)^{p} \left( \widehat \Psi_t \right)^p .
\end{eqnarray*}
Since $p \leq \varrho$, from condition {\bf C1}, there exists a constant $C$ independent of the choice of $\tau_i$ such that
$$\bE^{\overline{\bP}} \left[ \int_0^{T}   (\Xi_s)^p  ds \right] \leq C.$$
Therefore $D_0^b (\widehat \cY,J_N)$ is $\overline{\bQ}-a.s.$ finite. Then for $a < b$ we have the same inequality:
\begin{equation*} 
\bE^{\overline{\bQ}} \left[ D_a^b (\widehat \cY,J_N) \right]  \leq \frac{1}{b-a}  \bE^{\overline{\bQ}} \left[(\widehat \cY_{0}\wedge (b-a)) + (\widehat \cY_T\wedge (b-a))^-  + \int_0^{T}\left( |\widehat f^{\bP,0}_s| - \Xi_s \right) ds  \right].
\end{equation*}
This estimate implies that $D_a^b (\widehat \cY,J_N)$ is $\overline{\bQ}-a.s.$ finite. Since the right-hand side does not depend on $N$, we can extend this estimate to any countable family of $\bF$-stopping times. And the conclusion of \cite[Lemma 3.2]{poss:tan:zhou:15} still holds under \eqref{eq:ess_bounded_cond}.

\vspace{0.5cm}
Let us define $\widehat \cY^+$ by
$$\widehat \cY^+_t := \limsup_{r\in \bQ \cap(t,T], r \downarrow t} \widehat \cY_r. $$
Let us stress that \cite[Lemmata 3.3 and 3.5]{poss:tan:zhou:15} does not use Lipschitz continuity of the generator w.r.t. $y$. As we did for the downcrossing estimate, we adapt the proof of \cite[Lemma 3.4]{poss:tan:zhou:15} to obtain that $\widehat \cY^+$ is c\`adl\`ag, $\cP_0-q.s.$. Moreover since the Lipschitz continuity w.r.t. $y$ is not involved, the representation formula of \cite[Lemma 3.5]{poss:tan:zhou:15} holds, that is for any $0\leq t \leq T$, for any $\bP\in \cP_0$, we have $\bP-a.s.$
$$\widehat \cY^+_t = \underset{\bP' \in \cP_0(t,\bP,\bF_+)}{\mbox{esssup}^\bP} y^{\bP'}_t(T,\xi).$$
From condition \eqref{eq:ess_bounded_cond}, we deduce that $\widehat \cY^+$ is essentially bounded (again by $\mathfrak{C}$) and thus belongs to $\bD^{\bsp \sq}_0 (\bF^{\cP_0+})$. Finally from our Section \ref{sect:mono_RBSDE} on reflected BSDE, we can argue as in \cite[Lemma 3.6]{poss:tan:zhou:15} and we obtain the next result.

\begin{Lemma} \label{lem:regularity_estim}
Under Conditions {\bf (H)-C1-C2} and assumption \eqref{eq:ess_bounded_cond}, this process $\widehat \cY^+$ is c\`adl\`ag, $\mathcal{P}_0$-q.s. and belongs to $\bD^{\bsp \sq}_0(\bF^{\mathcal{P}_0 +})$. Moreover it is a semi-martingale under any $\bP \in \cP_0$ with an explicit decomposition: there exists $(Z^\bP,M^\bP,K^\bP) \in \bH^p_0(\bF^{\bP+},\bP) \times \bM^p_0(\bF^{\bP+},\bP) \times \bI^p_0(\bF^{\bP+},\bP)$ with $\displaystyle 1 < p \leq \frac{\varrho \bsp}{\varrho + \bsp}$ and for any $t\in [0,T]$, $\bP$-a.s. 
$$\widehat \cY^+_t = \xi + \int_t^T \widehat f^\bP_s(\widehat \cY^+_s,\widehat a^{\frac{1}{2}}_s Z^\bP_s) ds - \int_t^T Z^\bP_s dX^{c,\bP}_s - \int_t^T dM^\bP_s + \int_t^T dK^\bP_s.$$
Moreover there is some $\bF^{\cP_0}$-predictable process $Z$ which aggregates the family $(Z^{\bP})_{\bP \in \cP_0}$.
\end{Lemma}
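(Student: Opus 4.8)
The plan is to establish the two genuinely new parts of the statement — the semi-martingale decomposition under each $\bP$ and the aggregation of $Z$ — since the \cad property and the membership $\widehat \cY^+ \in \bD^{\bsp \sq}_0(\bF^{\cP_0 +})$ have already been obtained above by adapting \cite[Lemmata 3.2 and 3.4]{poss:tan:zhou:15} under \eqref{eq:ess_bounded_cond}. As in the downcrossing estimate I would take $L_1 = 0$ (Remark \ref{rem:H2_L1_equal_0}). The starting point is the representation formula of \cite[Lemma 3.5]{poss:tan:zhou:15}, which I would first upgrade to its dynamic form: for any $\bP \in \cP_0$ and any $\bF^+$-stopping times $\sigma \leq \tau$,
$$\widehat \cY^+_\sigma = \underset{\bP' \in \cP_0(\sigma,\bP,\bF_+)}{\esssup^\bP} y^{\bP'}_\sigma(\tau,\widehat \cY^+_\tau), \quad \bP-a.s.$$
Since $\bP$ itself belongs to $\cP_0(\sigma,\bP,\bF_+)$, this gives $\widehat \cY^+_\sigma \geq y^\bP_\sigma(\tau,\widehat \cY^+_\tau)$, i.e. under each $\bP$ the process $\widehat \cY^+$ is a supersolution of the BSDE \eqref{eq:general_BSDE_P} with driver $\widehat f^\bP$ and terminal value $\xi$.

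Fixing $\bP \in \cP_0$, I would then read off the decomposition from the reflected-BSDE machinery of Section \ref{sect:mono_RBSDE}, exactly in the spirit of \cite[Lemma 3.6]{poss:tan:zhou:15}. The supersolution inequality above is the defining dynamic property needed to produce, via the nonlinear Doob--Meyer decomposition for a monotone driver in a general filtration (which is underpinned by Proposition \ref{prop:ref_BSDE_monotone_driver}), a non-decreasing process $K^\bP$, a martingale $M^\bP$ orthogonal to $X^{c,\bP}$, and a process $Z^\bP$ such that, $\bP$-a.s. and for all $t \in [0,T]$,
$$\widehat \cY^+_t = \xi + \int_t^T \widehat f^\bP_s(\widehat \cY^+_s, \widehat a^{\frac{1}{2}}_s Z^\bP_s)\, ds - \int_t^T Z^\bP_s\, dX^{c,\bP}_s - \int_t^T dM^\bP_s + \int_t^T dK^\bP_s.$$
For the integrability I would reuse the H\"older bookkeeping of the proof of Proposition \ref{prop:uniq_2BSDE}: the {\it a priori} estimates of Proposition \ref{prop:ref_BSDE_monotone_driver}, applied under \eqref{eq:ess_bounded_cond} and \eqref{eq:integrability_widehat_Y}, bound $\|Z^\bP\|_{\bH^p_0(\bP)}$, $\|M^\bP\|_{\bM^p_0(\bP)}$ and $\|K^\bP\|_{\bI^p_0(\bP)}$ in terms of $\|\widehat \cY^+\|_{\bD^{\bsp\sq}_0}$, $\phi_f^{p,\kappa}$ and $\|\widehat \Psi\|_{\bL^{\widehat p}}(1+\|\widehat \cY^+\|^{\bsp\sq}_{\bD^{\bsp\sq}_0})$. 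The growth term $\widehat \Psi (1+|\widehat \cY^+|^\sq)$ coming from {\bf H4} is precisely what forces the integrability exponent to drop from $\bsp\sq$ to any $p$ with $\widehat p = p\bsp/(\bsp-p) \leq \varrho$, that is $1 < p \leq \varrho\bsp/(\varrho+\bsp)$ as in \eqref{eq:cond_int}, and this places $(Z^\bP,M^\bP,K^\bP)$ in the announced spaces.

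It remains to aggregate the family $(Z^\bP)_{\bP \in \cP_0}$. Since the canonical process $X$ has continuous paths, the jumps of $\widehat \cY^+$ do not contribute to the covariation, so the decomposition above yields $\langle \widehat \cY^+, X\rangle_t = \int_0^t \widehat a_s Z^\bP_s\, ds$, $\bP$-a.s., for every $\bP$. The left-hand side can be constructed pathwise, as was done for $\langle X\rangle$ (Karandikar \cite{kara:95}), hence as an $\bF^{\cP_0}$-adapted continuous finite-variation process independent of $\bP$. Differentiating in $t$ and inverting $\widehat a$ through a fixed Borel measurable pseudo-inverse on its range then defines an $\bF^{\cP_0}$-predictable process $Z$ with $\widehat a_s Z_s\, ds = d\langle \widehat \cY^+, X\rangle_s$, so that $Z = Z^\bP$ holds $\widehat a_s\, ds \otimes \bP$-a.e. for every $\bP \in \cP_0$; this $Z$ aggregates the family.

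I expect the middle step to be the main obstacle. Because the driver is only monotone and of polynomial growth (via {\bf H4}), one cannot invoke the Lipschitz reflected-BSDE theory used in \cite{poss:tan:zhou:15} and must instead rely on Proposition \ref{prop:ref_BSDE_monotone_driver}; moreover the growth term forces the exponential-weight and H\"older argument that lowers the integrability exponent to $p$, which is the delicate quantitative point. By contrast, once $X$ is continuous the aggregation of $Z$ is routine.
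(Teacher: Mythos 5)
Your proposal is correct and follows essentially the same route as the paper: the paper likewise takes the c\`adl\`ag property and the $\bD^{\bsp\sq}_0$ bound as already established by the downcrossing argument, invokes the dynamic representation formula to see that $\widehat\cY^+$ is an $\widehat f^\bP$-supersolution under each $\bP$, and then explicitly states that the decomposition is obtained by ``arguing as in \cite[Lemma 3.6]{poss:tan:zhou:15}'' with the Lipschitz reflected-BSDE input replaced by Proposition \ref{prop:ref_BSDE_monotone_driver}, the integrability exponent dropping to $p$ satisfying \eqref{eq:cond_int} exactly as you describe, and $Z$ aggregated through the pathwise quadratic covariation $\langle\widehat\cY^+,X\rangle$. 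Your write-up in fact supplies more detail than the paper's one-line reference.
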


\subsubsection{Conclusion}

Now we come to the existence result (equivalent to \cite[Theorems 4.1 and 4.4]{poss:tan:zhou:15}).
\begin{Prop} \label{prop:2BSDE_existence}
Under Conditions {\bf (H)-C1-C2}, there exists a solution $(Y,Z,M^\bP,K^\bP)$ to the 2BSDE \eqref{eq:_general_2BSDE} in the space $ \bD^{\bsp \sq}_0(\bF^{\cP_0}_+) \times \bH^p_0(\bF^{\cP_0}_+) \times \bM^p_0((\bF^{\bP}_+)_{\bP\in \cP_0}) \times \bI^p_0((\bF^{\bP}_+)_{\bP\in \cP_0})$ for any $p> 1$ satisfying \eqref{eq:cond_int}. More precisely there exists a constant $C$ depending on $\bsp$, $\sq$ $T$, $L_1$, $L_2$  such that 
\begin{equation}\label{eq:estimtes_2BSDE_sol}
\|Y\|^{\bsp\sq}_{\bD_0^{\bsp \sq}} + \|Z\|^p_{\bH_0^p} + \sup_{\bP\in \cP_0} \bE^\bP \left( K^\bP_T \right)^p+ \sup_{\bP\in \cP_0} \bE^\bP \left(\left[ M^\bP \right]_T \right)^{p/2} \leq C \left(\|\xi \|^{\bsp \sq}_{\bL_0^{\bsp \sq}} + \phi_f^{\bsp\sq,\kappa} \right).
\end{equation}
\end{Prop}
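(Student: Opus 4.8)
The plan is to build the solution in two stages: first under the extra boundedness hypothesis \eqref{eq:ess_bounded_cond}, where Lemma \ref{lem:regularity_estim} already does almost all the work, and then to reach the general data of {\bf (H)-C1-C2} by a truncation argument, reading off the a priori bound \eqref{eq:estimtes_2BSDE_sol} along the way. In the bounded case I would simply set $Y:=\widehat\cY^+$ and let $(Z,M^\bP,K^\bP)$ be the triple furnished by Lemma \ref{lem:regularity_estim}, so that \eqref{eq:_general_2BSDE} holds $\cP_0$-q.s. with terminal value $\xi$, with $Z$ aggregating the family $(Z^\bP)_{\bP\in\cP_0}$, and with the stated integrability $\widehat\cY^+\in\bD^{\bsp\sq}_0(\bF^{\cP_0+})$ and $(Z,M^\bP,K^\bP)\in\bH^p_0\times\bM^p_0\times\bI^p_0$ for $p$ as in \eqref{eq:cond_int}. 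The only item of Definition \ref{def:sol_2BSDE} that then remains is the minimality condition \eqref{eq:minim_cond_K_2BSDE}.

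For the minimality I would use the representation formula recorded just before Lemma \ref{lem:regularity_estim}: for each $\bP\in\cP_0$, $\widehat\cY^+_t$ equals the essential supremum of $y^{\bP'}_t(T,\xi)$ over $\bP'\in\cP_0(t,\bP,\bF_+)$. Linearising $Y-y^{\bP'}$ exactly as in the third step of Proposition \ref{prop:uniq_2BSDE} and introducing the integrating factor $\Lambda^{\bP'}_s=\exp(\int_t^s\lambda^{\bP'}_u\,du)$ together with the Girsanov density $\Delta^{\bP'}$ absorbing the $z$-increment (via {\bf H3}) gives $0\le Y_t-y^{\bP'}_t=\bE^{\bP'\otimes\bP_0}[\int_t^T\Lambda^{\bP'}_s\Delta^{\bP'}_s\,dK^{\bP'}_s\mid\overline\cF^+_t]$. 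Taking the essential infimum over $\bP'$ and using that the supremum in the representation is attained along an upward-directed family forces this gap to vanish, which is precisely \eqref{eq:minim_cond_K_2BSDE}. The weight $\Lambda^{\bP'}$ has to be kept because {\bf H2} only yields $\lambda^{\bP'}\le L_1$, so it is bounded above but not below and cannot be discarded as in the Lipschitz setting.

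To remove \eqref{eq:ess_bounded_cond} I would truncate, replacing $\xi$ by $(\xi\wedge n)\vee(-n)$ and $f^0$ by its truncation so that {\bf (H)-C1-C2} still hold uniformly in $n$ while \eqref{eq:ess_bounded_cond} is met; the first stage then produces solutions $(Y^n,Z^n,M^{n,\bP},K^{n,\bP})$. The a priori estimate \eqref{eq:estimtes_2BSDE_sol}, uniform in $n$, follows by combining the control of $\|Y\|_{\bD^{\bsp\sq}_0}$ and $\|Z\|_{\bH^p_0}$ coming from the representation formula \eqref{eq:representation_formula_2BSDE} and the BSDE a priori bounds (Lemma \ref{lem:a_priori_estim_BSDE}) under {\bf C1-C2}, the estimate on $\sup_\bP\bE^\bP[(K^\bP_T)^p]$ already derived in the second step of Proposition \ref{prop:uniq_2BSDE}, and the bound on $\sup_\bP\bE^\bP[[M^\bP]_T^{p/2}]$ read off the orthogonal decomposition. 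Using this uniform bound together with the stability result (the analogue of \cite[Theorem 4.5]{poss:tan:zhou:15}, a consequence of Lemma \ref{lem:L2_stability} and \eqref{eq:representation_formula_2BSDE}) shows that $(Y^n,Z^n,M^{n,\bP},K^{n,\bP})$ is Cauchy in the ambient norms and converges to a solution $(Y,Z,M^\bP,K^\bP)$ of \eqref{eq:_general_2BSDE}; the minimality \eqref{eq:minim_cond_K_2BSDE} passes to the limit through the representation formula, which is stable under the $\bD^{\bsp\sq}_0$-convergence of $Y^n$ and the $\bL^{\bsp\sq}_0$-convergence of the truncated terminal values.

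The main obstacle is the second stage. Because the driver is only monotone in $y$, not of linear growth, and not Lipschitz, the uniform-in-$n$ integrability that powers the limit passage rests entirely on the strong assumptions {\bf C1-C2} and on the exponent bookkeeping \eqref{eq:cond_int} (notably $\widehat p\le\varrho$, which is what lets the $\widehat\Psi(1+|Y|^\sq)$ term in the estimate of $K$ be absorbed); one has to verify that truncation preserves these uniform bounds, and, more delicately, that the minimality condition survives the limit even though its weight $\Lambda^{\bP'}$ depends both on the 2BSDE solution and on the approximating BSDE solutions.
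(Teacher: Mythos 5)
Your proposal follows the paper's proof essentially step for step: truncate $\xi$ and $f^0$ so that \eqref{eq:ess_bounded_cond} holds with $\mathfrak{C}=n$, invoke Lemma \ref{lem:regularity_estim} to produce a solution in the bounded case, obtain minimality of $K^{n,\bP}$ (and later of $K^{\bP}$) by linearization, and pass to the limit via the stability result, reading off \eqref{eq:estimtes_2BSDE_sol} from the representation formula and the BSDE {\it a priori} bounds. The one imprecision worth flagging is in the minimality step: the fact that $Y_t-y^{\bP'}_t=\bE^{\bP'\otimes\bP_0}\left[\int_t^T\Lambda^{\bP'}_s\Delta^{\bP'}_s\,dK^{\bP'}_s\,\big|\,\overline{\cF}_t^+\right]$ has essential infimum zero over $\bP'$ is not ``precisely'' \eqref{eq:minim_cond_K_2BSDE}, since the minimality condition carries only the weight $\Lambda^{\bP'}$ and not the Girsanov density $\Delta^{\bP'}$; to strip off $\Delta^{\bP'}$ one still needs the H\"older argument of the paper, which bounds $\bE^{\bP'\otimes\bP_0}\left[\int_t^T\Lambda^{\bP'}_s\,dK^{\bP'}_s\,\big|\,\overline{\cF}_t^+\right]$ by $C_T\,(C_t^{\bP})^{1/(2p)}(\delta\widehat\cY^+_t)^{1/2}$ using the $p$-th moment bound on $K^{\bP'}_T-K^{\bP'}_t$ together with the negative moments of $\inf_{t\le s\le T}\Delta^{\bP'}_s$ --- ingredients you already invoke elsewhere, so this is a compression rather than a missing idea.
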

\begin{proof}
For the existence we argue as in \cite{poss:tan:zhou:15}, except for the minimality condition on $K^\bP$, together with a truncation procedure. Let us define for any $n \in \N$
$$\xi^n = -n\vee \xi \wedge n, \quad \widehat{f}^{\P,0,n}_s=-n\vee  f(s, X_{\cdot\wedge s},0,0,\widehat{a}_s, b^\P_s) \wedge n.$$
$\xi^n$ and $\widehat{f}^{\P,0,n}$ obviously verify condition \eqref{eq:ess_bounded_cond} with $\mathfrak{C}=n$. From Lemma \ref{lem:regularity_estim}, we obtain the existence of a solution $(Y^n,Z^n,M^{n,\bP},K^{n,\bP})$ to the 2BSDE \eqref{eq:_general_2BSDE} with terminal condition $\xi^n$ and generator $f^n$ defined by
$$f^n(t, \om, y, z, a, b) =\left( f (t, \om, y, z, a, b) - \widehat{f}^{\P,0}_t \right)+ \widehat{f}^{\P,0,n}_t.$$
Note that $f^n$ satisfies Conditions {\bf (H)-C1-C2}. The minimality criterion on $K^{n,\bP}$ is proved arguing as in the proof of minimality for $K^\bP$ (see just below).

Now the stability result shows that the sequence $(Y^n,Z^n,M^{n,\bP}-K^{n,\bP})$ converges in $\bD_0^{\bsp \sq} \times \bH_0^p \times  \bM^p_0$ to some process $(Y,Z,N^\bP)$. The supermartingale $N^\bP$ can be decomposed: $N^\bP = M^\bP - K^\bP$, where $M^\bP$ is a martingale under $\bP$, orthogonal to the canonical process and $K^\bP$ is a  non-decreasing process. The limit $(Y,Z,M^\bP,K^\bP)$ is a solution of the 2BSDE \eqref{eq:_general_2BSDE} if $K^\bP$ satisfies the required minimality condition. 

Let us prove the minimality criterion for $K^\bP$ (again the proof is the same for $K^{n,\bP}$). For $\bP' \in \cP(t,\bP,\bF_+)$, let us denote $\delta \widehat \cY^+ = \widehat \cY^+ - y^{\bP'}(T,\xi)$ and use again a linearization argument:
$$\delta \widehat \cY^+_t = \bE^{\bP'\otimes \bP_0} \left[ \int_t^T \Lambda_s^{\bP'}\Delta_s^{\bP'} dK^{\bP'}_s \bigg| \overline{\cF}_t^+ \right]$$
with
$$\Lambda_s^{\bP'} =  \exp \left( \int_{t}^s \lambda_u^{\bP'} du \right) , \quad \Delta_s^{\bP'} = \exp\left[ -\int_t^s \eta_s^{\bP'} dW_s^{\bP'} - \frac{1}{2} \int_t^s \|\eta_s^{\bP'} \|^2 ds \right].$$
Thus $\bP$-a.s.
$$\delta \widehat \cY^+_t \geq \bE^{\bP'\otimes \bP_0} \left[\inf_{t \leq s \leq T} \Delta_s^{\bP'}  \int_t^T\Lambda_s^{\bP'} dK^{\bP'}_s \bigg| \overline{\cF}_t^+ \right]$$
and for $p$ satisfying \eqref{eq:cond_int}, let $p'$ be the H\"older conjugate of $p$:
\begin{eqnarray*}
&& \bE^{\bP'\otimes \bP_0} \left[ \int_t^T \Lambda_s^{\bP'} dK^{\bP'}_s \bigg| \overline{\cF}_t^+ \right] \leq  \left\{\bE^{\bP'\otimes \bP_0} \left[\inf_{t \leq s \leq T} \Delta_s^{\bP'}  \int_t^T\Lambda_s^{\bP'} dK^{\bP'}_s \bigg| \overline{\cF}_t^+ \right] \right\}^{1/2} \\
&& \qquad \times \left\{\bE^{\bP'\otimes \bP_0} \left[e^{pL_1T}(K^{\bP'}_T-K^{\bP'}_t)^p \bigg| \overline{\cF}_t^+ \right] \right\}^{1/(2p)} \left\{\bE^{\bP'\otimes \bP_0} \left[ \left( \inf_{t \leq s \leq T} \Delta_s^{\bP'} \right)^{-p'} \bigg|\overline{\cF}_t^+ \right] \right\}^{1/{2p'}} \\
&& \quad \leq C_T (C_{t}^{\bP} )^{1/(2p)} (\delta \widehat \cY^+_t)^{1/2}.
\end{eqnarray*}
Hence the condition \eqref{eq:minim_cond_K_2BSDE} follows now immediately. 

To obtain the {\it a priori} estimate \eqref{eq:estimtes_2BSDE_sol} for the solution of the 2BSDE, we use the {\it a priori} estimate given in Lemma \ref{lem:a_priori_estim_BSDE}, the representation formula \eqref{eq:representation_formula_2BSDE} and we argue as in the proof of \cite[Theorem 4.4]{poss:tan:zhou:15}. 
\end{proof}

\subsection{Discussion and comparison with \cite{poss:13}} \label{ssect:discussion}

When $f$ is Lipschitz continuous w.r.t. $y$, the process $\lambda$ is bounded also from below. Thus our minimality condition is equivalent to the classical one:
\begin{equation} \label{eq:class_minim_cond_K}
\underset{\bP' \in \cP(t,\bP,\bF_+)}{\essinf^\bP} \bE^{\bP'} \left[K^{\bP'}_T - K^{\bP'}_t\bigg|  \cF_t^+ \right] = 0, \quad 0 \leq t \leq T, \ \bP-a.s., \ \forall \bP \in \cP_0.
\end{equation}
In general we only have that the classical condition \eqref{eq:class_minim_cond_K} implies \eqref{eq:minim_cond_K_2BSDE}.

If there is only one probablity measure $\bP$ in $\cP_0$, the minimality condition \eqref{eq:minim_cond_K_2BSDE} imposed on $K^\bP$ should imply that $K^\bP$ is equivalent to zero. In the Lipschitz setting this is a direct consequence of \eqref{eq:class_minim_cond_K}. In our setting it is still true but the arguments are not direct. From the proof of Proposition \ref{prop:uniq_2BSDE}, \eqref{eq:minim_cond_K_2BSDE} implies uniqueness of the solution. But if $\cP_0$ is the singleton, the solution $(y^\bP,z^\bP,0)$ of the classical BSDE \eqref{eq:general_BSDE_P} becomes a solution of the 2BSDE \eqref{eq:_general_2BSDE}. By uniqueness, $K^\bP\equiv 0$.

The monotone case was already studied in \cite{poss:13}. The generator $f$ satisfies Condition {\bf (H)} and is uniformly continuous in $y$, uniformly in $(t,\omega,z,a)$ and has the linear growth property:
$$|f(t,\omega,y,0,a)|\leq |f(t,\omega,0,0,a)|+C(1+|y|).$$
Then under some integrability condition on $\xi$ and $\widehat f_s^{\bP,0}$, from \cite[Theorem 2.2]{poss:13}, there exists a unique solution of the 2BSDE \eqref{eq:_general_2BSDE} such that $K^\bP$ satisfies the minimality condition \eqref{eq:class_minim_cond_K}. 

Therefore if the generator $f$ satisfies the assumptions of \cite{poss:13} and Condition {\bf (H)}, then the solution obtained by \cite{poss:13} with minimality condition \eqref{eq:class_minim_cond_K} is also the solution given by Propositions \ref{prop:uniq_2BSDE} and \ref{prop:2BSDE_existence} with minimality criterion \eqref{eq:minim_cond_K_2BSDE}. Let us emphasize that the ways to obtain the solution are completely different. Indeed in \cite{poss:13} the generator is approximated by a sequence of Lipschitz generators $f_n$. For any fixed $n$ using \cite{sone:touz:zhan:12}, there exists a unique solution $(Y^n,Z^n,M^{n,\bP}, K^{n,\bP})$ to the 2BSDE \eqref{eq:_general_2BSDE} with generator $f_n$ and $K^{n,\bP}$ verifies \eqref{eq:class_minim_cond_K}. Then the core of the paper \cite{poss:13} consists to show that the sequence $(Y^n,Z^n,M^{n,\bP}, K^{n,\bP})$ converges to $(Y,Z,M^\bP,K^\bP)$ and that \eqref{eq:class_minim_cond_K} is preserved through the limit. The uniform continuity and the linear growth conditions of $f$ w.r.t. $y$ are crucial there.

\section{Liquidation problem} \label{sect:liquidation_pb}

\subsection{The standard formulation of \cite{anki:jean:krus:13,krus:popi:15}} \label{sect:standard_form_liquid_pb}

In \cite{krus:popi:15} the authors consider a probability space $(\Omega,\cF,\bP)$. The filtration $\bF$ is assumed to be complete, right continuous on $[0,T]$ and left-continuous at time $T$ (see \cite{popi:16} for details on this assumption). In \cite{anki:jean:krus:13}, $\bF$ is generated by a $d$-dimensional Brownian motion and thus is quasi-left continuous. 

Given $\xi$ a $\cF_T$-measurable non-negative random variable such that $\cS = \{\xi=+\infty\}$ has a positive probability, $\eta$ (resp. $\gamma$) a positive (resp. non-negative) process, the studied optimal stochastic control problem is defined as follows. For some $\kq > 1$, consider the functional 
\begin{equation}\label{eq:control_pb}
J(t,\cX) = \bE \left[  \int_t^T \left( \eta_s |\alpha_s|^{\kq} + \gamma_s |\cX_s|^{\kq} \right) ds + \xi |\cX_T|^{\kq} \bigg| \cF_t \right]
\end{equation}
over all progressively measurable processes $\cX$ that satisfy the dynamics
\begin{equation}\label{eq:state_dyn}
\cX_s =x +\int_t^s \alpha_u du , \quad s \geq t
\end{equation}
for some $\alpha$ with $\int_t^T |\alpha_s | ds < +\infty$ $\bP$-a.s., and $x \geq 0$. To have a finite value $J(t,\cX)$, the terminal state constraint is
\begin{equation*}
\cX_T \1_{\cS}= 0
\end{equation*}
together with the convention $0\times \infty = 0$. 
The set of such processes $\cX$ is denoted by $\cA^0_{\cS}(t,x)$. 
We introduce the random field $v$ that represents for each initial condition $(t,x)$ the minimal value of $J(t,\cX)$
\begin{equation}\label{eq:value_fct}
v(t,x) = \underset{\cX\in \cA^0_{\cS}(t,x)}{\essinf} J(t,\cX).
\end{equation}
We follow the convention that the infimum over the empty set is equal to $\infty$. For some $L>0$ we also consider the unconstrained minimization problem:
\begin{eqnarray} \nonumber
v^L(t,x) &=& \underset{\cX\in \cA(t,x)}{\essinf} J^L(t,\cX) \\ \label{eq:unconstraint_value_fct}
&  = & \underset{\cX\in \cA(t,x)}{\essinf} \bE \left[  \int_t^T \left( \eta_s |\alpha_s|^{\kq} + (\gamma_s \wedge L) |\cX_s|^{\kq} \right) ds  +(L\wedge \xi) |\cX_T|^{\kq}  \bigg| \cF_t \right]
\end{eqnarray}
where $\cA(t,x)$ is the set of all progressively measurable processes $\cX$ of the form \eqref{eq:state_dyn}. Here no terminal constraint is imposed on $\cX$. 

In \cite{anki:jean:krus:13,krus:popi:15} the authors show that the related singular BSDE is of the following form:
\begin{equation} \label{eq:bsde}
dy_t  =   \frac{y_t^{\sq}}{(\sq-1)\eta_t^{\sq-1}} dt  - \gamma_t dt +  z_t dW_t + dm_t
\end{equation}
with terminal condition equal to $\xi$. Here $\sq>1$ is the H\"older conjugate of $\kq$: $(\kq-1)(\sq-1)=1$. The processes $\eta$ and $\gamma$ satisfy for some $\ell > 1$
$$\bE \int_0^T \left[ (\eta_t + (T-t)^{\kq} \gamma_t)^\ell + \frac 1{\eta_t^{\sq-1}}\right] dt <\infty.$$
It is proved that the singular BSDE \eqref{eq:bsde} has a minimal super-solution $(y,z,m)$ satisfying:
\begin{enumerate}
\item for any $t < T$
$$\bE \left[ \sup_{ 0 \leq s \leq t} |y_s|^\ell + \left( \int_0^t |z_s|^2 ds \right)^{\ell/2} + [m]_t^{\ell/2}\right] < +\infty;$$
\item $Y_t \geq 0$ for any $t$, {\it a.s.}
\item for all $0\leq s \leq t < T$ 
\begin{eqnarray*}
y_{s}  & = & y_{t} + \int_{s}^{t} \left[ - \frac{y_u^{\sq}}{(\sq-1)\eta_u^{\sq-1}} + \gamma_u \right] du - \int_{s}^{t} z_u dW_u - \int_s^t dm_u.
\end{eqnarray*}
\item and the singular terminal condition: $\bP-a.s.$
\begin{equation} \label{eq:terminal_cond}
\liminf_{t \to T} y_{t } \geq \xi.
\end{equation}
\end{enumerate}
To prove the existence of a minimal solution, a truncation procedure is used. For any $L \geq 0$ we consider the BSDE
\begin{equation} \label{eq:truncated_bsde}
dy^L_t = \frac{(y^L_t)^{\sq}}{(\sq-1)\eta_t^{\sq-1}} dt -( \gamma_t \wedge L) dt +z^L_t dW_t + dm^L_t
\end{equation}
with the bounded terminal condition $y^L_T = \xi \wedge L$. This BSDE has a unique solution $(y^L,z^L,m^L)$ (see \cite{bria:dely:hu:03}). Moreover the solution satisfies the {\it a priori} estimate
\begin{equation}\label{eq:a_priori_estimate_Y_L}
0\leq y^{L}_t \leq \frac{1}{(T-t)^{\kq}} \bE \left[ \ \int_t^{T} \left( \eta_s + (T-s)^{\kq} \gamma_s \right) ds \bigg| \cF_t\right]
\end{equation}
Next, by passing to the limit $L\to \infty$, the minimal super-solution $(y,z,m)$ of \eqref{eq:bsde} with terminal condition \eqref{eq:terminal_cond} is obtained. Let us emphasize here that the left-continuity condition on the filtration is used only to obtain the weak terminal condition \eqref{eq:terminal_cond}.

\begin{Lemma} \label{prop:optimal_control_uncons}
Let $(y^L, z^L,m^L)$ be the solution to \eqref{eq:truncated_bsde} with terminal condition $y^L_T= \xi \wedge L$. Then the process $\cX^L$ satisfying the linear dynamics
$$\cX^L_s=x-\int_t^s\left(\frac{y^L_r}{\eta_r}\right)^{\sq-1}\cX^L_rdr,$$
is optimal in \eqref{eq:unconstraint_value_fct}. Moreover, we have $v^L(t,x) = y^L_t x^{\kq}$.

Let $(y,z,m)$ denote the minimal solution to \eqref{eq:bsde} with singular terminal condition \eqref{eq:terminal_cond}. Then we have $v(t,x)=y_t x^{\kq}$. Moreover, the process $\cX$ satisfying the linear dynamics
$$\cX_s=x-\int_t^s\left(\frac{y_u}{\eta_u}\right)^{\sq-1}\cX_udu,$$
belongs to $\cA^0_{\cS}(t,x)$ and is optimal in \eqref{eq:value_fct}.
\end{Lemma}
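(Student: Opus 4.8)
The plan is to prove both assertions by a verification argument based on It\^o's formula, handling first the unconstrained (truncated) problem and then passing to the limit $L\to\infty$ for the singular one. Fix $t\in[0,T]$, the initial position $x\geq 0$, and an arbitrary admissible $\cX\in\cA(t,x)$ with control $\alpha$. Since $\cX$ has absolutely continuous paths, the product $y^L_s|\cX_s|^{\kq}$ has no quadratic-variation contribution, and combining the dynamics \eqref{eq:truncated_bsde} of $y^L$ with \eqref{eq:state_dyn} of $\cX$ and adding the running cost density gives
$$d\big(y^L_s|\cX_s|^{\kq}\big) + \big(\eta_s|\alpha_s|^{\kq} + (\gamma_s\wedge L)|\cX_s|^{\kq}\big)\,ds = g_s(\alpha_s)\,ds + dN_s,$$
where $N$ is the local martingale obtained by integrating $z^L\,dW+dm^L$ against $|\cX|^{\kq}$ and
$$g_s(\alpha) = \eta_s|\alpha|^{\kq} + \kq\, y^L_s|\cX_s|^{\kq-2}\cX_s\,\alpha + |\cX_s|^{\kq}\frac{(y^L_s)^{\sq}}{(\sq-1)\eta_s^{\sq-1}}.$$
The two $(\gamma_s\wedge L)|\cX_s|^{\kq}$ terms cancel by construction: this is precisely why the generator in \eqref{eq:truncated_bsde} is chosen as it is.

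The key algebraic step is the pointwise minimization of the strictly convex map $\alpha\mapsto g_s(\alpha)$. Using $y^L_s\geq 0$ (from \eqref{eq:a_priori_estimate_Y_L}), its unique minimizer, in feedback form, is $\alpha^{\ast}_s=-\big(y^L_s/\eta_s\big)^{\sq-1}\cX_s$, which is exactly the feedback defining $\cX^L$; and the conjugacy relation $(\kq-1)(\sq-1)=1$ makes the minimal value identically zero, so $g_s\geq 0$ with equality if and only if $\alpha=\alpha^{\ast}$. Integrating the identity above on $[t,T]$, recalling $y^L_T=\xi\wedge L$, and taking $\bE[\,\cdot\,\,|\,\cF_t]$ after a localization that removes $N$ (justified via \eqref{eq:a_priori_estimate_Y_L} and Fatou's lemma), I would obtain $J^L(t,\cX)\geq y^L_t x^{\kq}$, with equality for $\cX=\cX^L$. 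Since $\cX^L_s=x\exp\!\big(-\int_t^s(y^L_r/\eta_r)^{\sq-1}dr\big)$ is bounded by $x$ and hence admissible, this yields $v^L(t,x)=y^L_t x^{\kq}$ and the optimality of $\cX^L$.

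For the constrained problem I would combine this with the monotone limit $y^L\uparrow y$ as $L\to\infty$. The lower bound is immediate: for any $\cX\in\cA^0_{\cS}(t,x)$ one has $J(t,\cX)\geq J^L(t,\cX)\geq v^L(t,x)=y^L_t x^{\kq}$ because $\gamma\geq\gamma\wedge L$ and $\xi\geq\xi\wedge L$; letting $L\to\infty$ gives $J(t,\cX)\geq y_t x^{\kq}$, hence $v(t,x)\geq y_t x^{\kq}$. For the upper bound and optimality I would take the candidate $\cX_s=x\exp\!\big(-\int_t^s(y_u/\eta_u)^{\sq-1}du\big)$, first checking it lies in $\cA^0_{\cS}(t,x)$: the weak terminal condition \eqref{eq:terminal_cond} forces $\int_t^T(y_u/\eta_u)^{\sq-1}du=+\infty$ on $\cS=\{\xi=+\infty\}$, so that $\cX_T\1_{\cS}=0$ with the convention $0\times\infty=0$. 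Applying It\^o's formula to $y_s|\cX_s|^{\kq}$ on $[t,T-\eps]$, where the feedback makes the analogue of $g_s$ vanish, then gives
$$y_t x^{\kq}=\bE\!\left[\,y_{T-\eps}|\cX_{T-\eps}|^{\kq}+\int_t^{T-\eps}\!\big(\eta_s|\alpha_s|^{\kq}+\gamma_s|\cX_s|^{\kq}\big)\,ds\,\middle|\,\cF_t\right],$$
and I would let $\eps\to 0$.

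The main obstacle is this terminal passage to the limit, namely showing $\bE[\,y_{T-\eps}|\cX_{T-\eps}|^{\kq}\,|\,\cF_t]\to\bE[\,\xi|\cX_T|^{\kq}\,|\,\cF_t]$. This is where the singular nature of the problem bites: one needs a matching upper estimate (from \eqref{eq:a_priori_estimate_Y_L} and the explicit exponential decay of $\cX$) together with a lower estimate (from $\liminf_{t\to T}y_t\geq\xi$ and Fatou) to sandwich the limit, and the left-continuity of the filtration at $T$ is exactly what rules out an uncontrolled terminal jump that would spoil this identification. The localization and uniform-integrability bookkeeping required to interchange limits and conditional expectations throughout is routine but must be carried out carefully, using the integrability class of the minimal supersolution recalled above.
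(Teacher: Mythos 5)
Your verification scheme is correct in structure and coincides with the approach the paper relies on: the paper itself states this lemma without proof (it is quoted from \cite{anki:jean:krus:13,krus:popi:15}), but its Lemma \ref{lem:optim_control} and Proposition \ref{prop:constr_pb} are the uncertainty analogues proved by exactly this It\^o/completion-of-the-square argument. Your computation of $g_s$, its pointwise minimization at $\alpha^*_s=-(y^L_s/\eta_s)^{\sq-1}\cX_s$ with minimal value zero thanks to $(\kq-1)(\sq-1)=1$, the resulting identity $v^L(t,x)=y^L_t x^{\kq}$, the monotone passage $y^L\uparrow y$ for the lower bound $v(t,x)\geq y_t x^{\kq}$, and the role you assign to Fatou and to the left-continuity of the filtration at $T$ are all accurate.

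The one genuine gap is the admissibility of the candidate $\cX$ for the constrained problem. You assert that the weak terminal condition $\liminf_{s\to T}y_s\geq\xi$ ``forces'' $\int_t^T(y_u/\eta_u)^{\sq-1}du=+\infty$ on $\cS$, but this does not follow from the blow-up of $y$ alone: $\eta$ is an unbounded stochastic process and could a priori explode fast enough near $T$ to keep $(y_u/\eta_u)^{\sq-1}$ bounded. Since the divergence of that integral is exactly equivalent to $\cX_T\1_{\cS}=0$ through the explicit exponential formula, your justification as written is circular. The actual argument (carried out in Proposition \ref{prop:constr_pb} of the paper, and in the cited references) is to check that $\theta_s=y_s(\cX_s)^{\kq-1}-y_t x^{\kq-1}+\int_t^s(\cX_u)^{\kq-1}\gamma_u\,du$ is a non-negative local martingale, hence a supermartingale admitting an a.s. limit as $s\to T$; consequently $y_s(\cX_s)^{\kq-1}$ converges, and since $y_s\to+\infty$ on $\cS$ one gets $\cX_s=\bigl(y_s(\cX_s)^{\kq-1}\bigr)^{\sq-1}y_s^{-(\sq-1)}\to 0$ on $\cS$. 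The same convergence of $y_s(\cX_s)^{\kq-1}$ is also what makes your terminal ``sandwich'' work in the case $\liminf_{s\to T}y_s=+\infty$ (where $\liminf y_s|\cX_s|^{\kq}=0=\xi|\cX_T|^{\kq}$ with the convention $0\times\infty=0$), the case $\liminf_{s\to T}y_s<\infty$ being handled by Fatou alone; note that only the one-sided inequality $y_t x^{\kq}\geq J(t,\cX)$ is needed there, the reverse one being supplied by the lower bound you already established. With this supermartingale step inserted, your proof is complete.
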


\subsection{The formulation under uncertainty without terminal constraint}

We work under the setting described in Section \ref{sect:setting}. We consider a $\cF_T$-Borel measurable random variable $\xi$ such that for any $\bP \in \cP_0$, $\xi$ is a.s. non-negative. We denote by $\cS$ the singular set $\{ \xi = +\infty\}$. We define the two Borel measurable functions
\begin{eqnarray*}
\eta: && (t, \om, a)  \in [0,T] \x \Om \x \Sp  \longrightarrow  \R_+^*, \\
\gamma: && (t, \om, a)  \in [0,T] \x \Om \x \Sp  \longrightarrow  \R_+.
\end{eqnarray*}
Here $\eta$ and $\gamma$ (and thus the generator of our BSDE) do not depend on the drift of $X$. This condition is sufficient to obtain an optimal control independent of the probability measure $\P$ (see Propositions \ref{prop:optim_unconst_pb} and \ref{prop:constr_pb} below). This hypothesis is similar to the setting in \cite{mato:poss:zhou:15}.

We define for simplicity
$$\widehat{\eta}_s:= \eta(s, X_{\cdot\wedge s},\widehat{a}_s) ~~\mbox{and}~~ \widehat{\gamma}_s:= \gamma(s, X_{\cdot\wedge s},\widehat{a}_s).$$
Finally we assume that there exists $\varrho > 1$ such that for any $(t,\omega) \in [0,T]\times \Omega$
\begin{equation}\label{eq:int_cond_eta}
\sup_{\bP \in \mathcal{P}(t,\omega)} \bE^\bP \int_t^T \left(  \frac 1{\widehat{\eta}_s} \right)^{\varrho(\sq-1)} ds <\infty.
\end{equation}
Our generator is 
$$f(t,\omega,y,a) = -  \frac{y |y|^{\sq-1}}{(\sq-1)(\eta(t,\omega,a))^{\sq-1}} + (\gamma(t,\omega,a) \wedge L)$$
and satisfies Condition {\bf (H)}: for any $(t,\omega,a,y,y')$
\begin{itemize}
\item {\bf H1.} $y\mapsto f(t,\omega,y,a)$ is continuous.
\item {\bf H2.} Monotonicity assumption: $f$ is non-increasing w.r.t. $y$
$$(f(t,\omega,y,a)-f(t,\omega,y',a))(y-y') \leq 0.$$
\item {\bf H4.} Growth assumption:
$$ |f(t,\omega,y,a) - f(t,\omega,0,a)| =  \frac{1}{(\sq-1)\eta(t,\omega,a)^{\sq-1}} |y|^\sq$$
together with \eqref{eq:int_cond_eta}.
\end{itemize}
Compared to {\bf H4} in our previous section, here 
$$\Psi(t,\omega,a)=\frac{1}{(\sq-1)\eta(t,\omega,a)^{\sq-1}}$$ 
and Assumption \eqref{eq:int_cond_eta} corresponds to Condition {\bf C1} on $\widehat \Psi$.
The terminal condition $\xi \wedge L$ and the process $\widehat{f}^{0}=(\widehat{f}^{0}_t= \widehat{\gamma}_t \wedge L, \ t\geq 0)$ are bounded. Hence {\bf C1} and {\bf C2} hold for any $\bsp > 1$. Hence Condition \eqref{eq:cond_int} becomes here $1 < p < \varrho$. From Propositions \ref{prop:uniq_2BSDE} and \ref{prop:2BSDE_existence}, we deduce that there exists a unique solution $(Y^L,Z^L,M^{L,\bP},K^{L,\bP})$ to the second order BSDE: for any $0 \leq t \leq T$ and any $\bP$
\begin{eqnarray}\nonumber
Y^L_t & = & (\xi \wedge L) - \int_t^T \frac{|Y^L_u|^{\sq-1}Y^L_u}{(\sq-1)(\widehat{\eta}_u)^{\sq-1}} du + \int_t^T (\widehat{\gamma}_u \wedge L) du \\  \label{eq:L_2BSDE}
&& - \left( \int_t^T Z^L_s dX^{c,\bP}_s\right)^{\bP} -\int_t^T dM^{L,\bP}_s + (K^{L,\bP}_T- K^{L,\bP}_t),\ \ \bP-a.s.,
\end{eqnarray}
such that:
\begin{itemize}
\item For any $p>1$, $Y^L$ belongs to $\bD^{p}_0(\bF^{\cP_0}_+)$.
\item For any $1<p<\varrho$, $(Z^L,M^{L,\bP},K^{L,\bP})$ is in $\bH^{p}_0(\bF^{\cP_0}_+) \times \bM^{p}_0((\bF^{\bP}_+)_{\bP\in \cP_0}) \times \bI^{p}_0((\bF^{\bP}_+)_{\bP\in \cP_0})$.
\item $K^{L,\bP}$ is a $\bP-a.s.$ non-decreasing process satisfying the minimality condition \eqref{eq:minim_cond_K_2BSDE}. 
\end{itemize}
Moreover we have the representation formula
\begin{equation}\label{eq:truncated_repres_formula}
Y^L_t = \underset{\bP' \in \cP(t,\bP,\bF_+)}{\mbox{esssup}^\bP} y^{L,\bP'}_t
\end{equation}
where $(y^{L,\bP},z^{L,\bP},m^{L,\bP})$ is the solution under $\bP$ of the BSDE
\begin{equation*}
dy^{L,\bP}_t = \frac{|y^{L,\bP}_t|^{\sq-1}y^{L,\bP}_t}{(\sq-1)(\widehat{\eta}_t)^{\sq-1}} dt -( \widehat{\gamma}_t \wedge L) dt +z^{L,\bP}_t dX^{c,\bP}_t + dm^{L,\bP}_t.
\end{equation*}
Note that by comparison principle for standard BSDEs (Lemma \ref{lem:comp_sol_gene_BSDE}), these solutions $y^{L,\bP}$ satisfy the inequality: $\bP$-a.s.
$$0 \leq y^{L,\bP}_t  \leq L (T+1), \quad \forall t\in [0,T].$$
Thus $\cP_0$-q.s.
$$0 \leq Y^L_t  \leq L(T+1)\quad \forall t\in [0,T].$$

First we define the following control sets:
\begin{itemize}
\item $\cA(t,x)$ is the set of processes $\cX=(\cX_s,\ 0\leq s \leq T)$ such that $\cX_s = x$ if $s \leq t$ and for any $\bP \in \cP_t$, $\bP-a.s.$, $\cX$ is absolutely continuous, that is: $\cX_s(\omega) = x + \int_t^s \alpha_u(\omega) du$ with $\int_t^T |\alpha_u(\omega)| du < +\infty$.
\item For a fixed $\bP \in \cP_t$, $\cA^\bP(t,x)$ is the set of processes $\cX=(\cX_s,\ 0\leq s \leq T)$ such that $\cX_s = x$ if $s \leq t$ and $\bP-a.s.$, $\cX$ is absolutely continuous, that is: $\cX_s(\omega) = x + \int_t^s \alpha_u(\omega) du$ with $\int_t^T |\alpha_u(\omega)| du < +\infty$.
\end{itemize}
The set $\cA^\bP(t,x)$ depends of $\bP$, whereas $\cA(t,x)$ depends only on the probability set $\cP_t$. Of course $\cA(t,x)$ is included in $\cA^\bP(t,x)$. Next for any $L \geq 0$ we define the following unconstrainted control problems
\begin{equation} \label{eq:UP}
J^L(t,x)= \underset{\cX\in \cA(t,x)}{\essinf}  \underset{\bP \in \mathcal{P}_t}{\esssup} \ \bE^\bP \left[  \int_t^T \left( \widehat{\eta}_s |\alpha_s|^{\kq} + (\widehat{\gamma}_s \wedge L) |\cX_s|^{\kq} \right) ds + (L\wedge \xi) |\cX_T|^{\kq} \bigg| \cF^+_t \right],
\end{equation}
together with 
$$I^L(t,x)= \underset{\bP \in \mathcal{P}_t}{\esssup}  \underset{\cX\in \cA(t,x)}{\essinf} \ \bE^\bP \left[  \int_t^T \left( \widehat{\eta}_s |\alpha_s|^{\kq} + (\widehat{\gamma}_s \wedge L) |\cX_s|^{\kq} \right) ds + (L\wedge \xi) |\cX_T|^{\kq} \bigg| \cF^+_t \right],$$
and
$$H^L(t,x)= \underset{\bP \in \mathcal{P}_t}{\esssup}  \underset{\cX\in \cA^\bP(t,x)}{\essinf} \ \bE^\bP \left[  \int_t^T \left(\widehat{\eta}_s |\alpha_s|^{\kq} + (\widehat{\gamma}_s \wedge L) |\cX_s|^{\kq} \right) ds + (L\wedge \xi) |\cX_T|^{\kq} \bigg| \cF^+_t \right].$$
Immediately $H^L(t,x) \leq I^L(t,x) \leq J^L(t,x)$. From the standard formulation (see Section \ref{sect:standard_form_liquid_pb}) we have
$$H^L(t,x) = x^{\kq} \ \underset{\bP \in \mathcal{P}_t}{\esssup} \ y^{L,\bP}_t= x^{\kq}  \ Y^L_t.$$

%

\begin{Lemma} \label{lem:optim_control}
For any $(t,x)$, $J^L(t,x) \leq H^L(t,x)$.
\end{Lemma}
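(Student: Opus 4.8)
The plan is to exhibit a single control $\cX^\ast \in \cA(t,x)$, independent of $\bP$, whose worst-case cost is dominated by $H^L(t,x)=x^\kq Y^L_t$; since $J^L(t,x)$ is an infimum over $\cA(t,x)$ of the essential supremum over $\bP$, producing this one control suffices. I would mimic the optimal feedback of Lemma~\ref{prop:optimal_control_uncons}, but driven by the aggregated $2$BSDE value $Y^L$ rather than the $\bP$-dependent $y^{L,\bP}$, namely
$$\cX^\ast_s = x - \int_t^s \left( \frac{Y^L_r}{\widehat\eta_r} \right)^{\sq-1} \cX^\ast_r\, dr, \quad s\in[t,T].$$
Because $Y^L$ is $\bF^{\cP_0}_+$-measurable and $\widehat\eta$ is universally defined through $\widehat a$, the resulting $\cX^\ast$ and its rate $\alpha^\ast_s = -(Y^L_s/\widehat\eta_s)^{\sq-1}\cX^\ast_s$ do not depend on $\bP$, so $\cX^\ast \in \cA(t,x)$ once $\int_t^T|\alpha^\ast_u|\,du<\infty$ is checked. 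The latter follows from $0\le Y^L \le L(T+1)$ together with \eqref{eq:int_cond_eta}, which also yields $0\le \cX^\ast_s\le x$; hence $\cX^\ast$ is bounded and, by $Y^L\ge 0$ and $x\ge 0$, one has $|\cX^\ast_s|^\kq=(\cX^\ast_s)^\kq$ and $|Y^L_s|^{\sq-1}Y^L_s=(Y^L_s)^\sq$.

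Next I would run a verification argument under each fixed $\bP\in\cP_t$. Applying It\^o's product formula to $\Phi_s := Y^L_s (\cX^\ast_s)^\kq$ (the covariation term vanishes since $\cX^\ast$ has finite variation), using the dynamics \eqref{eq:L_2BSDE} of $Y^L$ and the ODE satisfied by $\cX^\ast$, the H\"older conjugacy $(\kq-1)(\sq-1)=1$ --- equivalently $\tfrac{1}{\sq-1}=\kq-1$ and $(\sq-1)\kq=\sq$ --- makes the finite-variation drift of $\Phi$ collapse to minus the running cost:
$$d\Phi_s = -\Big(\widehat\eta_s|\alpha^\ast_s|^\kq + (\widehat\gamma_s\wedge L)(\cX^\ast_s)^\kq\Big)\,ds + (\cX^\ast_s)^\kq\big(Z^L_s\,dX^{c,\bP}_s + dM^{L,\bP}_s\big) - (\cX^\ast_s)^\kq\,dK^{L,\bP}_s.$$
This is the crux: the two $ds$-terms coming from $dY^L$ and from $d((\cX^\ast)^\kq)$ combine, via $\tfrac{1}{\sq-1}-\kq=-1$, into $-(Y^L_s)^\sq\widehat\eta_s^{-(\sq-1)}(\cX^\ast_s)^\kq$, which together with the $\gamma$-term reproduces exactly $\widehat\eta_s|\alpha^\ast_s|^\kq + (\widehat\gamma_s\wedge L)(\cX^\ast_s)^\kq$.

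Integrating on $[t,T]$ and taking $\bE^\bP[\,\cdot\mid\cF^+_t]$, with $\Phi_t = x^\kq Y^L_t$ and $\Phi_T=(\xi\wedge L)(\cX^\ast_T)^\kq$, the stochastic integrals drop by the martingale property and the non-decrease of $K^{L,\bP}$ gives $\int_t^T(\cX^\ast_s)^\kq\,dK^{L,\bP}_s\ge 0$. Hence
$$x^\kq Y^L_t \ge \bE^\bP\left[ (\xi\wedge L)(\cX^\ast_T)^\kq + \int_t^T\Big(\widehat\eta_s|\alpha^\ast_s|^\kq+(\widehat\gamma_s\wedge L)(\cX^\ast_s)^\kq\Big)ds \,\Big|\, \cF^+_t \right], \quad \bP\text{-a.s.}$$
The right-hand side is the conditional cost of $\cX^\ast$ under $\bP$; taking $\esssup$ over $\bP\in\cP_t$ and bounding the infimum over $\cA(t,x)$ by the value at the particular control $\cX^\ast$ yields $J^L(t,x)\le x^\kq Y^L_t = H^L(t,x)$, as claimed.

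The main obstacle is upgrading the two stochastic integrals from local to genuine martingales so that they vanish under $\bE^\bP[\,\cdot\mid\cF^+_t]$. Since $(\cX^\ast_s)^\kq\le x^\kq$ is bounded and $(Z^L,M^{L,\bP})\in\bH^p_0(\bF^{\bP}_+,\bP)\times\bM^p_0(\bF^{\bP}_+,\bP)$ for some $1<p<\varrho$, a standard localization followed by dominated convergence removes them; the same integrability (with $K^{L,\bP}\in\bI^p_0(\bF^{\bP}_+,\bP)$) guarantees that $\int_t^T(\cX^\ast_s)^\kq\,dK^{L,\bP}_s$ is integrable, so passing to the conditional expectation is legitimate.
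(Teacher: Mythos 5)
Your proposal is correct and follows essentially the same route as the paper: the feedback control $\cX^*$ driven by $Y^L$, It\^o's formula applied to $Y^L_s(\cX^*_s)^{\kq}$ so that the drift collapses to minus the running cost via the H\"older conjugacy, and the sign of $K^{L,\bP}$ to drop the non-decreasing term. Your extra remarks on localization and integrability of the stochastic integrals only make explicit what the paper leaves implicit.
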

\begin{proof}
For simplicity, we do not write the constant $L$ in this proof. Let us define 
$$\beta_s = - \left(Y_s/\widehat{\eta}_s\right)^{\sq-1},\qquad d\cX^*_s = \beta_s \cX^*_s ds=\alpha_s ds.$$ 
Let us apply the It\^o formula under the probability $\bP$:
\begin{eqnarray*}
d(Y_s (\cX^*_s)^{\kq}) & = & (\cX^*_{s})^{\kq} dY_s + Y_{s} d( (\cX^*_{s})^{\kq}) \\
& =& (\cX^*_{s})^{\kq}  \frac{Y^\sq_s}{(\sq-1)(\widehat{\eta}_s)^{\sq-1}} ds - (\widehat{\gamma}_s\wedge L) (\cX^*_s)^{\kq} ds + pY_s\beta_s (\cX^*_s)^{p} ds \\
& + &   (\cX^*_{s})^{\kq} Z_s dX^{c,\bP}_s +(\cX^*_{s})^{\kq} dM^\bP_s  -  (\cX^*_{s})^{\kq} dK^\bP_s \\
& = & - (\cX^*_{s})^{\kq}  \left( \frac{Y_s}{\widehat{\eta}_s} \right)^{\sq} \widehat{\eta}_s  ds - (\widehat{\gamma}_s \wedge L)(\cX^*_s)^{\kq} ds+   (\cX^*_{s})^{\kq} Z_s dX^{c,\bP}_s \\
& +& (\cX^*_{s})^{\kq} dM^\bP_s -  (\cX^*_{s})^{\kq} dK^\bP_s\\
& =& - \left(\cX^*_{s}  \frac{Y^{\sq-1}_s}{(\widehat{\eta}_s)^{\sq-1}} \right)^{\kq} \widehat{\eta}_s  ds - (\widehat{\gamma}_s \wedge L)(\cX^*_s)^{\kq} ds+   (\cX^*_{s})^{\kq} Z_s dX^{c,\bP}_s \\
& +& (\cX^*_{s})^{\kq} dM^\bP_s -  (\cX^*_{s})^{\kq} dK^\bP_s\\
& =& - \left[\widehat{\eta}_s \left( |\alpha_s| \right)^{\kq} + (\widehat{\gamma}_s\wedge L) (\cX^*_s)^{\kq} \right] ds+   (\cX^*_{s})^{\kq} Z_s dX^{c,\bP}_s \\
& +& (\cX^*_{s})^{\kq} dM^\bP_s -  (\cX^*_{s})^{\kq} dK^\bP_s. 
\end{eqnarray*} 
since $(\sq-1){\kq}=\sq$. 
Now integrate this from $t$ to $T$:
\begin{eqnarray*}
&& Y_T (\cX^*_T)^{\kq} - Y_t (\cX^*_t)^{\kq}  = (\xi \wedge L) (\cX^*_T)^{\kq} - Y_t x^{\kq}  \\
&&\qquad =  -\int_t^T \left[ \widehat{\eta}_s \left( \alpha_s \right)^{\kq} + (\widehat{\gamma}_s \wedge L) (\cX^*_s)^{\kq} \right] ds + \left( \int_t^T (\cX^*_{s})^{\kq} Z_s dX^{c,\bP}_s \right)^\bP\\
&& \qquad + \left( \int_t^T (\cX^*_{s})^{\kq} dM^\bP_s \right)^\bP- \int_t^T (\cX^*_{s})^{\kq} dK^\bP_s. 
\end{eqnarray*} 
And taking the conditional expectation w.r.t. $\bP$ 
\begin{eqnarray*}
\bE^\bP \left[ (\xi \wedge L) (\cX^*_T)^{\kq}  + \int_t^T \left[\widehat{\eta}_s \left( \alpha_s \right)^{\kq} +( \widehat{\gamma}_s \wedge L)(\cX^*_s)^{\kq} \right] ds  \bigg| \cF^+_t \right]  & =& Y_t x^{\kq} - \bE^\bP \left[ \int_t^T (\cX^*_{s})^{\kq} dK^\bP_s \bigg| \cF^+_t \right] \\
& \leq & Y_t x^{\kq}
\end{eqnarray*} 
since $K^\bP$ is non-decreasing. Therefore 
$$\underset{\bP \in \mathcal{P}_t}{\esssup} \ \bE^\bP \left[ (\xi \wedge L) (\cX^*_T)^{\kq}  + \int_t^T \left[\widehat{\eta}_s \left( \alpha_s \right)^{\kq} + (\widehat{\gamma}_s\wedge L) (\cX^*_s)^{\kq} \right] ds  \bigg| \cF^+_t \right]  \leq Y_t x^{\kq}.$$
Moreover the process $\cX^*$ is in $\cA(t,x)$: 
$$\cX^*_s = x - \int_t^s \left( \frac{Y_u}{\widehat{\eta}_u}\right)^{\sq-1} \cX^*_u du.$$
This implies that 
$$J(t,x) \leq  Y_t x^{\kq} = H(t,x).$$
\end{proof}

Therefore we deduce that $H^L(t,x) \leq I^L(t,x) \leq J^L(t,x)\leq H^L(t,x)$ and our first result:
\begin{Prop} \label{prop:optim_unconst_pb}
The unconstrainted problem \eqref{eq:UP} satisfies 
\begin{eqnarray*}
&& \underset{\cX\in \cA(t,x)}{\essinf}  \underset{\bP \in \mathcal{P}_t}{\esssup} \ \bE^\bP \left[  \int_t^T \left(\widehat{\eta}_s |\alpha_s|^{\kq} + \widehat{\gamma}_s |\cX_s|^{\kq} \right) ds + (\xi \wedge L) |\cX_T|^{\kq} \bigg| \cF^+_t \right] \\
&&  \quad =  \underset{\bP \in \mathcal{P}_t}{\esssup}  \underset{\cX\in \cA^{\bP}(t,x)}{\essinf} \ \bE^\bP \left[  \int_t^T \left( \widehat{\eta}_s |\alpha_s|^{\kq} + \widehat{\gamma}_s |\cX_s|^{\kq} \right) ds + (\xi \wedge L) |\cX_T|^{\kq} \bigg| \cF^+_t \right]
\end{eqnarray*}
and the solution of the 2BSDE \eqref{eq:L_2BSDE}, denoted by $Y^L$, gives the optimal process $\cX^{*,L}$:
$$d\cX^{*,L}_s = \left[ - \left(Y^L_s/\widehat{\eta}_s\right)^{\sq-1}\cX^{*,L}_s \right] ds, \quad (\sq-1)({\kq}-1)=1.$$
\end{Prop}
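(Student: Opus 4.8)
The plan is to collapse into a single value the chain of inequalities that has already been assembled. Recall that we have $H^L(t,x)\leq I^L(t,x)\leq J^L(t,x)$: the first inequality comes from the inclusion $\cA(t,x)\subset\cA^\bP(t,x)$ (minimizing over a larger class lowers the infimum, and this is preserved by the outer $\esssup$), while the second is the classical min-max inequality $\esssup_\bP\essinf_\cX\leq\essinf_\cX\esssup_\bP$. The reverse inequality $J^L(t,x)\leq H^L(t,x)$ is exactly the content of Lemma \ref{lem:optim_control}. Hence
$$H^L(t,x)\leq I^L(t,x)\leq J^L(t,x)\leq H^L(t,x),$$
so all of these quantities coincide. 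Combined with the identity $H^L(t,x)=x^{\kq}\esssup_\bP y^{L,\bP}_t=x^{\kq}Y^L_t$ already recorded above (obtained from the fixed-$\bP$ theory of Section \ref{sect:standard_form_liquid_pb}, Lemma \ref{prop:optimal_control_uncons}, together with the representation formula \eqref{eq:truncated_repres_formula}), this yields $J^L(t,x)=H^L(t,x)=x^{\kq}Y^L_t$, which is precisely the asserted equality between the inf-sup and the sup-inf formulations.

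For the optimal control I would take $\cX^{*,L}$ to be the process constructed in the proof of Lemma \ref{lem:optim_control}, namely the solution of the linear feedback dynamics $d\cX^{*,L}_s=-\left(Y^L_s/\widehat\eta_s\right)^{\sq-1}\cX^{*,L}_s\,ds$ with $\cX^{*,L}_t=x$. The It\^o computation in that lemma shows that along $\cX^{*,L}$, for every $\bP\in\cP_t$ the conditional cost equals $x^{\kq}Y^L_t-\bE^\bP\big[\int_t^T(\cX^{*,L}_s)^{\kq}dK^\bP_s\mid\cF^+_t\big]\leq x^{\kq}Y^L_t$, the non-decrease of $K^\bP$ giving the inequality. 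Taking $\esssup_\bP$, the robust cost along $\cX^{*,L}$ is at most $x^{\kq}Y^L_t=J^L(t,x)$; since this common value is the infimum, the control $\cX^{*,L}$ attains it and is therefore optimal in the inf-sup problem.

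I expect no serious obstacle here, as all the analytic work is already contained in Lemma \ref{lem:optim_control} and in the fixed-$\bP$ results of Section \ref{sect:standard_form_liquid_pb}; the proposition is essentially their assembly. The single point needing care is that $\cX^{*,L}$ must be admissible simultaneously under \emph{every} $\bP\in\cP_t$, i.e.\ lie in the common class $\cA(t,x)$ rather than in some $\bP$-dependent $\cA^\bP(t,x)$ (otherwise the inf-sup value $J^L$ could not be attained by a single strategy). This is exactly why $\widehat\eta$ and $\widehat\gamma$ were assumed not to depend on the drift of $X$: the feedback $-(Y^L/\widehat\eta)^{\sq-1}$ is then a single $\bF^{\cP_0}$-adapted process, defining one path-valued $\cX^{*,L}$ valid $\cP_0$-q.s., with the bound $0\leq Y^L\leq L(T+1)$ ensuring $\cP_0$-q.s.\ integrability of the drift.
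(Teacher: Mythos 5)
Your proof is correct and follows essentially the same route as the paper: the proposition is obtained by closing the chain $H^L\leq I^L\leq J^L\leq H^L$, where the last inequality and the optimal feedback control $\cX^{*,L}$ come from Lemma \ref{lem:optim_control}, and the common value is identified with $x^{\kq}Y^L_t$ via the representation formula. Your added remark on the $\bP$-independence of the feedback and the admissibility of $\cX^{*,L}$ in $\cA(t,x)$ matches the corresponding observation made inside the paper's proof of that lemma.
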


\subsection{The constrained problem under uncertainty.}  

We denote by $\cA_0(t,x)$ the set of admissible controls $\cX \in \cA(t,x)$ such that $\cX_T\1_\cS=0, \ \mathcal{P}_t$-q.s. ($\mathcal{P}_t$-q.s means  $\bP-a.s.\ \forall \bP\in\mathcal{P}_t$) and  $\cA^\bP_0(t,x)$ the set of admissible controls $\cX \in \cA^\bP(t,x)$ such that $\cX_T\1_\cS=0 \ \bP-a.s.$ Now consider 
\begin{equation}\label{eq:control_constraint}
J(t,x)= \underset{\cX\in \cA_0(t,x)}{\essinf}  \underset{\bP \in \mathcal{P}_t}{\esssup} \ \bE^\bP \left[  \int_t^T \left( \widehat{\eta}_s |\alpha_s|^{\kq} + \widehat{\gamma}_s  |\cX_s|^{\kq} \right) ds + \xi |\cX_T|^{\kq} \bigg| \cF^+_t \right].
\end{equation}
Again we use the convention that $0 \times \infty = 0$. As mentioned for the standard formulation, a left-continuity condition is imposed on the underlying filtration to have the terminal condition \eqref{eq:terminal_cond}\footnote{This technical condition can be avoided if $\xi$ is $\cF_{T-}$-measurable (see for example \cite{bank:voss:16}).}. In our present setting we add the next assumption on our set of probability measures $\cP^W_t$:
\begin{itemize}
\item \textbf{Left-continuity condition:} for any probability measure $\bP \in \cP^W_t$, the filtration $\bF^\bP_+$ is left continuous at time $T$.
\end{itemize}
This hypothesis implies that a martingale cannot have a jump at time $T$. For example this assumption holds if $\cP = \{\bP^{a}, \ a \in \mathbb{A}\}$ is the set of all probability measures $\bP^a$ given by:
$$\bP^a = \bP_0\circ(X^a)^{-1},\quad X^a_t = \int_0^t a^{\frac{1}{2}}_s dX_s $$
for all processes $a \in \mathbb{A}$ of the form
$$a = \sum_{n=0}^{\infty}\sum_{i=1}^\infty a^n_i \1_{E^n_i} \1_{[\tau_n,\tau_{n+1})},$$
where $(a^n_i)_{i,n} \in \mathbb{A}_0$, $(\tau_n)_n$ is a nondecreasing sequence of stopping times with $\tau_0=0$ and
\begin{itemize}
\item $\inf\{n, \ \tau_n=+\infty\} < +\infty$, $\tau_n < \tau_{n+1}$ whenever $\tau_n < +\infty$ and each $\tau_n$ takes at most countably many values,
\item for each $n$, $\{E^n_i, \ i\geq 1\} \subset \cF_{\tau_n}$ forms a partition of $\Omega$.
\end{itemize}
$\mathbb{A}_0$ is the class of all deterministic mappings such that $0 < \underline{a} \leq a_t$ for any $t \geq 0$ (see \cite{sone:touz:zhan:11b}, Section 4.4). Every $\bP^a$, $a \in \mathbb{A}_0$, satisfies the martingale representation property. Then from \cite[Proposition A.1]{kazi:poss:zhou:15}, every $\bP \in \cP$ verifies this property too. Thereby any martingale is continuous, which implies the required argument for the filtration (see \cite[Proposition 25.19]{kall:02} for example).

\vspace{0.5cm}
For $L\leq L'$ and any $\bP \in \cP_0$, we have $\bP-a.s.$ for any $t\in [0,T]$
$$y^{L,\bP}_t \leq y^{L',\bP}_t \leq Y^{L'}_t.$$
Hence $\cP_0$-q.s., $Y^{L}_t \leq Y^{L'}_t$ for $t\in [0,T]$ (see also the comparison result \cite[Theorem 4.3]{poss:tan:zhou:15}). 

Let us now assume that there exists $\ell > 1$ and $\kappa \in (1,\ell)$ such that for any $(t,\omega)$
\begin{equation}\label{eq:int_cond_eta_gamma_sing_1}
\sup_{\bP \in \mathcal{P}(t,\omega)} \bE^\bP \left[ \int_t^T \left[ \widehat{\eta}_s + (T-s)^{\kq}\ \widehat{\gamma}_s \right]^\ell ds\right]  <\infty,
\end{equation}
and
\begin{equation}\label{eq:int_cond_eta_gamma_sing_2}
\sup_{\bP \in \mathcal{P}_0} \bE^\bP \left[ \underset{0\leq t \leq T}{\mbox{ess sup}}^\bP \left( \bE^\bP \left[ \int_0^T \left[ \widehat{\eta}_s + (T-s)^{\kq}\ \widehat{\gamma}_s \right]^\kappa ds \bigg| \cF_t^+ \right]\right)^{\frac{\ell}{\kappa}} \right]  <\infty.
\end{equation}

\begin{Lemma}[{\it A priori} estimate]
There exists $U\in \bD^\ell_0(\bF^{\cP_0}_+)$ such that for any $0\leq t \leq T$, $\cP_0$-q.s.
\begin{equation}  \label{eq:sing_apriori_estim}
0\leq Y^{L}_t \leq \frac{1}{(T-t)^{\kq}} U_t.
\end{equation}
Let us emphasize that the right-hand side does not depend on $L$ and is finite on $[0,T)$. 
\end{Lemma}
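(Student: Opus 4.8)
The plan is to transport the single-probability a priori bound \eqref{eq:a_priori_estimate_Y_L} through the representation formula \eqref{eq:truncated_repres_formula}, and then to recognise the resulting dominating process as a nonlinear conditional expectation whose $\bD^\ell_0$-norm is controlled by \eqref{eq:int_cond_eta_gamma_sing_1}--\eqref{eq:int_cond_eta_gamma_sing_2}.

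Fix $\bP\in\cP_0$ and $\bP'\in\cP(t,\bP,\bF_+)$. Under the single measure $\bP'$ the triple $(y^{L,\bP'},z^{L,\bP'},m^{L,\bP'})$ solves precisely the truncated BSDE \eqref{eq:truncated_bsde} studied in \cite{anki:jean:krus:13,krus:popi:15} (with $\eta,\gamma,W$ replaced by $\widehat\eta,\widehat\gamma,X^{c,\bP'}$), so estimate \eqref{eq:a_priori_estimate_Y_L} applies; using that the integrand is non-negative, whence $\int_t^T(\cdots)\,ds\le\int_0^T(\cdots)\,ds$, it gives $\bP$-a.s.
$$0\le y^{L,\bP'}_t\le\frac{1}{(T-t)^{\kq}}\,\bE^{\bP'}\!\left[\Phi\,\big|\,\cF_t^+\right],\qquad \Phi:=\int_0^T\big(\widehat\eta_s+(T-s)^{\kq}\widehat\gamma_s\big)\,ds.$$
Taking the essential supremum over $\bP'$ and invoking \eqref{eq:truncated_repres_formula} yields $0\le Y^L_t\le(T-t)^{-\kq}U_t$ with
$$U_t:=\underset{\bP'\in\cP(t,\bP,\bF_+)}{\esssup^\bP}\bE^{\bP'}\!\left[\Phi\,\big|\,\cF_t^+\right].$$
Since neither $\widehat\eta$ nor $\widehat\gamma$ depends on $L$, the process $U$ is $L$-independent, which is the asserted uniformity.

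It remains to prove $U\in\bD^\ell_0(\bF^{\cP_0}_+)$. The family $(U^\bP)_{\bP}$ is consistent under Assumption \ref{assum:main}, and the path-regularization arguments of Section \ref{sssect:path_regul} (here in the simplest case of the nonlinear expectation of the fixed terminal variable $\Phi$ with null generator) provide an $\bF^{\cP_0+}$-measurable \cad aggregator to which the maximal-inequality machinery of \cite{sone:touz:zhan:12,poss:tan:zhou:15} applies. To control its norm I would mimic the derivation of \eqref{eq:integrability_widehat_Y}: since $\kappa<\ell$, Jensen gives $\Phi^\kappa\le T^{\kappa-1}\int_0^T(\cdots)^\kappa\,ds$; splitting this integral at $t$, and using $\bP'=\bP$ on $\cF_t^+$, separates an $\cF_t$-measurable past term from the future term $\bE^{\bP'}[\int_t^T(\cdots)^\kappa\,ds\,|\,\cF_t^+]$. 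By stability under conditioning (Assumption \ref{assum:main}(ii)) the latter equals $\bE^{\bP'_\omega}[\int_t^T(\cdots)^\kappa\,ds]$ with $\bP'_\omega\in\cP(t,\omega)$, which by Jensen and \eqref{eq:int_cond_eta_gamma_sing_1} is controlled pointwise; the $\cF_t$-measurable past term, together with the running supremum in $t$ and the outer $\bE^\bP$, is exactly the finite quantity \eqref{eq:int_cond_eta_gamma_sing_2}. Combining these yields $\sup_{\bP\in\cP_0}\bE^\bP[\sup_{0\le t\le T}U_t^\ell]\le C<\infty$, with $C$ independent of $L$. Finally $U\in\bD^\ell_0$ forces $U_t<\infty$ $\cP_0$-q.s. for each $t$, so $(T-t)^{-\kq}U_t$ is finite on $[0,T)$, which completes the proof.

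The main obstacle is this $\bD^\ell_0$ maximal estimate for $U$, namely controlling the running supremum of an $\esssup^\bP_{\bP'}$ (hence nonlinear) conditional expectation. The nonlinearity prevents a direct appeal to a classical Doob inequality and forces one to split the cost integral at the current time, combining the {\bf C1}-type bound \eqref{eq:int_cond_eta_gamma_sing_1} (for the future, nonlinear, contribution) with the {\bf C2}-type bound \eqref{eq:int_cond_eta_gamma_sing_2} (for the past contribution and the maximal term); the slack $\kappa<\ell$ is essential for the Jensen and Doob steps.
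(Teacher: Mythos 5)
Your first half coincides with the paper's: apply the pointwise bound \eqref{eq:a_priori_estimate_Y_L} under each $\bP'\in\cP(t,\bP,\bF_+)$ and pass to the essential supremum via the representation formula \eqref{eq:truncated_repres_formula}, which immediately gives the $L$-independence. Where you diverge is in how the dominating process is identified and how its membership in $\bD^\ell_0(\bF^{\cP_0}_+)$ is obtained. The paper does \emph{not} enlarge $\int_t^T$ to $\int_0^T$; it keeps $u^{\bP'}_t=\bE^{\bP'}[\int_t^T(\widehat\eta_s+(T-s)^{\kq}\widehat\gamma_s)\,ds\,|\,\cF_t^+]$ and observes that this is exactly the first component of a BSDE with zero terminal condition and a generator that does not depend on $(y,z)$ --- hence trivially Lipschitz --- so that \cite[Theorem 4.1]{poss:tan:zhou:15} applies verbatim under \eqref{eq:int_cond_eta_gamma_sing_1}--\eqref{eq:int_cond_eta_gamma_sing_2} (which are precisely the {\bf C1}/{\bf C2} hypotheses of that theorem for this data) and delivers, in one stroke, the c\`adl\`ag aggregated solution $U$, the representation $U_t=\esssup^{\bP}_{\bP'}u^{\bP'}_t$, and the bound $U\in\bD^\ell_0(\bF^{\cP_0}_+)$. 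You instead set $U_t=\esssup^\bP_{\bP'}\bE^{\bP'}[\Phi\,|\,\cF_t^+]$ with the fixed terminal variable $\Phi=\int_0^T(\cdots)ds$ and propose to rebuild the path regularization and the $\bD^\ell_0$ maximal inequality by hand; this is workable (your splitting at time $t$ plus Jensen with the slack $\kappa<\ell$ is indeed how such estimates are proved, and $\Phi\in\bL^{\ell,\kappa}_0$ does follow from \eqref{eq:int_cond_eta_gamma_sing_2}), but it amounts to re-deriving a special case of the Lipschitz 2BSDE existence theorem, and that derivation is the one part of your argument left as a sketch rather than carried out. The lesson from the paper's proof is that no new analysis is needed here: once the generator is recognized as $(y,z)$-free, the entire second half is a citation.
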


\begin{proof}
The estimate \eqref{eq:a_priori_estimate_Y_L} gives for any $\bP \in \cP_0$ 
$$0\leq y^{L,\bP}_t \leq  \frac{1}{(T-t)^{\kq}} \bE^\bP \left[ \ \int_t^{T} \left( \widehat{\eta}_s + (T-s)^{\kq} \widehat{\gamma}_s \right) ds \bigg| \cF^+_t\right] =   \frac{1}{(T-t)^{\kq}} \ u^{\bP}_t .$$
The process $(u^\bP,v^\bP,n^\bP)$ is the solution of the BSDE
$$u^\bP_t = \int_t^{T} \left( \widehat{\eta}_s + (T-s)^{\kq} \widehat{\gamma}_s \right) ds -\left(  \int_t^T v^\bP_s dX^{c,\bP}_s\right)^\bP - \int_t^T dn^\bP_s.$$
Then using \eqref{eq:int_cond_eta_gamma_sing_1}, \eqref{eq:int_cond_eta_gamma_sing_2} and \cite[Theorem 4.1]{poss:tan:zhou:15}, there exists a unique solution $(U,V,\mathcal N^\bP, \mathcal{K}^\bP)$ to the 2BSDE:
$$U_t = \int_t^{T} \left( \widehat{\eta}_s + (T-s)^{\kq} \widehat{\gamma}_s \right) ds  - \left( \int_t^T V_s dX^{c,\bP}_s\right)^\bP - \int_t^T d\mathcal N^\bP_s+ (\mathcal{K}^\bP_T - \mathcal{K}^\bP_t),$$
such that 
$U \in \bD^\ell_0(\bF^{\cP_0}_+)$ and $(V,\mathcal N^\bP, \mathcal{K}^\bP)$ is in $\bH^\ell_0(\bF^{\cP_0}_+) \times \bM^\ell_0((\bF^{\bP}_+)_{\bP\in \cP_0}) \times \bI^\ell_0((\bF^{\bP}_+)_{\bP\in \cP_0})$. Moreover for any $\bP \in \cP_0$ and any $t \in [0,T]$, we have the representation formula:
$$\underset{\bP' \in \cP(t,\bP,\bF_+)}{\mbox{esssup}^\bP} u^{\bP'}_t = U_t,\quad \bP-a.s.$$ 
Thus we obtain the desired {\it a priori} estimate since
$$Y^L_t =\underset{\bP' \in \cP(t,\bP,\bF_+)}{\mbox{esssup}^\bP} y^{L,\bP'}_t \leq \underset{\bP' \in \cP(t,\bP,\bF_+)}{\mbox{esssup}^\bP} u^{\bP'}_t = U_t.$$
This achieves the proof of the Lemma.
\end{proof}

\begin{Lemma}
For any $\eps > 0$, the sequence $(Y^L,Z^L,M^{L,\bP},K^{L,\bP})$ converges, when $L$ goes to $+\infty$, to $(Y,Z,M^\bP,K^\bP)$ in the space $\bD_0^\ell(\bF^{\cP_0}_+)\times \bH^\ell_0(\bF^{\cP_0}_+) \times \bM^\ell_0((\bF^{\bP}_+)_{\bP\in \cP_0}) \times \bI^\ell_0((\bF^{\bP}_+)_{\bP\in \cP_0})$ on $[0,T-\eps]$, which means that all processes are restricted on this time interval. Moreover $(Y,Z,M^\bP,K^\bP)$ satisfies the dynamics: for any $\bP \in \cP_0$, and any $0\leq s \leq t < T$:
\begin{equation} \label{eq:2BSDE}
Y_s = Y_t - \int_s^t  \frac{Y^\sq_u}{(\sq-1)(\widehat{\eta}_u)^{\sq-1}} du + \int_s^t \widehat{\gamma}_u du - \left( \int_s^t Z_u dX^{c,\bP}_u\right)^\bP -\int_s^t dM^{\bP}_u + K^\bP_t - K^\bP_s.
\end{equation}
Finally $Y$ satisfies the representation property: for any $t < T$ and any $\bP \in \cP_0$, 
$$Y_t = \underset{\bP' \in \cP(t,\bP,\bF_+)}{\mbox{{\rm esssup}}^\bP} y^{\bP'}_t, \quad \bP-a.s.$$
\end{Lemma}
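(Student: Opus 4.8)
The plan is to construct the limit by a monotone-convergence argument on $Y^L$, upgrade it to norm convergence on each interval $[0,T-\eps]$ using the a priori and stability estimates for 2BSDEs, and finally pass to the limit in the dynamics. First I would set $Y_t := \sup_L Y^L_t = \lim_{L\to\infty} Y^L_t$, which is legitimate since $L \mapsto Y^L_t$ is non-decreasing $\cP_0$-q.s. The $L$-independent a priori estimate \eqref{eq:sing_apriori_estim} then gives $0 \le Y_t \le (T-t)^{-\kq} U_t$ with $U \in \bD^\ell_0(\bF^{\cP_0}_+)$, so $Y$ is finite $\cP_0$-q.s. on $[0,T)$ and belongs to $\bD^\ell_0(\bF^{\cP_0}_+)$ once restricted to any $[0,T-\eps]$.

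Next I would establish the representation property. For a fixed $\bP \in \cP_0$, the truncation procedure recalled in Section \ref{sect:standard_form_liquid_pb} (see Lemma \ref{prop:optimal_control_uncons}) shows that $y^{L,\bP}_t \uparrow y^\bP_t$, the minimal super-solution of the singular BSDE under $\bP$. I would then pass the monotone limit through the essential supremum in \eqref{eq:truncated_repres_formula}: the bound $\esssup^{\bP}_{\bP'} y^{L,\bP'}_t \le \esssup^{\bP}_{\bP'} y^{\bP'}_t$ is immediate, while for each $\bP'$ one has $y^{\bP'}_t = \lim_L y^{L,\bP'}_t \le \lim_L Y^L_t = Y_t$, and taking the essential supremum over $\bP'$ gives the reverse inequality. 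Hence $Y_t = \esssup^{\bP}_{\bP' \in \cP(t,\bP,\bF_+)} y^{\bP'}_t$, which is the claimed representation.

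For the norm convergence I would work on a fixed $[0,T-\eps]$. By the representation formula \eqref{eq:representation_formula_2BSDE}, the restriction of $(Y^L,Z^L,M^{L,\bP},K^{L,\bP})$ solves the 2BSDE \eqref{eq:L_2BSDE} on $[0,T-\eps]$ with terminal value $Y^L_{T-\eps}$, which is dominated by $\eps^{-\kq} U_{T-\eps}$ uniformly in $L$. The a priori estimate \eqref{eq:estimtes_2BSDE_sol} (here $L_2 = 0$, since the generator has no $z$-dependence, and \eqref{eq:int_cond_eta_gamma_sing_1} controls the generator data) bounds the sequence uniformly in $\bD^\ell_0 \times \bH^\ell_0 \times \bM^\ell_0 \times \bI^\ell_0$ on $[0,T-\eps]$. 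To obtain a Cauchy sequence I would invoke the stability result, the analogue of \cite[Theorem 4.5]{poss:tan:zhou:15} valid in our monotone setting. The key simplification is that $f^L - f^{L'}$ is the $y$-independent shift $\widehat\gamma \wedge L - \widehat\gamma \wedge L'$, so the stability estimate controls $\|Y^L - Y^{L'}\|_{\bD^\ell} + \|Z^L - Z^{L'}\|_{\bH^\ell} + \sup_\bP \|M^{L,\bP} - M^{L',\bP}\|_{\bM^\ell} + \sup_\bP \|K^{L,\bP} - K^{L',\bP}\|_{\bI^\ell}$ by the terminal difference $Y^L_{T-\eps} - Y^{L'}_{T-\eps}$ together with the norm of $\int_0^{T-\eps} |\widehat\gamma \wedge L - \widehat\gamma \wedge L'|\, du$; both tend to $0$ as $L,L' \to \infty$ by dominated convergence, using $Y^L_{T-\eps} \uparrow Y_{T-\eps}$ dominated by $\eps^{-\kq} U_{T-\eps}$ and the integrability \eqref{eq:int_cond_eta_gamma_sing_1}. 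This identifies the limits $(Z,M^\bP,K^\bP)$, with $Z$ aggregated as in Lemma \ref{lem:regularity_estim}.

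Finally, I would fix $0 \le s \le t < T$, choose $\eps$ with $t \le T-\eps$, and pass to the limit in \eqref{eq:L_2BSDE} read on $[s,t]$: the stochastic integral, martingale and non-decreasing terms converge from the $\bH^\ell$, $\bM^\ell$ and $\bI^\ell$ convergences; the drift $\widehat\gamma \wedge L \uparrow \widehat\gamma$ with integrable limit by \eqref{eq:int_cond_eta_gamma_sing_1}; and the nonlinear term converges, since $0 \le Y^L_u \le Y_u \le \eps^{-\kq} U_u$ on $[0,T-\eps]$ and $\widehat\eta^{-(\sq-1)}$ is integrable by \eqref{eq:int_cond_eta}, so that $(Y^L_u)^\sq (\widehat\eta_u)^{-(\sq-1)} \to Y_u^\sq (\widehat\eta_u)^{-(\sq-1)}$ by dominated convergence. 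This yields \eqref{eq:2BSDE}, and since $\eps > 0$ is arbitrary the identity holds for all $0 \le s \le t < T$. The hard part will be the stability step: because the driver is only monotone (not Lipschitz) in $y$, one must use the monotone version of the 2BSDE stability estimate, exploiting crucially that the difference of drivers is independent of $y$ so that the linearization with the multiplicative weight $\Lambda^{\bP'}$ used in Proposition \ref{prop:uniq_2BSDE} passes to the limit uniformly in $L$.
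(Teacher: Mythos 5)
Your proposal is correct and follows essentially the same route as the paper: monotone limit of $Y^L$ dominated by the $L$-uniform bound \eqref{eq:sing_apriori_estim}, the 2BSDE stability estimate applied on $[0,T-\eps]$ with the $y$-independent driver difference $\widehat\gamma\wedge L-\widehat\gamma\wedge L'$ and the terminal difference $Y^L_{T-\eps}-Y^{L'}_{T-\eps}$, the sandwich argument $y^{L,\bP'}\le Y^L\le Y$ and $Y^L=\esssup y^{L,\bP'}\le \esssup y^{\bP'}$ for the representation, and passage to the limit in the dynamics. The only cosmetic difference is that the paper obtains convergence of $N^{L,\bP}=M^{L,\bP}-K^{L,\bP}$ in $\bM^\ell$ and then decomposes the limit into $M^\bP-K^\bP$, rather than controlling $M^{L,\bP}$ and $K^{L,\bP}$ separately as you state.
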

\begin{proof}
Fix $\eps > 0$ and define
\begin{eqnarray*}
\psi^{\ell,\eps}_{L,L'} & := & \sup_{\bP\in \cP_0} \bE^{\bP} \left[  \int_0^{T-\eps} |(\widehat{\gamma}_s  \wedge L) - (\widehat{\gamma}_s  \wedge L') |^\ell ds \right]\\
\phi^{\ell,\kappa,\eps}_{L,L'} & := & \sup_{\bP\in \cP_0} \bE^{\bP} \left[ \underset{0\leq t \leq T-\eps}{\esssup^\bP} \ \bE^\bP \left[ \left( \int_0^{T-\eps} |(\widehat{\gamma}^\P_s  \wedge L) - (\widehat{\gamma}^\P_s  \wedge L') |^\kappa ds \right)^{\frac{\ell}{\kappa}} \bigg| \cF^+_t \right]\right].
\end{eqnarray*}
From our conditions \eqref{eq:int_cond_eta_gamma_sing_1} and \eqref{eq:int_cond_eta_gamma_sing_2} on $\widehat \gamma$, $ \psi^{\ell,\eps}_{L,L'} $ and $\phi^{\ell,\kappa,\eps}_{L,L'}$ tend to zero when $L$ and $L'$ go to $+\infty$. From \cite[Theorem 4.5]{poss:tan:zhou:15} (stability result for 2BSDE), for any $L$ and $L'$ we have on $[0,T-\eps]$
$$\|Y^L - Y^{L'}\|^\ell_{\bD_0^\ell} \leq C\left[ \|Y^L_{T-\eps} - Y^{L'}_{T-\eps} \|^\ell_{\bL_0^\ell} + \psi^{\ell,\eps}_{L,L'} \right].$$
From the uniform w.r.t. $L$ and $\cP_0$-q.s. bound \eqref{eq:sing_apriori_estim}, and from the monotonicity of the sequence $Y^L$, we deduce that 
$$\|Y^L_{T-\eps} - Y^{L'}_{T-\eps} \|^\ell_{\bL_0^\ell}$$
tends to zero when $L$ and $L'$ go to $+\infty$. Hence there exists $Y \in \bD_0^\ell(\bF^{\cP_0}_+)$ defined on $[0,T-\eps]$ such that on $[0,T-\eps]$, $Y^L$ converges strongly to  $Y$. From \eqref{eq:sing_apriori_estim}, we have: $\cP_0$-q.s. for $t \in [0,T)$
$$0\leq Y_t \leq \frac{1}{(T-t)^{\kq}} U_t.$$

By the representation of $Y^L$, for any $t \in [0,T)$, any $\bP\in \cP_0$ and any $\bP' \in \cP(t,\bP,\bF_+)$
$$ y^{L,\bP'}_t \leq \underset{\bP' \in \cP(t,\bP,\bF_+)}{\mbox{esssup}^\bP} y^{L,\bP'}_t  = Y^{L}_t \leq Y_t, \ \bP-a.s.$$
The (minimal) supersolution $y^{\bP'}$ of the singular BSDE \eqref{eq:bsde} is obtained as the increasing limit of $y^{L,\bP'}$. Thus for any $\bP'$
$$ y^{L,\bP'}_t \leq y^{\bP'}_t \leq Y_t \Rightarrow \underset{\bP' \in \cP(t,\bP,\bF_+)}{\mbox{esssup}^\bP} y^{\bP'}_t \leq Y_t , \ \bP-a.s.$$
Moreover for any $L$
$$Y^{L}_t  = \underset{\bP' \in \cP(t,\bP,\bF_+)}{\mbox{esssup}^\bP}  y^{L,\bP'}_t \leq \underset{\bP' \in \cP(t,\bP,\bF_+)}{\mbox{esssup}^\bP}  y^{\bP'}_t \Rightarrow  Y_t \leq \underset{\bP' \in \cP(t,\bP,\bF_+)}{\mbox{esssup}^\bP}  y^{\bP'}_t.$$
We deduce the representation formula for $Y$. 

Now from the stability property for 2BSDE (\cite[Theorem 4.5]{poss:tan:zhou:15}), if $N^{L,\bP} = M^{L,\bP} -K^{L,\bP}$, then on $[0,T-\eps]$
\begin{eqnarray*}
&& \|Z^L - Z^{L'}\|^\ell_{\bH_0^\ell} + \sup_{\bP \in \cP_0} \bE^\bP \left[ [N^{L,\bP}-N^{L',\bP}]_{T-\eps}^{\frac{\ell}{2}} \right] \\
&& \quad \leq C\left[ \|Y^L_{T-\eps} - Y^{L'}_{T-\eps} \|^\ell_{\bL_0^\ell} + \|Y^L_{T-\eps} - Y^{L'}_{T-\eps} \|^{\frac{\ell}{2}\wedge (\ell-1)}_{\bL_0^\ell} + \phi^{\ell,\kappa,\eps}_{L,L'} + (\phi^{\ell,\kappa,\eps}_{L,L'})^{\frac{\ell}{2}\wedge (\ell-1)} \right].
\end{eqnarray*}
Thereby the sequences $Z^L$ and $N^{L,\bP}$ have a limit $Z$ and $N^\bP$. The process $(Y,Z,N^\bP)$ satisfies the dynamics: for any $\bP \in \cP_0$, and any $0\leq s \leq t < T$:
\begin{equation*} 
Y_s = Y_t - \int_s^t  \frac{Y^\sq_u}{(\sq-1)(\widehat{\eta}_u)^{\sq-1}} du + \int_s^t \widehat{\gamma}_u du - \left( \int_s^t Z_u dX^{c,\bP}_u\right)^\bP -\int_s^t dN^{\bP}_u.
\end{equation*}
Then we decompose the process $N^\bP_t = M^\bP_t - K^\bP_t$ and we check that $(Z,M^\bP,K^\bP)$ is the desired space and that \eqref{eq:2BSDE} holds. 
\end{proof}

Now we come to the main result concerning singular 2BSDEs. 

\begin{Prop} \label{prop:sing_2BSDE}
Under Conditions \eqref{eq:int_cond_eta_gamma_sing_1} and \eqref{eq:int_cond_eta_gamma_sing_2}, the 2BSDE 
\begin{eqnarray*}\nonumber
Y_t & = & \xi - \int_t^T \frac{(Y_u)^\sq}{(\sq-1)(\widehat{\eta}_u)^{\sq-1}} du + \int_t^T (\widehat{\gamma}_u ) du \\ 
&& -\left(  \int_t^T Z_s dX^{c,\bP}_s \right)^{\bP}-\int_t^T dM^{\bP}_s + (K^{\bP}_T- K^{\bP}_t),\ \ \bP-a.s.
\end{eqnarray*}
for any $0 \leq t \leq T$ and any $\bP\in \cP_0$, with the singular terminal condition $\xi$, admits a non-negative super-solution $(Y,Z,M^\bP,K^\bP)$ satisfying:
\begin{itemize}
\item the dynamics \eqref{eq:2BSDE} for any $0\leq s \leq t < T$ ;
\item the integrability property for any $\eps > 0$:
$$\|Y\|^\ell_{\bD_0^\ell(0,T-\eps)} + \|Z\|^\ell_{\bH_0^\ell(0,T-\eps)} + \sup_{\bP \in \cP_0} \bE^\bP \left[ [M^{\bP}]_{T-\eps}^{\frac{\ell}{2}} \right] +\sup_{\bP\in \cP_0} \bE^\bP \left( K^\bP_{T-\eps} \right)^\ell< +\infty \ ;$$
\item the minimality condition \eqref{eq:minim_cond_K_2BSDE_eps} ;
\item the weak terminal condition: $\cP_0$-q.s.
\begin{equation}\label{eq:super_terminal_cond}
\liminf_{s\to T} Y_s \geq \xi.
\end{equation}
\end{itemize}
Moreover this solution is the non-negative minimal solution, that is if $(\overline{Y},\overline{Z},\overline{M}^\bP,\overline{K}^\bP)$ satisfies the four previous conditions together with $\overline{Y}_t\geq 0$ for any $t \in [0,T]$ $\cP_0$-q.s., then $\overline{Y}_t \geq Y_t$ for any $t \in [0,T]$ $\cP_0$-q.s.
\end{Prop}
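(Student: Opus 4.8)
The plan is to assemble the super-solution from the two preceding lemmas and then verify only the three properties not yet addressed: the minimality condition \eqref{eq:minim_cond_K_2BSDE_eps}, the weak terminal condition \eqref{eq:super_terminal_cond}, and the minimality of $Y$ among non-negative super-solutions. The construction of $(Y,Z,M^\bP,K^\bP)$ as the limit, when $L\to+\infty$, of the truncated solutions $(Y^L,Z^L,M^{L,\bP},K^{L,\bP})$ on each interval $[0,T-\eps]$, together with the dynamics \eqref{eq:2BSDE}, the integrability on $[0,T-\eps]$, and the non-negativity $0\le Y_t\le (T-t)^{-\kq}U_t$, is exactly the content of the previous lemma and of the \emph{a priori} estimate \eqref{eq:sing_apriori_estim}, so I would simply quote these. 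The representation formula $Y_t=\underset{\bP'\in\cP(t,\bP,\bF_+)}{\mbox{esssup}^\bP}\,y^{\bP'}_t$ proved there is the main tool for everything that follows.

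For the minimality condition, I would observe that for each fixed $\eps>0$ the process $(Y,Z,M^\bP,K^\bP)$ restricted to $[0,T-\eps]$ solves the \emph{non-singular} 2BSDE with terminal datum $Y_{T-\eps}$, which lies in a bounded-integrability space $\bL^\ell$ thanks to \eqref{eq:sing_apriori_estim}, and that on this interval $Y$ inherits the representation formula \eqref{eq:representation_formula_2BSDE} relative to terminal time $T-\eps$ and terminal value $Y_{T-\eps}$ (by passing to the limit the corresponding identity for each $Y^L$). Hence the linearization argument in the proof of Proposition \ref{prop:2BSDE_existence} applies verbatim with $T$ replaced by $T-\eps$: setting $\delta\widehat\cY^+=Y-y^{\bP'}(T-\eps,Y_{T-\eps})$ together with the weights $\Lambda^{\bP'},\Delta^{\bP'}$, Hölder's inequality bounds $\bE^{\bP'\otimes\bP_0}\!\left[\int_t^{T-\eps}\Lambda^{\bP'}_s\,dK^{\bP'}_s\,\big|\,\overline\cF^+_t\right]$ by a constant times $(\delta\widehat\cY^+_t)^{1/2}$, which forces \eqref{eq:minim_cond_K_2BSDE_eps}.

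For the weak terminal condition I would use the representation formula directly. Taking $\bP'=\bP$ in the essential supremum gives $Y_t\ge y^\bP_t$ for every $t<T$, $\bP$-a.s., where $y^\bP$ is the minimal super-solution of the singular BSDE under $\bP$ constructed in Section \ref{sect:standard_form_liquid_pb}. Since $y^\bP$ satisfies the weak terminal condition \eqref{eq:terminal_cond}, i.e.\ $\liminf_{t\to T}y^\bP_t\ge\xi$ $\bP$-a.s., we get $\liminf_{t\to T}Y_t\ge\xi$ $\bP$-a.s., and as this holds for every $\bP\in\cP_0$ it holds $\cP_0$-q.s. The left-continuity of $\bF^\bP_+$ at $T$ enters here only indirectly: it is precisely this hypothesis that guarantees \eqref{eq:terminal_cond} for each individual $y^\bP$. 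Finally, for minimality, let $(\overline Y,\overline Z,\overline M^\bP,\overline K^\bP)$ be any non-negative super-solution meeting the four conditions; fixing $\bP\in\cP_0$ and $\bP'\in\cP(t,\bP,\bF_+)$, under $\bP'$ the object $\overline Y$ satisfies \eqref{eq:2BSDE} with its non-decreasing $\overline K^{\bP'}$ and the weak terminal condition, hence is a non-negative super-solution of the single-probability singular BSDE under $\bP'$. By minimality of $y^{\bP'}$ in the Kruse--Popier framework, $\overline Y_t\ge y^{\bP'}_t$ $\bP'$-a.s.; since $\bP'=\bP$ on $\cF^+_t$ and $\overline Y_t$ is $\cF^+_t$-measurable, taking the essential supremum over $\bP'$ yields $\overline Y_t\ge Y_t$ $\bP$-a.s., hence $\cP_0$-q.s.

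I expect the minimality of $Y$ to be the most delicate step: one must check that restricting the $\cP_0$-q.s.\ dynamics and the weak terminal condition to a single $\bP'$ genuinely lands in the one-probability singular framework of Section \ref{sect:standard_form_liquid_pb}, and that the process $y^{\bP'}$ there (the increasing limit of truncated solutions) is indeed \emph{minimal} among all non-negative super-solutions under $\bP'$, so that no independent comparison argument at the singular time $T$ is needed. Everything else is routine once the representation formula and the \emph{a priori} estimate \eqref{eq:sing_apriori_estim} are in hand.
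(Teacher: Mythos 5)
Your overall architecture matches the paper's: the dynamics, integrability and non-negativity are quoted from the preceding lemmas, the minimality condition \eqref{eq:minim_cond_K_2BSDE_eps} is obtained by running the linearization/H\"older argument of Proposition \ref{prop:2BSDE_existence} on $[0,T-\eps]$, and the weak terminal condition follows from the representation formula together with $\liminf_{s\to T}y^{\bP}_s\geq\xi$ for each fixed $\bP$ (which is where the left-continuity of $\bF^\bP_+$ at $T$ enters). Where you genuinely diverge is the minimality of $Y$. You restrict $(\overline Y,\overline Z,\overline M^{\bP'},\overline K^{\bP'})$ to a single $\bP'$ and invoke, as a black box, the minimality of $y^{\bP'}$ among non-negative super-solutions of the singular BSDE under $\bP'$; note that this is a \emph{stronger} statement than the one recapped in Section \ref{sect:standard_form_liquid_pb}, where the minimal super-solution is characterized by exact dynamics on $[0,T)$ (no extra non-decreasing process), so the class over which minimality is asserted does not obviously contain $\overline Y$ with its $\overline K^{\bP'}$. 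The paper does not rely on such a black box: it first uses the uniqueness argument of Proposition \ref{prop:uniq_2BSDE} (and hence the minimality condition on $\overline K$) to obtain the representation $\overline Y_t=\esssup^{\bP}_{\bP'}\bar y^{\bP'}_t(T-\eps,\overline Y_{T-\eps})$, thereby replacing $\overline Y$ by genuine BSDE solutions, and then compares each $\bar y^{\bP'}$ with $y^{L,\bP'}$ by linearization, a lower bound $\widetilde y\geq-(1+T)L$, Fatou's lemma as $\eps\downarrow 0$, and left-continuity of the filtration to control the jump of $m^{L,\bP'}$ at $T$. Your route is cleaner in that it never uses the minimality condition on $\overline K$ (only that it is non-decreasing, whose sign is favorable in the comparison), and the deferred verification does go through: the same linearization-plus-Fatou computation absorbs the extra non-decreasing term. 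But that verification \emph{is} the substantive content of this step, so it should be written out rather than attributed to \cite{krus:popi:15}; once you do so, your proof and the paper's essentially coincide, with yours proving marginally more (minimality over super-solutions whose $\overline K$ need not satisfy \eqref{eq:minim_cond_K_2BSDE_eps}).
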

\begin{proof}
The first two points are direct consequences of the previous lemma. 
Since $Y$ is the essential supremum of the super-solutions $y^\bP$, following the same arguments as in the proof of Proposition \ref{prop:2BSDE_existence}, we deduce that $K^\bP$ satisfies the minimality condition: for any $\bP \in \cP_0$
\begin{equation} \label{eq:minim_cond_K_2BSDE_eps}
\underset{\bP' \in \cP(t,\bP,\bF_+)}{\essinf^\bP} \bE^{\bP'} \left[ \int_t^{T-\eps} \exp \left( \int_{t}^s \lambda_u^{\bP'} du \right)dK^{\bP'}_s \bigg|  \cF_t^+ \right] = 0, \quad 0 \leq t \leq T-\eps, \ \bP-a.s.,
\end{equation}
where
$$\lambda_u^{\bP'} = -\frac{1}{(\sq-1)(\widehat{\eta}_u)^{\sq-1}}\frac{Y_u^\sq - (y^{\bP'}_u)^\sq}{Y_u-y^{\bP'}_u} \1_{Y_u\neq y^{\bP'}_u} .$$
Moreover our left-continuity condition on the filtration implies that for any $\bP \in \cP_0$ 
$$\liminf_{s\to T} y^\bP_s \geq \xi,\quad \bP-a.s.$$
Hence from the representation formula, the same inequality holds for $Y$.

Let us prove minimality of this supersolution. Let us consider $(\overline{Y},\overline{Z},\overline{M}^\bP,\overline{K}^\bP)$ satisfying the dynamics \eqref{eq:2BSDE} for any $0\leq s \leq t < T$, the minimality condition \eqref{eq:minim_cond_K_2BSDE_eps} and the weak terminal condition \eqref{eq:super_terminal_cond}. We also assume that $\overline{Y}$ is $\cP_0-q.s.$ non-negative and $(\overline{Y},\overline{Z},\overline{M}^\bP,\overline{K}^\bP)$ verifies some integrability property similar to $(Y,Z,M^\bP,K^\bP)$ (with maybe a different power $\overline{\ell}$). From the proof of Proposition \ref{prop:uniq_2BSDE} (uniqueness for 2BSDE), we deduce that for any $\eps > 0$, the representation property holds on $[0,T-\eps]$, that is for any $t \in [0,T-\eps]$ and $\bP \in \cP_0$
\begin{equation}\label{eq:representation_formula_sing_2BSDE}
\overline{Y}_{t} = \underset{\bP' \in \cP(t,\bP,\bF_+)}{\esssup^\bP} \bar y_{t}^{\bP'}(T-\eps,\overline{Y}_{T-\eps}), \bP-a.s.
\end{equation}
where $\bar y^{\bP'}=\bar y^{\bP'}(T-\eps,\overline{Y}_{T-\eps})$ is the first part of the solution $(\bar y^{\bP'},\bar z^{\bP'},\bar m^{\bP'})$ of the BSDE \eqref{eq:bsde} on $[0,T-\eps]$ with terminal condition $\overline{Y}_{T-\eps}$ at time $T-\eps$ under $\bP'$.

Recall that $Y^L$ satisfies \eqref{eq:L_2BSDE} and \eqref{eq:truncated_repres_formula}. Fix $L$ and any probability $\bP$ and consider $\bar y^{\bP}$ and $y^{L,\bP}$ on the time interval $[0,T-\eps]$. Set
 $$\widetilde y = \bar y^{\bP} - y^{L,\bP}, \quad \widetilde z = \bar z^{\bP} - z^{L,\bP}, \quad \widetilde m = \bar m^{\bP} - m^{L,\bP}.$$
We have
\begin{eqnarray*}
f(t,\bar y^{\bP}_t) - f(t,y^{L,\bP}) & = &- \frac{1}{(\sq-1)(\widehat{\eta}_t)^{\sq-1}} \left(\frac{(\bar y^{\bP}_t)^{\sq} - (y^{L,\bP}_t)^{\sq}}{\bar y^{\bP}_t - y^{L,\bP}_t } \right)  \1_{\bar y^{\bP}_t\neq y^{L,\bP}_t} \left( \bar y^{\bP}_t - y^{L,\bP}_t \right)\\
& + &  \widehat{\gamma}_t -( \widehat{\gamma}_t \wedge L)  \\
& = & \lambda^{\bP}_t \left( \bar y^{\bP}_t - y^{L,\bP}_t \right) + \widehat{\gamma}_t -( \widehat{\gamma}_t \wedge L) 
\end{eqnarray*}
with $ \lambda^{\bP}_t\leq 0$. Thus the process $(\widetilde y, \widetilde z, \widetilde m)$ solves the BSDE
\begin{eqnarray*}
d\widetilde y_t & = & \left[ - \lambda^\bP_t \widetilde y_t - \widehat{\gamma}_t   \1_{\widehat{\gamma}_t \geq L} \right] dt 
+ \widetilde z_t + d\widetilde m_t
\end{eqnarray*}
on $[0, T-\eps]$ with terminal condition $\widetilde y_{T-\eps} = \overline{Y}_{T-\eps} - y^{L,\bP}_{T-\eps}$. Thereby
\begin{eqnarray*}
\widetilde y_s & =  &  \E^{\bP} \left[\widetilde y_{T-\eps} \Gamma_{s,T-\eps}   + \int_s^{T-\eps} \Gamma_{s,u} \widehat{\gamma}_u   \1_{\widehat{\gamma}_u \geq L} du \bigg| \cF^{\bP,+}_s \right]  \geq   \E^{\bP} \left[\widetilde y_{T-\eps} \Gamma_{s,T-\eps}  \bigg| \cF^{\bP,+}_s \right]
\end{eqnarray*}
where $\Gamma_{s,t} = \exp\left( \int_s^t \lambda^{\bP}_u du \right)$. Note that we have $y^{L,\bP}_t \leq (1+T)L$ and hence $\widetilde y_t \geq -(1+T)L$. Thus $\widetilde y  \Gamma_{s,.}$ is bounded from below. We can apply Fatou's lemma to obtain
\begin{eqnarray*}
\widetilde y_s & = & \liminf_{\eps \downarrow 0} \E^{\bP} \left[\widetilde y_{T-\eps}  \Gamma_{s,T-\eps} \bigg| \cF^{\bP,+}_s \right]  \geq  \E^{\bP} \left[  \liminf_{\eps \downarrow 0} (\widetilde y_{T-\eps}  \Gamma_{s,T-\eps})  \bigg|\cF^{\bP,+}_s \right].
\end{eqnarray*}
The process $(\Gamma_{s,t}, \ s\leq t\leq T)$ is c\`adl\`ag, non-negative and bounded by one. Hence a.s.
$$ \liminf_{\eps \downarrow 0} (\widetilde y_{T-\eps} \Gamma_{s,T-\eps}) = (\liminf_{\eps \downarrow 0} \widetilde y_{T-\eps}) \Gamma_{s,T^-} \geq (\xi - \xi\wedge L) \Gamma_{s,T^-}\geq 0.$$
Here again we use left-continuity of the filtration to exclude jumps for the orthogonal martingale $m^{L,\bP}$. Finally, $\bar y^{\bP}_s \geq y^{L,\bP}_s$ for any $s \in [0,T)$. Since it holds for any $\bP\in \cP_0$, we deduce that $\overline{Y}_s \geq Y^{L}_s$ for any $L \geq 0$. Taking the limit as $L$ goes to $\infty$ yields the claim.
\end{proof}

\begin{Remark}\label{rem:sing_gen_2BSDE}
The previous proposition holds true for more general generators satisfying Conditions {\bf (H)} together with the growth condition:
there exists a constant $\sq > 1$ and a positive process $\eta$ such that for any $y \geq 0$
\begin{equation*}
\widehat f^\bP(t,y,z)\leq -\frac{1}{(\sq-1)\widehat \eta^{\sq-1}_t}|y|^{\sq} + \widehat f^\bP(t,0,z)
\end{equation*}
and the conditions \eqref{eq:int_cond_eta_gamma_sing_1} and \eqref{eq:int_cond_eta_gamma_sing_2} hold replacing $\widehat \gamma$ by $\widehat f^{0,\bP}$.
See \cite{krus:popi:15} for details for singular BSDEs. 
\end{Remark}

We can now obtain an optimal solution for the control problem \eqref{eq:control_constraint}.
\begin{Prop} \label{prop:constr_pb}
The constrainted problem \eqref{eq:control_constraint} has an optimal state process $\cX^*$ defined by
$$\cX^*_s = x - \int_t^s \left(\frac{Y_u}{\widehat{\eta}_u}\right)^{\sq-1} \cX^*_u du,$$
Moreover the value function is given by: $J(t,x) = |x|^{\kq} Y_t$.
\end{Prop}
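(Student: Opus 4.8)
The plan is to establish the two inequalities $J(t,x)\le |x|^{\kq}Y_t$ and $J(t,x)\ge |x|^{\kq}Y_t$ separately. For the upper bound I use the candidate $\cX^*$ with drift $\alpha^*_s=-(Y_s/\widehat{\eta}_s)^{\sq-1}\cX^*_s$, following the verification computation of Lemma~\ref{lem:optim_control}. First I must check admissibility, i.e. $\cX^*\in\cA_0(t,x)$. Writing $\cX^*_s=x\exp(-\int_t^s(Y_u/\widehat{\eta}_u)^{\sq-1}\,du)$ and using $Y_u\ge y^{\bP}_u\ge 0$ $\cP_0$-q.s., the liquidation integral for $\cX^*$ dominates the one driven by $y^{\bP}$; since the one-measure optimal state of Lemma~\ref{prop:optimal_control_uncons} already liquidates ($\cX^{*,\bP}_T\1_{\cS}=0$, $\bP$-a.s.), the integral $\int_t^T(Y_u/\widehat{\eta}_u)^{\sq-1}\,du$ diverges on $\cS$ for every $\bP$, whence $\cX^*_T\1_{\cS}=0$ $\cP_0$-q.s.

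Next, for each $\bP$ I apply It\^o's formula to $Y_s(\cX^*_s)^{\kq}$ on $[t,T-\eps]$ using the dynamics \eqref{eq:2BSDE}; as in Lemma~\ref{lem:optim_control} the drift reduces to $-(\widehat{\eta}_s|\alpha^*_s|^{\kq}+\widehat{\gamma}_s(\cX^*_s)^{\kq})\,ds$ and, dropping the non-positive term $-(\cX^*_s)^{\kq}\,dK^{\bP}_s$, conditioning gives $\bE^{\bP}[Y_{T-\eps}(\cX^*_{T-\eps})^{\kq}+\int_t^{T-\eps}(\widehat{\eta}_s|\alpha^*_s|^{\kq}+\widehat{\gamma}_s(\cX^*_s)^{\kq})\,ds\mid\cF^+_t]\le Y_tx^{\kq}$. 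Letting $\eps\downarrow0$, the running cost converges upward by monotone convergence, and for the terminal term I invoke conditional Fatou together with the weak terminal condition $\liminf_{s\to T}Y_s\ge\xi$ from Proposition~\ref{prop:sing_2BSDE}: on $\cS$ admissibility and the convention $0\cdot\infty=0$ give a limit $\ge\xi(\cX^*_T)^{\kq}=0$, while on $\cS^{c}$ continuity of $\cX^*$ gives $\liminf Y_{T-\eps}(\cX^*_{T-\eps})^{\kq}\ge\xi(\cX^*_T)^{\kq}$. This produces $\bE^{\bP}[\xi(\cX^*_T)^{\kq}+\int_t^{T}(\widehat{\eta}_s|\alpha^*_s|^{\kq}+\widehat{\gamma}_s(\cX^*_s)^{\kq})\,ds\mid\cF^+_t]\le Y_tx^{\kq}$ for every $\bP$; since the right-hand side is $\cF^+_t$-measurable and $\bP$-independent, taking $\esssup_{\bP\in\cP_t}$ shows the cost of $\cX^*$ is $\le Y_tx^{\kq}$, hence $J(t,x)\le |x|^{\kq}Y_t$.

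For the lower bound, fix an arbitrary $\cX\in\cA_0(t,x)$; then $\cX\in\cA^{\bP}_0(t,x)$ for every $\bP\in\cP_t$, so the conditional one-measure verification theorem of Lemma~\ref{prop:optimal_control_uncons}, applied under each fixed $\bP$, gives $\bE^{\bP}[\int_t^T(\widehat{\eta}_s|\alpha_s|^{\kq}+\widehat{\gamma}_s|\cX_s|^{\kq})\,ds+\xi|\cX_T|^{\kq}\mid\cF^+_t]\ge y^{\bP}_tx^{\kq}$, $\bP$-a.s. Taking $\esssup_{\bP\in\cP_t}$ on both sides and using $\esssup_{\bP\in\cP_t}y^{\bP}_t=Y_t$ $\cP_0$-q.s. — which follows from the representation property of $Y$ established in the preceding lemma, since it yields both $Y_t\ge y^{\bP}_t$ q.s. and $Y_t=\esssup_{\bP'\in\cP(t,\bP,\bF_+)}y^{\bP'}_t$, $\bP$-a.s. — gives $\esssup_{\bP\in\cP_t}\bE^{\bP}[\,\cdot\mid\cF^+_t]\ge |x|^{\kq}Y_t$. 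As $\cX$ was arbitrary, $J(t,x)\ge |x|^{\kq}Y_t$. The two bounds give $J(t,x)=|x|^{\kq}Y_t$, and since $\cX^*$ attains this value it is optimal.

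The delicate point is the passage to the limit at the singular terminal time $T$: recovering (resp. controlling) the terminal cost $\xi(\cX^*_T)^{\kq}$ under the convention $0\cdot\infty=0$ rests on the admissibility $\cX^*_T\1_{\cS}=0$ and on the left-continuity of the filtration, which excludes a jump of the orthogonal martingale at $T$ exactly as in the proof of Proposition~\ref{prop:sing_2BSDE}. A secondary technicality is justifying the interchange of $\esssup_{\bP\in\cP_t}$ with the conditional expectations and its identification with $Y_t$ through the representation formula.
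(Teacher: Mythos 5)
Your proof is correct and its backbone (It\^o's formula applied to $Y_s(\cX^*_s)^{\kq}$ on $[t,T-\eps]$, dropping the $K^{\bP}$-term, then conditional Fatou at the singular time $T$) is the same as the paper's, but you handle two sub-arguments by genuinely different means. For admissibility of $\cX^*$, the paper introduces the non-negative local martingale $\theta_s = Y_s(\cX^*_s)^{\kq-1} - Y_t x^{\kq-1} + \int_t^s(\cX^*_u)^{\kq-1}(\widehat\gamma_u\,du + dK^{\bP}_u)$, whose supermartingale convergence yields both $\cX^*_T\1_{\cS}=0$ and the existence of $\lim_{t\uparrow T}Y_t|\cX^*_t|^{\kq-1}$, the latter being what the paper uses in its case distinction $\liminf Y=\infty$ versus $\liminf Y<\infty$; you instead deduce divergence of $\int_t^T(Y_u/\widehat\eta_u)^{\sq-1}du$ on $\cS$ from the comparison $Y\ge y^{\bP}$ and the known liquidation property of the one-measure optimal state (Lemma \ref{prop:optimal_control_uncons}), and your $\cS$/$\cS^c$ case split at the terminal time then works without needing that limit to exist. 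For the lower bound $J(t,x)\ge|x|^{\kq}Y_t$, the paper goes through the truncated problems, $J(t,x)\ge J^L(t,x)=|x|^{\kq}Y^L_t$, and passes to the limit $L\to\infty$; you apply the one-measure verification theorem under each fixed $\bP$ to an arbitrary $\cX\in\cA_0(t,x)$ and then invoke the representation formula $Y_t=\esssup^{\bP}_{\bP'\in\cP(t,\bP,\bF_+)}y^{\bP'}_t$. Both routes are legitimate here; the paper's is slightly more self-contained (it only uses objects already constructed in the limiting procedure), while yours is more direct but leans on the identification of $\esssup_{\bP\in\cP_t}$ with the measure-by-measure essential suprema and on the applicability of Lemma \ref{prop:optimal_control_uncons} under each completed filtration $\bF^{\bP}_+$ — points the paper also treats informally, so this is a matter of emphasis rather than a gap.
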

\begin{proof}
If we define for $t \leq s < T$
$$\theta_s := \left[Y_s (\cX^*_s)^{{\kq}-1} - Y_t (\cX^*_t)^{{\kq}-1} + \int_t^s (\cX^*_u)^{{\kq}-1} \widehat{\gamma}_u du + \int_t^s (\cX^*_u)^{{\kq}-1} dK^\bP_u \right],$$
then we can easily show that under each $\bP \in \cP_0$, $\theta$ is a non-negative local martingale, and thus a non-negative supermartingale. Thereby $\theta_s$ has a limit, $\bP-a.s.$ when $s$ goes to $T$. Hence since $\bP-a.s.$
$$\liminf_{s\to T} Y_s \1_\cS= +\infty,$$
we obtain that
$$\cX^*_s = \left( \frac{\theta_s + Y_t x^{{\kq}-1}- \int_t^s (\cX^*_u)^{{\kq}-1}( \widehat{\gamma}_u du +dK^\bP_u) }{Y_s}\right)^{\sq-1} \leq\left( \frac{\theta_s + Y_t x^{{\kq}-1}}{Y_s}\right)^{\sq-1} $$
tends to zero on $\cS$, $\bP-a.s.$ In other words $\cX^*  \in \cA_0(t,x)$. 

As in Lemma \ref{lem:optim_control}, we have
\begin{eqnarray*}
d(Y_s (\cX^*_s)^{\kq}) & =& - \left[\widehat{\eta}_s \left( |\alpha_s| \right)^{\kq} + (\widehat{\gamma}_s) (\cX^*_s)^{\kq} \right] ds+   (\cX^*_{s})^{\kq} Z_s dX^{c,\bP}_s \\
& +& (\cX^*_{s})^{\kq} dM^\bP_s -  (\cX^*_{s})^{\kq} dK^\bP_s. 
\end{eqnarray*} 
Thus for any $\eps > 0$
\begin{eqnarray*}
Y_t x^{\kq}  & = &\bE^\bP \left[  Y_{T-\eps} (\cX^*_{T-\eps})^{\kq} + \int_t^{T-\eps} \left[ \widehat{\eta}_s \left( \alpha_s \right)^{\kq} + (\widehat{\gamma}_s ) (\cX^*_s)^{\kq} \right] ds \bigg| \cF_t^+\right]  \\
&& \qquad +\bE^\bP \left[ \int_t^{T-\eps}  (\cX^*_{s})^{\kq} dK^\bP_s \bigg| \cF_t^+\right] . 
\end{eqnarray*} 
From the definition of $\theta$, it follows that also the limit $\lim_{t \uparrow T}Y_{t} |\cX^*_{t}|^{{\kq}-1}\in \R$ exists and that $\cX^*_T=\lim_{t \uparrow T}|\cX^*_{t}|=0$ if $\liminf_{t \uparrow T}Y_{t}=\infty$. Recall that $\liminf_{t\uparrow  T} Y_s \geq \xi$ and let us distinguish two cases. First assume that $\liminf_{t \uparrow  T}Y_{t}=\infty$. Then $\liminf_{t \uparrow T}Y_{t} |\cX^*_{t}|^{\kq}=(\lim_{t\uparrow T}Y_{t} |\cX^*_{t}|^{{\kq}-1})(\lim_{t\uparrow T}|\cX^*_{t}|)=0=\xi |\cX^*_T|^{\kq}$ (for the last equality we use that $\infty\cdot 0:=0$). Next assume that $\liminf_{t \uparrow  T}Y_{t}<\infty$. Then it follows that $\liminf_{t \uparrow T}Y_{t} |\cX^*_{t}|^{\kq} \ge \xi |\cX^*_T|^{\kq}$. Hence for any $\bP$, $\liminf_{t \uparrow  T}Y_{t} |\cX^*_{t}|^{\kq} \ge \xi |\cX^*_{T}|^{\kq}$, $\bP-a.s.$ By Fatou's lemma and since $K^\bP$ is non-decreasing, we have
\begin{eqnarray*}
Y_t x^{\kq}  & \geq &\bE^\bP \left[  \xi (\cX^*_{T})^{\kq} + \int_t^{T} \left[ \widehat{\eta}_s \left( \alpha_s \right)^{\kq} + (\widehat{\gamma}_s ) (\cX^*_s)^{\kq} \right] ds \bigg| \cF_t^+\right] . 
\end{eqnarray*} 
Thereby
$$Y_t x^{\kq}  \geq  \underset{\bP \in \mathcal{P}_t}{\esssup} \ \bE^\bP \left[  \int_t^T \left( \widehat{\eta}_s |\alpha_s|^{\kq} + \widehat{\gamma}_s  |\cX_s|^{\kq} \right) ds + \xi |\cX_T|^{\kq} \bigg| \cF^+_t \right] \geq J(t,x).$$
Now we have obviously $J(t,x) \geq J^L(t,x)$ and: 
$$|x|^{\kq} Y_t = \lim_{L\nearrow+\infty} |x|^{\kq} Y^L_t =  \lim_{L\nearrow+\infty} J^L(t,x) \leq J(t,x) \leq Y_t x^{\kq}.$$
This gives the optimality for $\cX^*$ and the value function of our problem.
\end{proof}

\subsubsection*{Discussion around the examples of \cite{anki:jean:krus:13}}

Even in the classical case, that is for a fixed probability $\bP$, there is in general no explicit solution of the BSDE \eqref{eq:bsde}. But when $\xi = +\infty$ $\bP-a.s.$ and $\gamma=0$, then in \cite{anki:jean:krus:13}, Section 5, an explicit solution $y$ is given provided that $\eta$ has uncorrelated multiplicative increments. This condition is equivalent to the property that the process $(\eta_t/\bE^\bP(\eta_t), \ t\geq 0)$ is a $\bP$-martingale (see \cite[Lemma 5.1]{anki:jean:krus:13}). Under this condition, the value function and an optimal state process are given in \cite[Proposition 5.3]{anki:jean:krus:13}.

Assume that $\eta$ is given by: $\eta_t = \eta_0 \exp \left( X_t - \dfrac{1}{2} \int_0^t \widehat a_s ds \right)$ and that under $\bP$, the drift $b^\bP$ of $X$ is deterministic. Note that the contatenation property of the family $\cP$ implies that $b=b^\bP$ should not depend on $\bP$. Then under each $\bP$, $\eta$ satisfies $d\eta_t = \eta_t dX_t$ and has uncorrelated multiplicative increments. The solution $y^\bP$ is explicitely given by:
$$ y^\bP_t = \eta_t  \frac{1}{\left( A_t \int_t^T \frac{1}{A_s} ds \right)^{{\kq}-1}}, \quad A_t = \exp \left(\int_0^s (\sq-1)b_r dr \right)  = [\bE^\bP (\eta_s)]^{\sq-1}.$$
Hence we have an explicit formula for the solution of the 2BSDE 
$$Y_t = \eta_t  \frac{1}{\left( A_t \int_t^T \frac{1}{A_s} ds \right)^{{\kq}-1}}.$$ 
An optimal state process is deterministic: 
$$\cX_t =\frac{1}{\int_0^T \frac{1}{A_s} ds} \int_t^T \frac{1}{A_s} ds.$$
In particular if the canonical process is a local martingale under each $\bP$, $Y = \eta$ and $\cX_t = (T-t)/T$. Roughly speaking, since $\eta$ models the cost (price impact) and $\gamma$ the risk, then the drift is important for $\eta$ (average cost) and the volatility is important for $\gamma$. That's why when taking $\gamma$ is equal to $0$, the volatility uncertainty can not be seen in the generator.

\subsection*{Acknowledgements.}
The authors thank Nicole El Karoui and Shiqi Song for the fruitful remarks concerning the left-continuity property of the filtration. 
The authors are grateful to Dylan Possama\"i for the discussion on the minimality condition.

\section{Appendix: (Reflected) BSDE with monotone generator}\label{sect:recall_monotone_BSDE}

\setcounter{Lemma}{0}
\renewcommand{\theLemma}{A.\arabic{Lemma}}

\setcounter{Prop}{0}
\renewcommand{\theProp}{A.\arabic{Prop}}

\subsection{Notations and Conditions \textbf{(H)} and \textbf{(H')}} \label{ssect:notations}

In this section the setting is the same as in \cite{bouc:poss:zhou:15} or \cite{krus:popi:14}. Let $T  > 0 $ be fixed and let $(\Omega,\cF,\bP)$ be a probability space, equipped with a filtration $\bF=\{ \cF_t, \ 0\leq t\leq T\}$ satisfying the usual conditions and carrying a standard $d$-dimensional $\bF$-Brownian motion $W$. For any $p> 1$ and any $\alpha \in \bR$ we introduce the following spaces:
\begin{itemize}
\item $\bL^{p}$ denotes the space of all $\cF_T-$measurable scalar random variable $\xi$ such that
$$\|\xi\|_{\bL^{p}}^p = \bE \left[|\xi|^p\right]<+\infty.$$
\item $\bD^{p}$ is the space of all $\bR$-valued, $\bF$-adapted processes $Y$ with $\P$-a.s. c\`adl\`ag paths on $[0,T]$, such that
$$\No{Y}_{\mathbb D^{p}}^p = \bE \left[\sup_{0\leq s\leq T} |Y_s|^p\right]<+\infty.$$
\item $\mathbb H^{p,\alpha}$ denotes the space of all $\bF$- predictable $\mathbb R^d-$valued processes $Z$ such that
$$\No{Z}_{\mathbb H^{p,\alpha}}^p = \mathbb E\left[\left(\int_0^T e^{\alpha s} \No{Z_s}^2ds\right)^{\frac p2}\right]<+\infty.$$
\item $\mathbb M^{p,\alpha}$ is the space of all $\bF$-optional martingales $M$ with $\P$-a.s. c\`adl\`ag paths on $[0,T]$, with $M_0=0$, $\P-a.s.$, such that $M$ is orthogonal to $W$ and 
$$\No{M}_{\mathbb M^{p,\alpha}}^p=\E\left[ \int_0^T e^{\alpha  s} d\left[M\right]_s\right]^{\frac p2}<+\infty.$$
\item $\mathbb I^{p,\alpha}$ (resp. $\mathbb I^{o,p,\alpha}$) denotes the space of all $\bF$-predictable (resp. $\bF$-optional) processes $K$ with $\bP$-a.s. c\`adl\`ag and non-decreasing paths on $[0,T]$, with $K_0=0$, $\P$-a.s., and
$$\No{K}_{\mathbb I^{p,\alpha}}^p=\bE \left[ \left( \int_0^T e^{\alpha s/2} dK_s\right)^{p}\right]<+\infty.$$
\end{itemize}
The spaces $\mathbb H^{p,0}$, $\mathbb M^{p,0}$ and $\mathbb I^{p,0}$ will be denoted $\mathbb H^{p}$, $\mathbb M^{p}$ and $\mathbb I^{p}$. In the It\^o formula  for $p> 1$ we will use the constant
\begin{equation} \label{def:c(p)}
c(p) =\frac{p}{2}((p-1)\wedge 1)
\end{equation}
and the function $\phi_p(x) = |x|^{p-1}\mbox{sgn}(x)\1_{x\neq 0}$ (see \cite[Corollary 2.3]{bria:dely:hu:03} or \cite[Corollary 1]{krus:popi:14}). 

We consider a generator function 
\begin{equation*}
f: (t, \om, y, z)  \in [0,T] \x \Om \x \R \x \R^d   \longrightarrow  \R.
\end{equation*}
The generator $f$ satisfies Condition {\bf (H)}. In {\bf H4}, the process $\Psi$ depends on $t$ and $\omega$ and is supposed to satisfy:
$$\bE \int_0^T (\Psi_t)^\varrho dt < +\infty,$$
(see Condition {\bf C1}). Sometimes we will use the stronger condition {\bf H4'}: there exists $\sq \geq 1$ and a constant $L_\sq$ such that 
$$|f(t,\omega,y,0) -f^{0}_t| \leq L_\sq(1+ |y|^\sq).$$
If $f$ verifies {\bf H1}, {\bf H2}, {\bf H3} and {\bf H4'}, we will say that {\bf (H')} holds.

\subsection{Results for monotone BSDE}

We consider the BSDE:
\begin{equation}  \label{eq:general_BSDE}
y_t = \xi +  \int_t^T f(u,y_u,z_u) du  -\int_t^T z_u dW_u - \int_t^T dm_u
\end{equation}
Let us now recall several classical results on the BSDE \eqref{eq:general_BSDE}. If {\bf (H)} holds\footnote{In fact in this section, Hypothesis {\bf H4} could be replaced by a more general condition (see \cite{krus:popi:14}, Assumption (H2)).} and if for some $p> 1$
\begin{equation} \label{eq:integrability_cond}
\bE \left[ |\xi|^p + \int_0^T |f^{0}_t|^p dt \right] < +\infty,
\end{equation}
then there is a unique solution $(y,z,m) \in \bD^p \times \bH^p \times \bM^p$ (see \cite{bria:dely:hu:03}, \cite{elka:huan:97} or \cite{krus:popi:14}). Let us emphasize that the quasi-left continuity property of the filtration assumed in \cite{krus:popi:14} is in fact unnecessary (see the introduction of \cite{bouc:poss:zhou:15} or \cite{popi:16}). The next {\it a priori} estimate on $(y,z,m)$ will be crucial.
\begin{Lemma}[{\it A priori} estimate] \label{lem:a_priori_estim_BSDE}
Under Condition {\bf (H)}, for any $\alpha > p L_1 + \frac{L_2^2}{(p-1)\wedge 1}$ there exists a constant $C_{p}$ such that 
$$\|e^{\alpha.}y\|_{ \bD^p } + \|z\|_{ \bH^{p,\alpha} } + \| m\|_{ \bM^{p,\alpha}} \leq C_{p} \bE \left[ e^{\alpha pT}|\xi|^p + \int_0^T |e^{\alpha t}f^{0}_t|^p dt \right].$$
\end{Lemma}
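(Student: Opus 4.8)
Under Condition **(H)**, for any $\alpha > pL_1 + \frac{L_2^2}{(p-1)\wedge 1}$ there exists a constant $C_p$ such that
$$\|e^{\alpha\cdot}y\|_{\mathbb{D}^p} + \|z\|_{\mathbb{H}^{p,\alpha}} + \|m\|_{\mathbb{M}^{p,\alpha}} \leq C_p \mathbb{E}\left[e^{\alpha pT}|\xi|^p + \int_0^T |e^{\alpha t}f^0_t|^p\,dt\right].$$

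This is a classical a priori estimate for monotone BSDEs (cf. Briand-Delyon-Hu, Kruse-Popier). Let me sketch the proof.

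The standard approach is an Itô/Meyer formula applied to $e^{p\alpha t}|y_t|^p$, using the convexity function $\phi_p$ and the constant $c(p)$ defined in the excerpt. Let me outline the key steps.
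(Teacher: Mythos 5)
You have correctly identified both the nature of the result and the standard route to it --- an It\^o--Meyer expansion of $e^{p\alpha t}|y_t|^p$ using $\phi_p$ and $c(p)$ --- and the references you name are precisely the ones the paper itself relies on: its entire proof of this lemma is the citation ``see Propositions 2 and 3 in \cite{krus:popi:14}''. So your framing is consistent with the paper's.

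However, your submission stops exactly where the proof would have to begin: you announce ``let me sketch the proof'' and ``let me outline the key steps'' and then provide neither. As it stands there is no argument at all. In particular, none of the following points --- each of which is where the specific form of the statement comes from --- is addressed. (i) After applying the It\^o--Meyer formula to $e^{p\alpha t}|y_t|^p$, one must split the generator term as
$\phi_p(y_s)\big(f(s,y_s,z_s)-f(s,0,z_s)\big)\le L_1|y_s|^p$ (condition {\bf H2}), $\phi_p(y_s)\big(f(s,0,z_s)-f^0_s\big)\le L_2|y_s|^{p-1}\|z_s\|$ (condition {\bf H3}), and $\phi_p(y_s)f^0_s\le |y_s|^{p-1}|f^0_s|$; Young's inequality on the middle term is what produces the threshold $\alpha>pL_1+\frac{L_2^2}{(p-1)\wedge 1}$, and your proposal never derives or even mentions why that particular constant appears. (ii) The orthogonal martingale $m$ may jump, since the filtration is general (not quasi-left continuous); for $p<2$ the function $x\mapsto|x|^p$ is not $C^2$ and the jump terms
$\sum\big[|y_{s-}+\Delta m_s|^p-|y_{s-}|^p-p\phi_p(y_{s-})\Delta m_s\big]$
must be controlled by the ad hoc estimates of \cite{krus:popi:14} (their Lemmas 7 and 8); this is the genuinely delicate point of the lemma and is absent. (iii) Passing from the weighted integrals $\int|y_{s-}|^{p-2}\1_{y_{s-}\neq0}\big(\|z_s\|^2ds+d[m]_s\big)$ to the full norms $\|z\|_{\bH^{p,\alpha}}$ and $\|m\|_{\bM^{p,\alpha}}$ requires a separate BDG-type argument (Proposition 3 of \cite{krus:popi:14}), not just the It\^o step. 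A complete answer would either carry out (i)--(iii) or, as the paper does, explicitly reduce the statement to the cited propositions after checking that their hypotheses are met here; your text does neither.
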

\begin{proof}
See for example Propositions 2 and 3 in \cite{krus:popi:14}. 
\end{proof}

Moreover from the proof of this lemma, we get the next classical estimates: for any stopping time $0\leq\tau\leq T$
\begin{equation} \label{eq:boundedness_y}
|y_\tau| \leq C\left(  \bE \left[ |\xi|^\kappa + \int_\tau^{T}  |f^{0}_s|^\kappa ds \bigg| \cF_{\tau} \right] \right)^{1/\kappa}
\end{equation}
for any $1 < \kappa \leq p$ and 
\begin{equation} \label{eq:integrability_y}
\bE \int_t^T |y_s|^p ds \leq C \bE \left( |\xi|^p + \int_t^{T}  |f^{0}_s|^p ds \right).
\end{equation}
The constant $C$ depends only on $\kappa$ or $p$, $T$, $L_1$ and $L_2$. 

The next trick ({\it linearization procedure}) is used several times in this paper. If $(y,z,m)$ satisfies the BSDE \eqref{eq:general_BSDE}, then
$$y_t =\xi +  \int_t^T f^{0}_s ds + \int_t^T \lambda_s y_s ds + \int_t^T \zeta_s z_s ds - \int_t^T z_s dW_s - \int_t^T dm_s$$
where 
$$\lambda_s  = \frac{f(s,y_s,0)- f^{0}_s}{y_s} \1_{y_s \neq 0},$$
and 
$$\zeta^{i}_{t} = 
(z^{i}_{t})^{-1}(f(t,y_{t},z^{(i)}_{t})-f(t,y_{t},z^{(i-1)}_{t})) \1_{z^{i}_{t} \neq 0}
$$
where for $1 \leq i \leq d$, $z^{(i)}_{t}$ is the $d$-dimensional vector in which the first $i$-components are equal to the ones of $z_t$ and the $d-i$ others are equal to zero. From {\bf H3}, $\zeta$ is a bounded (by $L_2$) vector-valued process. From the monotone condition {\bf H2}, $\lambda_s \leq L_1$ a.s. If $\Lambda_t = \exp \left(\int_0^t \lambda_s ds \right)$, then
\begin{equation} \label{eq:linear_procedure}
\Lambda_t y_t = \Lambda_T \xi +  \int_t^T \Lambda_s f^{0}_s ds + \int_t^T \zeta_s \Lambda_s z_s ds - \int_t^T \Lambda_s z_s dW_s - \int_t^T \Lambda_s dm_s.
\end{equation}
Hence if $\bQ$ is the probability measure equivalent to $\bP$ defined by the density
\begin{equation} \label{eq:density_chgt_measure}
\cE \left( -\int_0^. \zeta_s dW_s \right) = \exp \left( -\int_0^. \zeta_s dW_s - \frac{1}{2} \int_0^. \|\zeta_s\|^2 ds \right),
\end{equation}
we obtain
$$y_t = \bE^\bQ \left[ \frac{\Lambda_T}{\Lambda_t} \xi +  \int_t^T \frac{\Lambda_s}{\Lambda_t} f^{0}_s ds \bigg| \cF_t \right] .$$
And for any $0\leq t \leq s \leq T$, 
$$0 < \frac{\Lambda_s}{\Lambda_t}  = \exp \left(\int_t^s \lambda_u du \right)\leq  \exp \left(L_1 (t-s)\right).$$ 
This implies immediately that if $\xi$ and $f^{0}$ are $\bP-a.s.$ bounded, then $y$ is bounded. 

Moreover comparison principle (\cite[Proposition 4]{krus:popi:14}) and stability property (\cite[Propositions 2 and 3]{krus:popi:14}) hold for monotone BSDE.
\begin{Lemma}[Comparison] \label{lem:comp_sol_gene_BSDE}
We consider two generators $f_1$ and $f_2$ satisfying {\bf (H)}. Let $\xi^1$ and $\xi^2$ be two terminal conditions for BSDEs \eqref{eq:general_BSDE} driven respectively by $f_1$ and $f_2$. Denote by $(y^1,z^1,m^1)$ and $(y^2,z^2,m^2)$ the respective solutions in some $\bD^p \times \bH^p \times \bM^p$ with $p>1$. If $\xi^1 \leq \xi^2$ and $f_1(t,y^1_t,z^1_t) \leq f_2(t,y^1_t,z^1_t)$, then a.s. for any $t \in [0,T]$, $y^1_t \leq y^2_t$.
\end{Lemma}
Note that a strict comparison principle does not hold in general (see \cite[Proposition 5.34]{pard:rasc:14} and the comments just after).
\begin{Lemma}[Stability] \label{lem:L2_stability}
Let now $(\xi,f)$ and $(\xi',f')$ be two sets of data each satisfying the assumptions {\bf (H)}. Let $(y,z,m)$ (resp. $(y',z',m')$) denote the solution of the BSDE \eqref{eq:general_BSDE} with data $(\xi,f)$ (resp. $(\xi',f')$). Define
$$(\Delta y,\Delta z,  \Delta m, \Delta \xi, \Delta f) = (y-y',z-z',m-m', \xi - \xi', f-f').$$
Then there exists a constant $C$ depending on $L_1$, $L_2$, $p$ and $T$, such that
\begin{eqnarray*}
&& \E\left[  \sup_{t\in [0,T]} |\Delta y_t|^p + \left( \int_0^T |\Delta z_s|^2 ds \right)^{p/2} + [ \Delta m ]^{p/2}_T \right] \\
&& \qquad \leq  C  \E \left[ |\Delta \xi|^p + \int_0^T |\Delta f(t,y'_t,z'_t) |^p dt  \right].
\end{eqnarray*}
\end{Lemma}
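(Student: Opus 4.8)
The plan is to reduce the stability estimate to the a priori estimate of Lemma \ref{lem:a_priori_estim_BSDE} by linearizing the difference of the two drivers. First I would subtract the two copies of \eqref{eq:general_BSDE}, so that the triple $(\Delta y,\Delta z,\Delta m)$ solves, on $[0,T]$,
\[
\Delta y_t = \Delta \xi + \int_t^T \big[ f(s,y_s,z_s) - f'(s,y'_s,z'_s) \big]\, ds - \int_t^T \Delta z_s\, dW_s - \int_t^T d\Delta m_s .
\]
I would then split the driver difference by inserting the intermediate values $f(s,y'_s,z_s)$ and $f(s,y'_s,z'_s)$,
\[
f(s,y_s,z_s) - f'(s,y'_s,z'_s) = \lambda_s \Delta y_s + \zeta_s \Delta z_s + \Delta f(s,y'_s,z'_s),
\]
where $\Delta f(s,y'_s,z'_s) = f(s,y'_s,z'_s) - f'(s,y'_s,z'_s)$ is the ``free'' part and $\lambda,\zeta$ are the linearization coefficients produced exactly as in \eqref{eq:linear_procedure}.

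The key observation is the control on these coefficients. The scalar coefficient
\[
\lambda_s = \frac{f(s,y_s,z_s) - f(s,y'_s,z_s)}{y_s - y'_s}\, \1_{y_s \neq y'_s}
\]
satisfies $\lambda_s \leq L_1$ by the monotonicity condition {\bf H2} (used at fixed $z=z_s$), while the vector coefficient $\zeta$ is bounded by $L_2$ thanks to the Lipschitz condition {\bf H3}, just as in the linearization recalled before \eqref{eq:density_chgt_measure}. Consequently $(\Delta y,\Delta z,\Delta m)$ is the solution, in $\bD^p \times \bH^p \times \bM^p$, of a BSDE of the form \eqref{eq:general_BSDE} whose generator $\bar f(s,y,z) = \lambda_s y + \zeta_s z + \Delta f(s,y'_s,z'_s)$ is monotone in $y$ with the same constant $L_1$, is $L_2$-Lipschitz in $z$, and has free term $\bar f^0_s = \Delta f(s,y'_s,z'_s)$.

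It then remains to apply the a priori estimate. Fixing any $\alpha > p L_1 + L_2^2/((p-1)\wedge 1)$, Lemma \ref{lem:a_priori_estim_BSDE} applied to this linearized equation gives
\[
\|e^{\alpha \cdot}\Delta y\|_{\bD^p}^p + \|\Delta z\|_{\bH^{p,\alpha}}^p + \|\Delta m\|_{\bM^{p,\alpha}}^p \leq C_p\, \bE\left[ e^{\alpha p T}|\Delta \xi|^p + \int_0^T e^{\alpha p t}|\Delta f(t,y'_t,z'_t)|^p\, dt \right].
\]
Since $\alpha>0$ is now fixed and the weight $e^{\alpha s}$ stays between $1$ and $e^{\alpha T}$ on $[0,T]$, I would drop the exponentials on both sides, absorbing them into a constant $C$ depending only on $L_1,L_2,p,T$; this is precisely the claimed inequality.

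The step I expect to be delicate, and the only genuine departure from the Lipschitz case, is invoking the a priori estimate under mere monotonicity. Here $\lambda$ is controlled only from above ($\lambda_s \leq L_1$) and may be arbitrarily negative, so one cannot bound $|\lambda|$ nor fit $\bar f$ into the growth assumption {\bf H4} with an integrable $\Psi$. The point is that neither Lemma \ref{lem:a_priori_estim_BSDE} nor its underlying It\^o computation on $|\Delta y|^p$ requires a two-sided bound: the drift term $p\,\lambda_s |\Delta y_s|^p$ is dominated using only $\lambda_s \leq L_1$ (which is exactly why the admissible range of $\alpha$ depends on $L_1$ and not on $|\lambda|$), while the $z$-term is absorbed through $|\zeta_s|\leq L_2$ against the convexity constant $c(p)$ from \eqref{def:c(p)}. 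Equivalently, one may first eliminate $\zeta$ by the Girsanov change of measure \eqref{eq:density_chgt_measure} and then use that $\exp(\int_t^s \lambda_u\,du) \leq e^{L_1(s-t)}$ is bounded above, so the one-sidedness remains harmless throughout.
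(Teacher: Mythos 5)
Your argument is correct and is essentially the paper's route: the paper simply cites Propositions 2 and 3 of \cite{krus:popi:14}, which are exactly the It\^o-formula/linearization estimate on $|\Delta y|^p$ that you carry out, using only the one-sided bound $\lambda_s\leq L_1$ from {\bf H2} and $|\zeta_s|\leq L_2$ from {\bf H3}. Your closing remark correctly identifies the only delicate point, namely that Lemma \ref{lem:a_priori_estim_BSDE} must be invoked through its proof (which never needs a two-sided bound on $\lambda$ nor {\bf H4}) rather than as a black box whose hypotheses the linearized generator would not literally satisfy.
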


Let us describe why we assume Condition {\bf H4}, and not some weaker growth condition. Indeed for second order BSDE (Section \ref{sect:2BSDE}) we used that the solution $(y,z,m)$ is obtained by approximation with a sequence of solutions $(y^{n},z^{n},m^{n})$ of Lipschitz BSDE\footnote{Note that this setting is sufficient to solve our control problem. Weaker conditions could be introduced using Mazur's Lemma and this technical point is left for further research. }. This is the reason why polynomial growth of $f$ w.r.t. $y$ is assumed in {\bf H4}, as in the paper \cite{bria:carm:00}. In their work the filtration is generated by the Brownian motion and the generator $f$ satisfies Condition {\bf (H')}. Hence we extend it to our setting.
\begin{Lemma}[Lipschitz approximation]\label{lem:L2_approx}
Assume that {\rm \textbf{(H)}} holds and that $\xi \in \bL^{\bsp \sq}$ and $f^0 \in \bH^{\bsp \sq}$ for some $\bsp  > \varrho/(\varrho-1)$. The solution $(y,z,m)$ of the BSDE \eqref{eq:general_BSDE} belongs to $\bD^{\bsp \sq} \times \bH^{\bsp \sq} \times \bM^{\bsp \sq}$ and is obtained as the limit in $\bD^{p} \times \bH^{p} \times \bM^p$ of a sequence $(y^n,z^n,m^n)$ solution of Lipschitz BSDEs for $p$ satisfying \eqref{eq:cond_int}, {\it i.e.}
$$1 < p \leq \frac{\varrho \bsp}{\varrho + \bsp} < \bsp.$$
\end{Lemma}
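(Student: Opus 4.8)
The plan is to obtain $(y,z,m)$ as a limit of solutions of Lipschitz BSDEs and to read off the convergence from the stability estimate of Lemma~\ref{lem:L2_stability}, the integrability exponents being governed by \eqref{eq:cond_int}. The membership $(y,z,m)\in\bD^{\bsp\sq}\times\bH^{\bsp\sq}\times\bM^{\bsp\sq}$ requires no approximation: under \textbf{(H)} and \eqref{eq:integrability_cond} with $p=\bsp\sq$, existence and uniqueness of $(y,z,m)$ hold by the recalled theory, and Lemma~\ref{lem:a_priori_estim_BSDE} applied with exponent $\bsp\sq$ gives the bound, finite because $\|\xi\|_{\bL^{\bsp\sq}}+\|f^0\|_{\bH^{\bsp\sq}}<+\infty$. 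It then remains to produce the Lipschitz approximants and pass to the limit in the weaker space $\bD^p\times\bH^p\times\bM^p$.

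For the construction I would, assuming w.l.o.g.\ $L_1=0$ (Remark~\ref{rem:H2_L1_equal_0}), first compose $f$ with the projection $\Theta_n$ of $\R$ onto $[-n,n]$ in the $y$-variable, which removes the super-linear growth while preserving the monotonicity \textbf{H2} (since $\Theta_n$ is non-decreasing) and the bound \textbf{H4} with the same $\Psi$ (since $|\Theta_n(y)|\le|y|$), and then inf-convolve in $y$,
$$f_n(t,\omega,y,z):=\inf_{u\in\R}\big\{f(t,\omega,\Theta_n(u),z)+c_n|y-u|\big\},\qquad c_n\uparrow+\infty,$$
which makes $f_n$ Lipschitz in $y$ with the \emph{deterministic} constant $c_n$, keeps it $L_2$-Lipschitz in $z$ and non-increasing in $y$, and satisfies $f_n\to f$ pointwise. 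Each $(\xi,f_n)$ is a genuine Lipschitz BSDE, uniquely solvable by the classical theory in a general filtration, with solution $(y^n,z^n,m^n)$; and since the monotonicity constant ($0$) and the $z$-Lipschitz constant ($L_2$) do not depend on $n$, Lemma~\ref{lem:a_priori_estim_BSDE} provides a bound on $(y^n,z^n,m^n)$ in $\bD^{\bsp\sq}\times\bH^{\bsp\sq}\times\bM^{\bsp\sq}$ uniform in $n$.

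I would then apply Lemma~\ref{lem:L2_stability} to $(\xi,f_n)$ and $(\xi,f)$, evaluating the generator increment at the \emph{fixed} limit solution $(y,z)$ (legitimate because the exploding constant $c_n$ does not enter the stability constant, which only sees $L_1$ and $L_2$), to get
$$\|y^n-y\|_{\bD^p}^p+\|z^n-z\|_{\bH^p}^p+\bE\big[[m^n-m]_T^{p/2}\big]\le C\,\bE\int_0^T\big|f_n(t,y_t,z_t)-f(t,y_t,z_t)\big|^p\,dt.$$
The integrand converges to $0$ pointwise and is dominated, through \textbf{H4}, by $C\,\Psi_t^{p}(1+|y_t|^{\sq})^{p}$, whose integrability holds, by Hölder's inequality with $\Psi\in L^\varrho$ (Condition~\textbf{C1}) and $y\in\bD^{\bsp\sq}$, precisely when $\widehat p=p\bsp/(\bsp-p)\le\varrho$, i.e.\ exactly under \eqref{eq:cond_int}; dominated convergence then drives the right-hand side to $0$. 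The main obstacle, and the only genuinely delicate point, is the interaction between the two requirements on $f_n$: a \emph{deterministic} Lipschitz constant (needed to invoke Lipschitz-BSDE theory) essentially forces an inf/sup-convolution, whereas \emph{uniform-in-$n$} control of the growth (needed for the dominating function above) is threatened by the super-linear growth, since the convolution can shift the minimiser by an amount proportional to the oscillation of $f$ on $[-n,n]$. Reconciling these is the heart of the Briand--Carmona argument \cite{bria:carm:00}; when the naive domination is unavailable one replaces it by a comparison and monotone-convergence argument (Lemma~\ref{lem:comp_sol_gene_BSDE}), using the uniform $\bD^{\bsp\sq}$ bound to secure equi-integrability and hence convergence in $\bD^p\times\bH^p\times\bM^p$ for $p$ as in \eqref{eq:cond_int}.
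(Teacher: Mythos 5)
Your overall architecture (regularize $f$ in $y$, get uniform a priori bounds from Lemma \ref{lem:a_priori_estim_BSDE}, conclude by a stability estimate, and locate the exponent restriction \eqref{eq:cond_int} in the H\"older step $\widehat p = p\bsp/(\bsp-p)\le\varrho$) is the right one, and the first assertion of the lemma is indeed immediate from Lemma \ref{lem:a_priori_estim_BSDE}. But the central analytic step is not established. For your inf-convolution $f_n(y)=\inf_u\{f(\Theta_n(u),z)+c_n|y-u|\}$ the claimed domination of $|f_n(t,y_t,z_t)-f(t,y_t,z_t)|$ by $C\,\Psi_t^{p}(1+|y_t|^\sq)^{p}$ is false uniformly in $n$: since $f$ is non-increasing in $y$, the only available lower bound is $f_n(y)\ge f(n,z)$, and the infimum is genuinely attained near $u=n$ whenever the \emph{random} oscillation $\Psi_t(1+n^\sq)$ of $f(\cdot,z)$ on $[-n,n]$ exceeds the \emph{deterministic} penalty scale $c_n$; on that event $|f_n(t,y_t,z_t)-f(t,y_t,z_t)|$ is of order $\Psi_t\,n^{\sq}$, which no fixed integrable function dominates. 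You notice this obstruction yourself, but the proposed repair (``comparison and monotone convergence \ldots equi-integrability'') is not carried out, and even if it were it would at best yield convergence of $y^n$, not of $(z^n,m^n)$ in $\bH^p\times\bM^p$.

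The paper resolves exactly this difficulty by three devices that are absent from your proposal. First it truncates the data ($\xi_n=n\xi/(n\vee|\xi|)$, $\widetilde h_n=nh/(n\vee|h^0_\cdot|)$) so that the approximate solutions satisfy a uniform pathwise bound $\sup_t|y^n_t|\le\varpi(n)$; the mollified generators are then only \emph{locally} monotone on $\overline{B(0,\varpi(n))}$, but that ball contains every $y^n$, so the It\^o computation is run by hand there (one cannot invoke Lemma \ref{lem:L2_stability} off the shelf for these approximants). Second, the generator error is evaluated at the approximate solutions $y^n$ rather than at the limit, and is split into a compact part (uniform convergence of the mollifications on compacts plus dominated convergence) and a tail part controlled by Chebyshev, H\"older and Young via the uniform $\bD^{p\sq}$ bound. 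Third, the random growth factor $\Psi$ is removed only in a separate final step through the truncation $f_n=(f-f^0)\,n/(\Psi\vee n)+f^0$, where $|f_\ell(u,y^n_u,z^n_u)-f_n(u,y^n_u,z^n_u)|\le\Psi(u)\1_{\Psi(u)\ge n\wedge\ell}(1+|y^n_u|^\sq)$ and H\"older with $p\bsp/(\bsp-p)\le\varrho$ closes the estimate. Without these steps, or a fully worked substitute, your argument has a gap at precisely the point where the superlinear monotone growth collides with the requirement of a deterministic Lipschitz constant.
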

\begin{proof}
The first part of the result is a direct consequence of Lemma \ref{lem:a_priori_estim_BSDE}. Let us now only explain the second assertion. We will adapt the result of \cite{bria:carm:00} and we refer to this paper for the details. We only give the main arguments. W.l.o.g. we can assume in this proof that $L_1=0$ (just consider $\bar y_t= e^{-L_1t} y_t$, $\bar z_t= e^{-L_1t} z_t$, $\bar m_t= e^{-L_1t} m_t$, instead of $(y,z,m)$). 

\vspace{0.5cm}
\noindent {\bf Step 1.} First we consider the following BSDE 
\begin{equation}  
y_t = \xi +  \int_t^T  f(s,y_s,v_s) ds  -\int_t^Tz_s dW_s - \int_t^T dm_s
\end{equation}
where $v$ belongs to $\bH^{\bsp \sq}$ and $f$ satisfies {\bf (H')}. We denote by $h$ the function $h(t,y) = f(t,y,v_t)$. This function $h$ satisfies Conditions {\bf H1}-{\bf H2} and {\bf H4'}. And $h^0_t = h(t,0) \in \bH^{\bsp \sq}$. We construct a sequence of Lipschitz functions $h_n$ which approximate $h$. Let $\varsigma : \bR \mapsto \bR_+$ be a non-negative function with the unit ball for support and such that $\int \varsigma(u)du = 1$ and define for each integer $n>1$, $\varsigma_n(u) =n\varsigma(nu)$. We denote also for each integer $n$, by $\Theta_n$, a $C^\infty$ function from $\bR$ to $\bR_+$ such that $0\leq \Theta_n\leq 1$, $\Theta_n(u)=1$ for $|u | \leq n$ and $\Theta_n(u)=0$ as soon as $|u|>n+1$. We set 
$$\xi_n  = \frac{n\xi}{n\vee |\xi|},\quad \widetilde h_n(t,y) = \frac{n h(t,y) }{n\vee |h^0_t|}.$$
Moreover
$$\varpi(n) = \lfloor e^{1/2}(n+2L_\sq)\sqrt{1+T^2} \rfloor +1 $$
where $\lfloor r\rfloor $ is the integer part of $r$, $L_\sq$ coming from {\bf H4'}, and we define as the convolution product 
\begin{equation} \label{eq:lip_approx_gene_2}
h_n (t,.) = \varsigma_n *  (\Theta_{\varpi(n+1)} \widetilde h_n(t,.)) ,\ t \in [0,T].
\end{equation}
This function $h_n$ is globally Lipschitz w.r.t. $y$ uniformly in $t$ and $\omega$ with $|h_n(t,0)|\leq n\wedge |h^0_t| + 2L_\sq$. Moreover for any $(t,\omega,y)$
$$yh_n(t,y) \leq ((n\wedge |h^0_t|) + 2L_\sq)|y|,$$
and for any $y$ and $y'$ in the ball $\overline{B(0,\varpi(n))}$ then 
$$(y-y')(h_n(t,y)-h_n(t,y')) \leq 0.$$
In other words $h_n$ is only locally monotone (only in a given ball with the radius depending on $n$). Let $(y^n,z^n,m^n)$ be the unique solution of the BSDE
\begin{equation}  
y^n_t = \xi_n +  \int_t^T  h_n(u,y^n_u) du  -\int_t^T z^n_u dW_u - \int_t^T dm^n_u
\end{equation}
in $\bD^{\bsp \sq} \times \bH^{\bsp  \sq} \times \bM^{\bsp  \sq}$. This solution verifies (see \cite[Proposition 2.1]{bria:carm:00})
$$\sup_{t \in [0,T]} |y^n_t| \leq (n+2L_\sq)e^{1/2}\sqrt{1+T^2}.$$
Hence $y^n_t$ is in $\overline{B(0,\varpi(n))}$. And from Lemma \ref{lem:a_priori_estim_BSDE} we also have for any $1 < p\leq \bsp$ and for some $\alpha$ large enough
$$\sup_{n\in \bN^*} \left[ \|e^{\alpha.}y^n\|_{ \bD^{p\sq} } + \|z^n \|_{ \bH^{p\sq,\alpha} } + \|m^n\|_{ \bM^{p\sq,\alpha}} \right] \leq C_{p} \bE \left[ e^{p\alpha \sq T}|\xi|^{p\sq} + \int_0^T e^{\alpha t} (|h^{0}_t| +2L_\sq)^{p\sq} dt \right].$$

Now we fix two integers $\ell$ and $n$ such that $\ell\geq n$ and
$$\delta y = y^{\ell} - y^n,\quad \delta z = z^{\ell}-z^n, \quad \delta m = m^{\ell}-m^n,\quad \delta \xi = \xi_{\ell} - \xi_n.$$
For $1 < p\leq \bsp$, with $c(p)$ defined by \eqref{def:c(p)}, we use It\^o's formula
\begin{eqnarray*}
&& \left| \delta y_t \right|^p + c(p) \int_t^T \left| \delta y_{u^-} \right|^{p-2}\1_{\delta y_{u-} \neq 0} |\delta z_u|^2 du +c(p) \int_t^T \left| \delta y_{u-} \right|^{p-2}\1_{\delta y_{u-} \neq 0} d[\delta m]^c_u  \\
&&+ \sum_{t< u \leq T}  \left[ |\delta y_{u-}+\Delta (\delta m)_u|^p - |\delta y_{u-}|^p - p\phi_p(\delta y_{u-})  \Delta (\delta m)_u \right] \\
&&\quad = |\delta \xi|^p  + p \int_t^T \left[ h_\ell(u,y^\ell_u) - h_n(u,y^n_u) \right]  \phi_p(\delta y_{u-}) du \\
& & \qquad  - p \int_t^T \phi_p(\delta y_{u-}) \delta z_u dW_u  -p  \int_t^T \phi_p(\delta y_{u-}) d(\delta m)_u  \\
&&\quad = |\delta \xi|^p  + p \int_t^T \left[ h_\ell(u,y^\ell_u) - h_\ell(u,y^n_u) \right]  \phi_p(\delta y_{u-}) du \\
&& \qquad +  p \int_t^T \left[ h_\ell(u,y^n_u) - h_n(u,y^n_u) \right] \phi_p(\delta y_{u-}) du \\
& & \qquad  - p \int_t^T \phi_p(\delta y_{u-}) \delta z_u dW_u  -p  \int_t^T\phi_p(\delta y_{u-}) d(\delta m)_u  
\end{eqnarray*}
Since $|y^n_t|\leq \varpi(n) \leq \varpi(\ell)$, we use the local monotonicity of $h_\ell$ and we obtain:
\begin{eqnarray*}
&& \left| \delta y_t \right|^p + c(p) \int_t^T \left| \delta y_{u-} \right|^{p-2}\1_{\delta y_{u-} \neq 0} |\delta z_u|^2 du +c(p) \int_t^T \left| \delta y_{u-} \right|^{p-2}\1_{\delta y_{u-} \neq 0} d[\delta m]^c_u  \\
&&+ \sum_{t< u \leq T}  \left[ |\delta y_{u-}+\Delta (\delta m)_u|^p - |\delta y_{u-}|^p - p\phi_p(\delta y_{u-})  \Delta (\delta m)_u \right] \\
&&\quad \leq |\delta \xi|^p  + p \int_t^T \left[ h_\ell(u,y^n_u) - h_n(u,y^n_u) \right] \phi_p(\delta y_{u-}) du \\
& & \qquad  - p \int_t^T \phi_p(\delta y_{u-}) \delta z_u dW_u  -p  \int_t^T\phi_p(\delta y_{u-}) d(\delta m)_u  
\end{eqnarray*}
Since the set $\{\delta y_u \neq \delta y_{u-}\}$ is countable, arguing as in the proof of \cite[Proposition 3]{krus:popi:14}, we deduce that there exists a constant $C$ such that 
\begin{eqnarray} \nonumber
&& \bE \left[ \sup_{t\in [0,T]} \left| \delta y_t \right|^p +  \left( \int_0^T |\delta z_u|^2 du \right)^{p/2}+ ([\delta m]_T)^{p/2} \right] \\ \label{eq:strong_conv_L_2}
&& \qquad \leq  C \bE \left[ |\delta \xi |^p + \int_0^T \left| h_\ell(u,y^n_u) - h_n(u,y^n_u) \right| |\delta y_{u}|^{p-1} du\right].
\end{eqnarray}
Since $\xi \in \bL^{\bsp \sq}$, then $\delta \xi$ goes to zero in $\bL^p$ as $n$ and $\ell$ tend to $+\infty$. For any given number $k$, we put 
\begin{eqnarray*}
S^\ell_n & = & \bE \left[  \int_0^T \1_{\{(|y^n_u|+|y^\ell_u|)\leq k\}}\left| h_\ell(u,y^n_u) - h_n(u,y^n_u) \right| |\delta y_u|^{p-1} du\right] \\
R^\ell_n & = &  \bE \left[  \int_0^T \1_{\{(|y^n_u|+|y^\ell_u|)\geq k\} }\left| h_\ell(u,y^n_u) - h_n(u,y^n_u) \right| |\delta y_u|^{p-1} du\right] .
\end{eqnarray*}
With these notations we have 
\begin{eqnarray} \nonumber
&& \bE \left[\int_0^T \left| h_\ell(u,y^n_u) - h_n(u,y^n_u) \right| |\delta y_u|^{p-1} du\right]  = S^\ell_n  + R^\ell_n \\ \label{eq:decomp_S_R}
&& \quad \leq C_p k^{p-1} \bE \left[ \sup_{|y|\leq k} \int_0^T \left| h_\ell(u,y) - h_n(u,y) \right| du\right] + R^\ell_n.
\end{eqnarray}
Since $h(s,.)$ is continuous ($\bP$-a.s., for every $s$), $h_n(s,.)$ converges towards $h(s,.)$ uniformly on compact sets. Taking into account that 
$$\sup_{|y|<k} |h_n(s,y)| \leq |h(s,0)| +2^\sq L_\sq(1+k^\sq),$$
Lebesgue's dominated convergence theorem implies that for any fixed number $k$, the quantity
$$C_pk^{p-1} \bE \left[ \sup_{|y|\leq k} \int_0^T \left| h_\ell(u,y) - h_n(u,y) \right| du\right] $$
goes to 0 as $n$ tends to infinity uniformly with respect to $\ell$. The proof will be finished if we prove the convergence of the rest $R^\ell_n$. Using H\"older's inequality we get the following upper bound:
\begin{eqnarray} \label{eq:bound_on_R_m_n}
R^\ell_n &\leq &  \left[\bE \int_0^T \1_{\{(|y^n_u|+|y^\ell_u|)\geq k\}} du \right]^{\frac{(\sq-1)(p-1)}{p\sq}} \\ \nonumber
&& \quad \times  \left[\bE \int_0^T \left| h_\ell(u,y^n_u) - h_n(u,y^n_u) \right|^{\frac{p\sq}{\sq+p-1}} |\delta y_u|^{\frac{p(p-1)\sq}{\sq+p-1}} du\right]^{\frac{\sq+p-1}{p\sq}}.
\end{eqnarray}
For the first term in the product we use Chebychev's inequality:
 \begin{eqnarray} \nonumber
&& \left[\bE \int_0^T \1_{\{(|y^n_u|+|y^\ell_u|)\geq k\}} du\right]^{\frac{(\sq-1)(p-1)}{p\sq}}  \leq  k^{(1-\sq)(p-1)}\left[ \bE \int_0^T  (|y^n_u|+|y^\ell_u|)^{p\sq} du\right]^{\frac{(\sq-1)(p-1)}{p\sq}} \\ \label{eq:first_ineq_R_m_n}
&& \quad \leq  k^{(1-\sq)(p-1)} 2^{p\sq-1} T^{\frac{(\sq-1)(p-1)}{p\sq}} \left[\sup_{n\in \bN^*} \bE \left(\sup_{t\in [0,T]} (|y^n_u|)^{p\sq}\right) \right]^{\frac{(\sq-1)(p-1)}{p\sq}}.  
\end{eqnarray}
Remember that the above expectation is bounded uniformly w.r.t. $n$. Thus the right-hand side of \eqref{eq:first_ineq_R_m_n} is uniformly bounded. We have to control
\begin{equation} \label{eq:def_A_m_n}
A^\ell_n =\bE  \int_0^T \left| h_\ell(u,y^n_u) - h_n(u,y^n_u) \right|^{\frac{p\sq}{\sq+p-1}} |\delta y_u|^{\frac{p(p-1)\sq}{\sq+p-1}} du.
\end{equation}
By Young's inequality
 \begin{eqnarray*}
A^\ell_n & \leq & \frac{p-1}{\sq+p-1} \bE \int_0^T |\delta y_u|^{p\sq} du + \frac{\sq}{\sq+p-1}  \bE \int_0^T\left| h_\ell(u,y^n_u) - h_n(u,y^n_u) \right|^{p} du \\
& \leq & C  \sup_{n\in \bN^*} \bE \left(\sup_{t\in [0,T]} (|y^n_u|)^{p\sq}\right)+  C \bE  \int_0^T \left| h_\ell(u,y^n_u) - h_n(u,y^n_u) \right|^{p} du \\
&\leq & 2 C \sup_{n\in \bN^*} \bE \left(\sup_{t\in [0,T]} (|y^n_u|)^{p\sq}\right) + C \bE  \int_0^T \left( \left| f^0_u \right|^{p} + \|v_u\|^{p\sq} \right) du.
\end{eqnarray*}
Thus $A^\ell_n$ remains bounded w.r.t. $n$ and $\ell$. Collecting \eqref{eq:strong_conv_L_2}, \eqref{eq:decomp_S_R}, \eqref{eq:first_ineq_R_m_n}, \eqref{eq:bound_on_R_m_n} with \eqref{eq:def_A_m_n}, we deduce that there exists a constant $C$ such that for any $k$ and $\eps > 0$, there exists $N$ such for $\ell\geq N$ and $n\geq N$:
\begin{equation*} 
\bE \left[ \sup_{t\in [0,T]} \left| \delta y_t \right|^p + \left(  \int_0^T |\delta z_u|^2 du\right)^{p/2} + [\delta m]^{p/2}_T \right] \leq C \frac{1}{k^{(\sq-1)(p-1)}} + \eps.
\end{equation*}
Since we can fix $k$ large enough to ensure that the right-hand side is smaller than $2\eps$, we deduce the convergence result.

\vspace{0.5cm}
\noindent {\bf Step 2.} We consider now the general BSDE \eqref{eq:general_BSDE}, but with Condition {\bf (H')}. The Lipschitz approximation will be obtained by a fixed point argument in $\bD^p \times \bH^p \times \bM^p$, arguing as in the proof of \cite[Theorem 3.6]{bria:carm:00}, with straightforward modifications.

\vspace{0.5cm}
\noindent {\bf Step 3.} For the more general growth condition {\bf H4}, consider 
$$f_n(t,y,z) = (f(t,y,z)-f(t,0,0)) \frac{n}{\Psi(t) \vee n} + f(t,0,0).$$
Then $f_n$ is still Lipschitz continuous w.r.t. $z$, continuous and monotone w.r.t. $y$ and satisfies:
$$|f_n(t,y,0)- f_n(t,0,0)| \leq   \frac{n\Psi(t) }{\Psi(t) \vee n}(1+|y|^\sq)\leq n(1+|y|^\sq).$$
$f_n$ satisfies {\bf H4'} with $L_\sq=n$. Thus there is a sequence $(y^n,z^n,m^n)$ of solutions for the BSDE with generator $f_n$. As before let us define 
$$\delta y = y^\ell - y^n, \quad \delta z = z^\ell-z^n, \quad \delta m = m^\ell-m^n.$$
We use It\^o's formula for $1< p\leq  \frac{\varrho \bsp}{\varrho + \bsp}$ and $\alpha > \frac{p}{2(p-1)} L_2^2$
\begin{eqnarray*}
&&e^{\alpha t} \left| \delta y_t \right|^p + c(p) \int_t^Te^{\alpha u} \left| \delta y_{u-} \right|^{p-2}\1_{\delta y_{u-} \neq 0} |\delta z_u|^2 du +c(p) \int_t^Te^{\alpha u} \left| \delta y_{u-} \right|^{p-2}\1_{\delta y_{u-} \neq 0} d[\delta m]^c_u  \\
&&+ \sum_{t< u \leq T}  \left[ |\delta y_{u-}+\Delta (\delta m)_u|^p - |\delta y_{u-}|^p - p\phi_p(\delta y_{u-})  \Delta (\delta m)_u \right]\\
&&\quad =  p \int_t^T e^{\alpha u}\left[ f_\ell(u,y^\ell_u,z^\ell_u) - f_n(u,y^n_u,z^n_u) \right] \phi_p(\delta y_{u-}) du \\
& & \qquad -\int_t^T \alpha e^{\alpha u}\left| \delta y_u  \right|^p du - p \int_t^T e^{\alpha u}\phi_p(\delta y_{u-}) \delta z_u dW_u \\
&& \qquad -p  \int_t^T e^{\alpha u}\phi_p(\delta y_{u-}) d(\delta m)_u \\
&&\quad \leq \left( \frac{p L_2^2}{(p-1)} -\alpha \right) \int_t^T e^{\alpha u}|\delta  y_u |^p du + \frac{c(p)}{2}\int_t^T e^{\alpha u}\left| \delta y_{u-} \right|^{p-2} \1_{\delta y_{u-} \neq 0}  |\delta z_u |^2 du \\
&& \qquad + p \int_t^T e^{\alpha u}\left[ f_\ell(u,y^n_u,z^n_u) - f_n(u,y^n_u,z^n_u) \right] \phi_p(\delta y_{u-}) du\\
& & \qquad - p \int_t^T e^{\alpha u}\phi_p(\delta y_{u-}) \delta z_u dW_u -p  \int_t^T e^{\alpha u}\phi_p(\delta y_{u-}) d(\delta m)_u .
\end{eqnarray*}
Moreover
$$\left| f_\ell(u,y^n_u,z^n_u) - f_n(u,y^n_u,z^n_u) \right| \leq (\Psi(u) \1_{\Psi(u) \geq n\wedge \ell}) \left( 1 + |y^n_u|^\sq \right).$$
Young's inequality implies that for any $\mathfrak{k} > 0$
\begin{eqnarray*}
&& \left| e^{\alpha u}\left[ f_\ell(u,y^n_u,z^n_u) - f_n(u,y^n_u,z^n_u) \right] \phi_p(\delta y_u) \right| \\
&& \quad \leq \frac{e^{\alpha u}}{p\mathfrak{k}^{p-1}} (\Psi(u) \1_{\Psi(u) \geq n\wedge \ell})^p \left( 1 + |y^n_u|^\sq \right)^p + \mathfrak{k} \frac{p-1}{p} e^{\alpha u} |\delta y_u|^{p}. 
\end{eqnarray*}
Therefore 
\begin{eqnarray*}
&&e^{\alpha t} \left| \delta y_t \right|^p + \frac{c(p)}{2} \int_t^Te^{\alpha u} \left| \delta y_u \right|^{p-2}\1_{\delta y_u \neq 0} |\delta z_u|^2 du \\
&& +c(p) \int_t^Te^{\alpha u} \left| \delta y_{u-} \right|^{p-2}\1_{\delta y_{u-} \neq 0} d[\delta m]^c_u \\
&& + \sum_{t< u \leq T} \left[ |\delta y_{u-}+\Delta (\delta m)_u|^p - |\delta y_{u-}|^p - p\phi_p(\delta y_{u-})  \Delta (\delta m)_u \right] \\
&&\quad  \leq \left( \frac{p L_2^2}{(p-1)} -\alpha + \mathfrak{k} \frac{p-1}{p} \right) \int_t^T e^{\alpha u}|\delta  y_u |^p du \\
&& \qquad + \frac{1}{p\mathfrak{k}^{p-1}} \int_t^T e^{\alpha u} (\Psi(u) \1_{\Psi(u) \geq n\wedge \ell})^p \left( 1 + |y^n_u|^\sq \right)^p du\\
& & \qquad - p \int_t^T e^{\alpha u}\phi_p(\delta y_{u-}) \delta z_u dW_u -p  \int_t^T e^{\alpha u}\phi_p(\delta y_{u-}) d(\delta m)_u . 
\end{eqnarray*}
Fix $\alpha$ large enough such that 
$$\alpha > \frac{p L_2^2}{(p-1)} +\mathfrak{k}  \frac{p-1}{p},$$
and take the expectation. Martingale terms are true martingales thus with H\"older's inequality
\begin{eqnarray*}
&& \frac{c(p)}{2} \bE \int_0^Te^{\alpha u} \left| \delta y_{u-} \right|^{p-2}\1_{\delta Y_{u-} \neq 0} |\delta z_u|^2 du + c(p) \bE \int_0^Te^{\alpha u} \left| \delta y_{u-} \right|^{p-2}\1_{\delta y_{u-} \neq 0} d[\delta m]^c_u \\
&& + \bE \sum_{0< u \leq T}\left[ |\delta y_{u-}+\Delta (\delta m)_u|^p - |\delta y_{u-}|^p - p\phi_p(\delta y_{u-})  \Delta (\delta m)_u \right]\\
&&\quad  \leq  \frac{1}{p\mathfrak{k}^{p-1}} \bE \int_0^T e^{\alpha u} (\Psi(u) \1_{\Psi(u) \geq n\wedge \ell})^p \left( 1 + |y^n_u|^\sq \right)^p du \\
&& \quad \leq \frac{1}{p\mathfrak{k}^{p-1}} \left(\bE \int_0^T e^{\alpha u} (\Psi(u) \1_{\Psi(u) \geq n\wedge \ell})^{p\bsp/(\bsp-p)}  du \right)^{(\bsp-p)/\bsp}\\
&& \qquad \qquad \times \left( \bE \int_0^T e^{\alpha u}  \left( 1 + |y^n_u|^\sq \right)^{\bsp} du \right)^{p/\bsp}.
\end{eqnarray*}
But for $p\leq\frac{\bsp \varrho}{\bsp + \varrho} < \bsp$, we have $p\bsp/(\bsp-p) \leq \varrho$. Hence the right-hand side of the previous inequality goes to zero as $\ell$ and $n$ tend to $+\infty$. Now we proceed as in the proof of Proposition 3 in \cite[Proposition 3]{krus:popi:14} and we deduce that the sequence $(y^n,z^n,m^m)$ converges in $\bD^p\times \bH^{p} \times \bM^p$ to $(y,z,m)$.

\end{proof}

\begin{Remark} \label{rem:on_lipschitz_approx}
The condition $\bsp > \varrho/(\varrho-1)$ is equivalent to $1< \frac{ \varrho \bsp}{\varrho + \bsp}$. 
\begin{itemize}
\item If {\bf H4'} holds, then $\varrho = +\infty$ and $p=\bsp$. 

\item If everything is bounded ($\xi$ and $f^0_s$), then $y$ is also bounded and we only need $\varrho > 1$.

\end{itemize}
\end{Remark}

Finally let us recall a technical but crucial lemma, called Lemma A.2 in \cite{poss:tan:zhou:15}. The result is the same, but the proof has to be modified since $f$ is no more Lipschitz continuous in $y$.
\begin{Lemma}
For any $\bF$-stopping times $0\leq r \leq \mathfrak{u} \leq \tau \leq T$, any decreasing sequence of $\bF$-stopping times $(\tau_n)_{n\geq 1}$ converging $\bP$-a.s. to $\tau$ and any $\bF$- progressively measurable and right-continuous process $V \in \bD^{\bsp \sq}$, if $y(.,V_.)$ denotes the first component of the solution to the BSDE with terminal condition $V_.$ and some generator $f$ satisfying {\bf (H)}, we have
$$\lim_{n\to +\infty} \bE\left[ | y_\mathfrak{u}(\tau,V_\tau) - y_\mathfrak{u}(\tau_n,V_{\tau_n}) | \right] =0.$$
\end{Lemma}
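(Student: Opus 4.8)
The plan is to reduce, via the flow property of BSDEs and the stability estimate of Lemma \ref{lem:L2_stability}, the whole statement to the single convergence
$$\lim_{n\to+\infty}\bE\left[\big| y_\tau(\tau_n,V_{\tau_n}) - V_\tau\big|^p\right] = 0$$
for some $p>1$ satisfying \eqref{eq:cond_int}, and then to establish this using the growth condition {\bf H4} together with the integrability assumption {\bf C1}. First I would invoke the flow (dynamic programming) property of monotone BSDEs, a direct consequence of uniqueness: since $\mathfrak{u}\leq \tau\leq \tau_n$, one has $y_\mathfrak{u}(\tau_n,V_{\tau_n}) = y_\mathfrak{u}\big(\tau, y_\tau(\tau_n,V_{\tau_n})\big)$. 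Thus $y_\mathfrak{u}(\tau,V_\tau)$ and $y_\mathfrak{u}(\tau_n,V_{\tau_n})$ are the values at time $\mathfrak{u}$ of two BSDEs on $[\mathfrak{u},\tau]$ with the \emph{same} generator $f$ but terminal data $V_\tau$ and $y_\tau(\tau_n,V_{\tau_n})$. Applying Lemma \ref{lem:L2_stability} on $[\mathfrak{u},\tau]$ (so the generator difference vanishes) and Jensen's inequality gives $\bE| y_\mathfrak{u}(\tau,V_\tau)-y_\mathfrak{u}(\tau_n,V_{\tau_n})| \leq C\,\bE[|V_\tau - y_\tau(\tau_n,V_{\tau_n})|^p]^{1/p}$, which is the announced reduction.

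Next I would split $y_\tau(\tau_n,V_{\tau_n})-V_\tau = \big(y_\tau(\tau_n,V_{\tau_n})-y_\tau(\tau_n,V_\tau)\big) + \big(y_\tau(\tau_n,V_\tau)-V_\tau\big)$. The first bracket is again controlled by Lemma \ref{lem:L2_stability}, now on $[\tau,\tau_n]$ and with the same generator, by $C\,\bE[|V_{\tau_n}-V_\tau|^p]^{1/p}$; this tends to $0$ because $V$ is right-continuous and $V\in\bD^{\bsp\sq}$ supplies the uniform integrability (here $p<\bsp\sq$) needed to upgrade the a.s.\ convergence $V_{\tau_n}\to V_\tau$ to $L^p$ convergence. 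For the second bracket, write the BSDE on $[\tau,\tau_n]$ with constant terminal value $V_\tau$; since $y_\tau(\tau_n,V_\tau)$ is $\cF_\tau$-measurable, conditioning on $\cF_\tau$ annihilates both martingale terms and leaves $y_\tau(\tau_n,V_\tau)-V_\tau = \bE\big[\int_\tau^{\tau_n} f(u,y_u,z_u)\,du\,\big|\,\cF_\tau\big]$, whence by conditional Jensen $\bE[|y_\tau(\tau_n,V_\tau)-V_\tau|^p]\leq \bE\big[(\int_\tau^{\tau_n}|f(u,y_u,z_u)|\,du)^p\big]$.

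The final and main step is to show that this last quantity tends to $0$. Using {\bf H3} and {\bf H4}, I bound $|f(u,y_u,z_u)|\leq |f^0_u|+\widehat\Psi_u(1+|y_u|^\sq)+L_2\|z_u\|$ and treat the four resulting contributions separately. The terms $\int_\tau^{\tau_n}|f^0_u|\,du$ and $\int_\tau^{\tau_n}\widehat\Psi_u\,du$ vanish in $L^p$ by dominated convergence, since $\tau_n\downarrow\tau$ and the dominating integrals over $[0,T]$ are integrable under {\bf C1}; the term $\int_\tau^{\tau_n}L_2\|z_u\|\,du \leq L_2(\tau_n-\tau)^{1/2}(\int_\tau^{\tau_n}\|z_u\|^2\,du)^{1/2}$ vanishes by Hölder's inequality, the factor $(\tau_n-\tau)^{1/2}\to 0$ and the uniform bound on $z$ in $\bH^{\bsp\sq}$ coming from Lemma \ref{lem:a_priori_estim_BSDE} (the terminal time only shrinks and $\tau_n\leq T$, so the a priori estimate is uniform in $n$).

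The genuinely new difficulty, absent in the Lipschitz setting of \cite{poss:tan:zhou:15}, is the nonlinear term $\int_\tau^{\tau_n}\widehat\Psi_u|y_u|^\sq\,du$, and I expect this to be the crux. I would bound it by $\big(\sup_{\tau\leq u\leq \tau_n}|y_u|^{\sq}\big)\int_\tau^{\tau_n}\widehat\Psi_u\,du$ and apply Hölder with conjugate exponents $\bsp/p$ and $\bsp/(\bsp-p)$: the supremum factor stays bounded uniformly in $n$ because $V_\tau\in\bL^{\bsp\sq}$ and Lemma \ref{lem:a_priori_estim_BSDE} yield a uniform $\bD^{\bsp\sq}$-estimate on the solutions, while the remaining factor involves $(\int_\tau^{\tau_n}\widehat\Psi_u\,du)^{\widehat p}$ with $\widehat p = p\bsp/(\bsp-p)\leq\varrho$ by \eqref{eq:cond_int}, hence is dominated by an integrable function and vanishes by dominated convergence. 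Collecting the four contributions and passing from $L^p$ to $L^1$ by Jensen completes the argument; the delicate point is precisely this last estimate, where the interplay between the polynomial growth exponent $\sq$, the integrability exponent $\varrho$ of $\widehat\Psi$, and the condition $\bsp>\varrho/(\varrho-1)$ must be respected exactly through the relation $\widehat p\leq\varrho$.
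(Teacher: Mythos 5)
Your proposal is correct, and its skeleton coincides with the paper's: both start from the flow property $y_\mathfrak{u}(\tau_n,V_{\tau_n})=y_\mathfrak{u}(\tau,y_\tau(\tau_n,V_{\tau_n}))$ plus the stability estimate of Lemma \ref{lem:L2_stability} to reduce everything to $\bE[|V_\tau-y_\tau(\tau_n,V_{\tau_n})|^p]\to 0$, and both identify the nonlinear term $\widehat\Psi_u(1+|y_u|^\sq)$ as the crux, handled by H\"older with $\widehat p=p\bsp/(\bsp-p)\leq\varrho$, the uniform-in-$n$ a priori bound in $\bD^{\bsp\sq}$ from Lemma \ref{lem:a_priori_estim_BSDE}, and dominated convergence. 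Where you diverge is the middle step. The paper does \emph{not} split $y_\tau(\tau_n,V_{\tau_n})-V_\tau$ into two brackets; instead it uses the partial linearization \eqref{eq:linear_procedure}--\eqref{eq:density_chgt_measure} in the $z$-variable only, writing $y_\tau(\tau_n,V_{\tau_n})=\bE[\cE(\int_\tau^{\tau_n}\zeta_s dW_s)(V_{\tau_n}+\int_\tau^{\tau_n}f(s,y_s,0)ds)\,|\,\cF_\tau]$, so that the Dol\'eans--Dade exponential (which has moments of every order since $\zeta$ is bounded by $L_2$) absorbs the $z$-dependence of the generator entirely; the resulting bound involves only $|V_\tau-V_{\tau_n}|$, $\int_\tau^{\tau_n}|f^0|$ and $\int_\tau^{\tau_n}\widehat\Psi(1+|y|^\sq)$. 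You instead keep the raw generator, which produces the extra term $L_2\int_\tau^{\tau_n}\|z_u\|du$, and you dispose of it via Cauchy--Schwarz in time, $(\tau_n-\tau)^{1/2}(\int_\tau^{\tau_n}\|z_u\|^2du)^{1/2}$, together with the uniform $\bH^{\bsp\sq}$ bound on $z^{(n)}$ (one more H\"older in $\omega$ is needed there to separate the two factors, since $z^{(n)}$ depends on $n$, but the a priori estimate makes this routine). Your route buys a more elementary argument with no change of measure; the paper's buys one fewer term to estimate. Both are sound, and the exponent bookkeeping ($p$ satisfying \eqref{eq:cond_int}, $\widehat p\leq\varrho$, $V_\tau\in\bL^{\bsp\sq}$) is respected in your version.
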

\begin{proof}
By classical stability result, for any $\kappa < p$, there exists a constant $C$ depending only on $T$, $\kappa$, $L_1$ and $L_2$ such that
\begin{eqnarray*}
\bE \left[ | y_\mathfrak{u}(\tau,V_\tau) - y_\mathfrak{u}(\tau_n,V_{\tau_n}) | \right] & = & \bE \left[ | y_\mathfrak{u}(\tau,V_\tau) - y_\mathfrak{u}(\tau, y_\tau(\tau_n,V_{\tau_n})) | \right] \\
& \leq & C  \bE \left[ | V_\tau  - y_\tau(\tau_n,V_{\tau_n}) |^\kappa \right]
\end{eqnarray*}
Compared to the proof of \cite[Lemma A.2]{poss:tan:zhou:15} we do not use the complete linearization argument. But we strongly use the growth condition {\bf H4} with the {\it a priori} estimate given in Lemma \ref{lem:a_priori_estim_BSDE}. Indeed we only write that 
$$y_\tau (\tau_n,V_{\tau_n}) = \bE \left[ \cE \left( \int_\tau^{\tau_n} \zeta_s dW_s \right) \left( V_{\tau_n} + \int_\tau^{\tau_n} f(s,y_s(\tau_n,V_{\tau_n}),0)ds\right) \bigg| \cF_\tau \right].$$
Then 
\begin{eqnarray*}
&& \bE \left[ | y_\mathfrak{u}(\tau,V_\tau) - y_\mathfrak{u}(\tau_n,V_{\tau_n}) | \right]  \leq  C  \bE \left[ \cE \left( \int_\tau^{\tau_n} \zeta_s dW^\bP_s \right)^\kappa | V_\tau  - V_{\tau_n} |^\kappa \right] \\
& &\qquad +  C  \bE \left[ \cE \left( \int_\tau^{\tau_n} \zeta_s dW_s \right)^\kappa \int_\tau^{\tau_n} | f(s,y_s(\tau_n,V_{\tau_n}),0) |^\kappa ds \right] \\
&& \quad  \leq  C   \bE \left[ \cE \left( \int_\tau^{\tau_n} \zeta_s dW_s \right)^\kappa | V_\tau  - V_{\tau_n} |^\kappa \right] \\
& &\qquad +  C  \bE \left[ \cE \left( \int_\tau^{\tau_n} \zeta_s dW_s \right)^\kappa \int_\tau^{\tau_n} | f(s,0,0) |^\kappa ds \right] \\
& &\qquad +  C   \bE\left[ \cE \left( \int_\tau^{\tau_n} \zeta_s dW_s \right)^\kappa \int_\tau^{\tau_n} \Psi_s^\kappa \left( 1+| y_s(\tau_n,V_{\tau_n})|^\sq\right)^\kappa ds \right] . 
\end{eqnarray*}
Since $\zeta$ is bounded (by $L_2$), the Dol\'eans-Dade exponential appearing above has finite moments of any order. Now since we have an {\it a priori} estimate on $y_.(\tau_n,V_{\tau_n})$ in $\bD^{\bsp \sq}$, uniformly in $n$, we argue as in \cite{poss:tan:zhou:15} to conclude.
\end{proof}

\subsection{Reflected BSDE with monotone driver} \label{sect:mono_RBSDE}

In this section we extend the results contained in \cite{bouc:poss:zhou:15}, where the driver $f$ is supposed to be Lipschitz continuous w.r.t. $y$ and $z$. One of the main contributions of \cite{bouc:poss:zhou:15} is the existence of a solution for reflected BSDE in a general filtration, without quasi-left continuity condition. Here we follow the same scheme but for monotone generators satisfying hypothesis {\bf (H)}. Thus we do not give all details but we point out the differences. 

Let us remark that our condition {\bf H4} on the growth of the driver $f$ or the integrability assumption {\bf C1} on the terminal value $\xi$ and $f^0$ are not optimal, compared to the conditions imposed in \cite{klim:12} for example (see also among others \cite{klim:13b,klim:15,lepe:mato:xu:05} for reflected BSDE with monotone generator). This improvement of our result would be quite long and is left for further research.

\subsubsection*{Estimates on supersolution}

We first consider supersolution of the BSDE 
\begin{equation}  \label{eq:supersolution_general_BSDE}
y_t = \xi +  \int_t^T f(u,y_u,z_u) du - \int_t^T z_u dW_u - \int_t^T dm_u + \int_t^T dk_u. 
\end{equation}
The generator $f$ satisfies Condition {\bf (H)}. 
Let us begin with some {\it a priori} estimates. Here we use the notation $z\star W$ to denote the stochastic integral of $z$ w.r.t. $W$ and 
$$n = z\star W  + m - k, \quad \ell = m-k.$$
\begin{Lemma}[Equivalent to Lemma 2.1 in \cite{bouc:poss:zhou:15}]
Let us fix some $1< p \leq \frac{\varrho \bsp}{\varrho + \bsp} < \bsp$ (condition \eqref{eq:cond_int}). 

For all $\alpha > 0$, there exists a constant $C$ depending only on $L_1$, $L_2$, $p$ and $T$ such that 
\begin{equation} \label{eq:estim_2_8}
\| k \|^p_{\bI^{p,\alpha}} \leq C \left( \| e^{\frac{\alpha}{2} .} y \|^p_{\bD^p} + \| e^{\frac{\alpha }{2} .} (1+|y|^\sq) \|^p_{\bD^{\bsp}} \|\Psi\|^p_{\bL^{\widehat p}}+ \|z\|^p_{\bH^{p,\alpha}} + \|f^0\|^p_{\bH^{p,\alpha}}\right)
\end{equation}
with 
\begin{equation}\label{eq:def_widehat_p}
\widehat p = \frac{p\bsp}{(\bsp-p)} \leq \varrho.
\end{equation} 
Moreover for any $\eps > 0$, there exists $\alpha > 0$ and $C^{\eps,\alpha}$ such that if $p\geq 2$
\begin{eqnarray} \nonumber
&& \| y \|^p_{\bH^{p,\alpha}}  +  \| n \|^p_{\bM^{p,\alpha}}  \leq  \eps \|f^0 \|^p_{\bH^{p,\alpha}} \\  \nonumber
&&\qquad  + C^{\eps,\alpha}  \Bigg[ \| \xi \|^p_{\bL^p} + \| e^{\alpha .} y \|^p_{\bD^{p}} + \|(e^{\alpha. } y_- \star n)_T \|^{\frac{p}{2}}_{\bL^{\frac{p}{2}}}\1_{p> 2}  \\ \label{eq:estim_2_9}
&& \qquad \qquad \qquad \left. +  \bE \left(  \int_0^T e^{\alpha s} \phi_p(y_{s-}) dk_s\right)^+ \1_{p=2}\right].
\end{eqnarray}
and if $p \in (1,2)$
\begin{eqnarray} \nonumber
\| n \|^p_{\bM^{p,\alpha}} & \leq &  \eps \|f^0 \|^p_{\bH^{p,\alpha}} \\ \label{eq:estim_2_10}
 &  + & C^{\eps,\alpha}  \left[ \| \xi \|^p_{\bL^p} + \| e^{\alpha .} y \|^p_{\bD^{p}} +  \bE \left(  \int_0^T e^{\alpha s} \phi_p(y_{s-}) dk_s\right)^+ \right].
\end{eqnarray}
\end{Lemma}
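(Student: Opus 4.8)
The plan is to reproduce, with the necessary modifications, the scheme of \cite[Lemma 2.1]{bouc:poss:zhou:15}, systematically replacing each use of the Lipschitz character of $f$ in $y$ by the monotonicity {\bf H2} and the polynomial growth {\bf H4}, and keeping track of the jumps of $m$ since the filtration is not assumed quasi--left continuous (as in \cite{krus:popi:14}). Throughout, the basic bound $|f(s,y_s,z_s)| \le |f^0_s| + L_2\|z_s\| + \Psi_s(1+|y_s|^\sq)$, obtained by combining {\bf H3} and {\bf H4}, is the workhorse, and the single non-routine feature is the nonlinear term $\Psi_s(1+|y_s|^\sq)$.

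\emph{Estimate on $k$.} Writing $n = z\star W + m - k$, equation \eqref{eq:supersolution_general_BSDE} gives
\[
n_t = y_t - y_0 + \int_0^t f(u,y_u,z_u)\,du,
\]
so $n$ is a supermartingale whose predictable increasing part is exactly $k$. Rather than isolate $dk$ (which would make the martingale $m$ appear), I would bound $k$ by a domination inequality of Garsia--Neveu / Lenglart type for the increasing part of $n$, which controls $\|k\|_{\bI^{p,\alpha}}$ by the weighted $\bD^p$--norm of $n$ together with $\|\xi\|_{\bL^p}$. Since $\sup_t e^{\frac\alpha2 t}|n_t| \le 2\sup_t e^{\frac\alpha2 t}|y_t| + \int_0^T e^{\frac\alpha2 s}|f(s,y_s,z_s)|\,ds$, this reduces everything to $\|e^{\frac\alpha2\cdot}y\|_{\bD^p}^p$ and $\bE\big[(\int_0^T e^{\frac\alpha2 s}|f(s,y_s,z_s)|\,ds)^p\big]$. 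The $|f^0|$ and $L_2\|z\|$ pieces produce $\|f^0\|_{\bH^{p,\alpha}}^p$ and $\|z\|_{\bH^{p,\alpha}}^p$, while for the remaining piece I would factor $\int_0^T e^{\frac\alpha2 s}\Psi_s(1+|y_s|^\sq)\,ds \le \big(\sup_s e^{\frac\alpha2 s}(1+|y_s|^\sq)\big)\int_0^T \Psi_s\,ds$ and apply H\"older with conjugate exponents $\bsp/p$ and $\bsp/(\bsp-p)$. Because $\widehat p = p\bsp/(\bsp-p)\le \varrho$ by \eqref{eq:cond_int}, this yields exactly the term $\|e^{\frac\alpha2\cdot}(1+|y|^\sq)\|_{\bD^\bsp}^p\,\|\Psi\|_{\bL^{\widehat p}}^p$ of \eqref{eq:estim_2_8}, the finiteness of $\|\Psi\|_{\bL^{\widehat p}}$ being ensured by {\bf C1}; crucially, no $\|m\|$--term enters.

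\emph{Estimates on $y$ and $n$.} For \eqref{eq:estim_2_9} and \eqref{eq:estim_2_10} I would apply the It\^o formula to $e^{\alpha t}|y_t|^p$ with the function $\phi_p$ and the constant $c(p)$ of \eqref{def:c(p)}, exactly as in \cite[Corollary 2.3]{bria:dely:hu:03} and \cite[Proposition 3]{krus:popi:14}, retaining the jump sum of $m$. The drift produces $-p\int e^{\alpha s}\phi_p(y_{s-})f(s,y_s,z_s)\,ds$, which I would control through the linearization \eqref{eq:linear_procedure}: monotonicity {\bf H2} bounds the $y$--increment of $f$ by $pL_1|y_s|^p$, the Lipschitz property {\bf H3} lets Young's inequality absorb the $z$--increment into the quadratic term $c(p)|y_{s-}|^{p-2}\|z_s\|^2$, and the $f^0$ part is split off with a further $\eps$--weighted Young inequality. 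Taking $\alpha$ large enough absorbs the $\int e^{\alpha s}|y_s|^p\,ds$ contributions, leaving $\|e^{\alpha\cdot}y\|_{\bD^p}$. The finite--variation part contributes $-p\int e^{\alpha s}\phi_p(y_{s-})\,dk_s$, whose positive part gives the term $\bE\big(\int_0^T e^{\alpha s}\phi_p(y_{s-})\,dk_s\big)^+$.

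\emph{Closing the martingale norm.} To pass from the energy identity to $\|n\|_{\bM^{p,\alpha}}$ I would use Burkholder--Davis--Gundy on $\int \phi_p(y_{s-})\,dn_s$ together with Young's inequality with a small $\eps$ to absorb part of $\|n\|_{\bM^{p,\alpha}}$ on the left; for $p\ge 2$ this BDG step generates the cross term $\|(e^{\alpha\cdot}y_-\star n)_T\|_{\bL^{p/2}}^{p/2}$ of \eqref{eq:estim_2_9}, while for $p\in(1,2)$ a direct BDG bound produces \eqref{eq:estim_2_10}. The main obstacle, and the only genuine departure from \cite{bouc:poss:zhou:15}, is the growth term $\Psi_s(1+|y_s|^\sq)$: all the integrability bookkeeping must be arranged so that the H\"older exponent $\widehat p=p\bsp/(\bsp-p)$ stays below $\varrho$ — which is precisely why \eqref{eq:cond_int} is imposed — and so that the stronger norm $\|e^{\frac\alpha2\cdot}(1+|y|^\sq)\|_{\bD^\bsp}$, rather than a plain $\bD^p$ norm of $y$, is the object carried through the whole computation.
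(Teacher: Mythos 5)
Your plan is correct and follows essentially the same route as the paper: the bound on $k$ comes from applying the Garsia--Neveu type domination lemma (Lemma A.1 of \cite{bouc:poss:zhou:15}) to the supermartingale containing $k$ as its Doob--Meyer compensator, with the growth term $\Psi_s(1+|y_s|^\sq)$ handled by H\"older with exponent $\widehat p = p\bsp/(\bsp-p)\le\varrho$, and the estimates \eqref{eq:estim_2_9}--\eqref{eq:estim_2_10} come from It\^o's formula for $e^{\alpha t}|y_t|^p$ using only the monotonicity {\bf H2} plus Young/BDG, with the $dk$-contribution retained through its positive part. The only cosmetic deviations are that the paper builds the exponential weight directly into the supermartingale $\Upsilon^{\alpha/2}$ (so the weighted norm $\|k\|_{\bI^{p,\alpha}}$ appears without an extra $e^{\alpha T/2}$ factor), and that for $p\in(1,2)$ the passage from the quantity $\int |y_{s-}|^{p-2}d[n]_s$ to $\|n\|^p_{\bM^{p,\alpha}}$ is a Young-type interpolation as in \cite[Proposition 3]{krus:popi:14} rather than a direct BDG bound.
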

\begin{proof}
For any $\alpha \geq 0$, the process
$$\Upsilon^\alpha_t = e^{\alpha t} y_t - \int_0^t e^{\alpha u} \left( f(u,y_u,z_u) du + \alpha y_u \right) du$$
is a supermartingale. And the non-decreasing process in its Doob-Meyer decomposition is $\int_0^t e^{\alpha u} dk_u$. Indeed the It\^o formula gives:
\begin{eqnarray*}
e^{\alpha t} y_t & = & y_0 + \int_0^t  \alpha e^{\alpha u}y_u du - \int_0^t e^{\alpha u}f(u,y_u,z_u) du \\
& + & \int_0^t e^{\alpha u} z_u dW_u + \int_0^t e^{\alpha u} dm_u - \int_0^t e^{\alpha u} dk_u.
\end{eqnarray*}
Therefore from \cite[Lemma A.1]{bouc:poss:zhou:15}, there exists a constant $C_p$ depending only on $p$ (and changing from line to line) such that 
\begin{eqnarray*}
&& \bE \left[\left( \int_0^T e^{\alpha u/2} dk_u \right)^p\right]  \leq C_p \|\Upsilon^{\alpha/2}\|^p_{\bD^p} \\
&& \leq  C_p \left( \|e^{\frac{\alpha}{2} .} y \|^p_{\bD^p} +\bE \left[\left( \int_0^T e^{\alpha u/2} \left| f(u,y_u,z_u) du + \frac{\alpha}{2} y_u \right| du \right)^p\right]  \right) \\
&& \leq  C_p \left( T^p  \alpha^p  \|e^{\frac{\alpha}{2} .} y \|^p_{\bD^p} + L_2^p \| z \|_{\bH^{p,\alpha}}^p + \|f^0\|_{\bH^{p,\alpha}}^p \right) \\
&& \qquad + C_p T^{p-1} \bE \left[ \int_0^T e^{\alpha u p/2} \Psi(u)^p (1+|y_u|^\sq)^p du \right]   \\
&&  \leq  C_p \left( T^p  \alpha^p  \|e^{\frac{\alpha}{2} .} y \|^p_{\bD^p} + L_2^p \| z \|_{\bH^{p,\alpha}}^p + \|f^0\|_{\bH^{p,\alpha}}^p \right) \\
&& \qquad + C_p T^{p-1}\left(  \bE \left[ \int_0^T e^{\frac{\alpha u \bsp}{2}}  (1+|y_u|^\sq)^{\bsp} du \right] \right)^{\frac{p}{\bsp}}  \left(  \bE \left[ \int_0^T   \Psi(u)^{\frac{p\bsp}{(\bsp-p)}} du \right] \right)^{\frac{\bsp-p}{\bsp}}.
\end{eqnarray*}
The proof of Estimate \eqref{eq:estim_2_9} is exactly the same as for Estimate (2.9) in \cite{bouc:poss:zhou:15}. Indeed only the monotonicity assumption {\bf H2} and not the growth condition {\bf H4} is used. Let us only detail the proof of Estimate \eqref{eq:estim_2_10}, which uses only the monotonicity condition {\bf H2} w.r.t. $y$ (not the Lipschitz condition). Indeed using \cite[Lemmas 7 and 8]{krus:popi:14} (see also \cite[Lemma A.2]{bouc:poss:zhou:15}), for $0\leq t\leq T$:
\begin{eqnarray*}
&& e^{\alpha t} |y_t|^p +c(p) \int_t^T  |y_{s}|^{p-2}  \1_{y_s\neq 0} d[ n ]^c_s \\
&& +  c(p) \sum_{t< s \leq T} e^{\alpha s} |\Delta n_s|^2 \left[  |y_{s-}|^2 \vee |y_{s-}+ \Delta n_s|^{2} \right]^{\frac{p}{2}-1} \1_{|y_{s-}| \vee |y_{s-}+ \Delta n_s| \neq  0} \\
&& \quad  \leq e^{\alpha T}|\xi|^p + p \int_t^T e^{\alpha s} \phi_p(y_s) f(s,y_s,z_s) ds  - \alpha \int_t^T e^{\alpha s} |y_s|^p ds \\
&& \qquad -  p \int_t^T e^{\alpha s} \phi_p(y_{s-}) dn_s . 
\end{eqnarray*}
Then the assumptions on $f$ and Young's inequality imply that 
\begin{eqnarray*}
&& e^{\alpha t} |y_t|^p +c(p)(1 - \mathfrak{k}) \int_t^T  |y_{s}|^{p-2}  \1_{y_s\neq 0} d[ n ]^c_s \\
&& +  c(p) \sum_{t< s \leq T} e^{\alpha s} |\Delta n_s|^2 \left[  |y_{s-}|^2 \vee |y_{s-}+ \Delta n_s|^{2} \right]^{\frac{p}{2}-1} \1_{|y_{s-}| \vee |y_{s-}+ \Delta n_s| \neq  0} \\
&& \quad  \leq e^{\alpha T}|\xi|^p + p \int_t^T e^{\alpha s} |y_s|^{p-1} | f^0_s| ds + \left(pL_1+ \frac{p L_2^2}{2\mathfrak{k} (p-1)}  - \alpha\right) \int_t^T e^{\alpha s} |y_s|^p ds \\
&& \qquad -  p \int_t^T e^{\alpha s} \phi_p(y_{s-}) dn_s  \\
&& \quad  \leq e^{\alpha T}|\xi|^p + \int_t^T e^{\alpha s} | f^0_s|^p ds-  p \int_t^T e^{\alpha s} \phi_p(y_{s-}) dn_s  \\
&& \qquad + (p-1) \sup_{s\in [0,T]}  e^{\alpha s} |y_s|^p  + \left( pL_1 + \frac{p L_2^2}{2\mathfrak{k}(p-1)}  - \alpha\right) \int_t^T e^{\alpha s} |y_s|^p ds
\end{eqnarray*}
for any $\mathfrak{k} > 0$. We choose $\mathfrak{k}$ such that $\mathfrak{k} < 1$, and $\alpha \geq pL_1+ \frac{p L_2^2}{2\mathfrak{k}(p-1)}$. Then taking the expectation we obtain an explicit constant $C$ depending only on $L_1$, $L_2$, $p$, $T$ and $\alpha$ such that 
\begin{eqnarray*}
A & = & \bE \int_0^T  |y_{s}|^{p-2}  \1_{y_s\neq 0} d[ n ]^c_s \\
 &+ &  \bE \sum_{0< t \leq T} e^{\alpha s} |\Delta n_s|^2 \left[  |y_{s-}|^2 \vee |y_{s-}+ \Delta n_s|^{2} \right]^{\frac{p}{2}-1} \1_{|y_{s-}| \vee |y_{s-}+ \Delta n_s| \neq  0} \\
& \leq & C \left( \bE |\xi|^p + \bE \int_0^T e^{\alpha s} | f^0_s|^p ds +  \bE \left[\left(  \int_0^T e^{\alpha s} \phi_p(y_{s-}) dk_s\right)^+ \right] \right).
\end{eqnarray*}
The arguments of \cite{krus:popi:14}, proof of Proposition 3, Step 2, give:
$$\| n\|^p_{\bM^{p,\alpha}} \leq (2-p) \left( \frac{2C}{\eps p}\right)^{\frac{1}{p-1}}\| e^{\alpha .} y\|^p_{\bD^{p}}  + \frac{\eps}{C} A.$$
This achieves the proof of the Lemma. 
\end{proof}

From this lemma, we can copy the arguments in the proof of \cite[Theorem 2.1]{bouc:poss:zhou:15} and we deduce that 
\begin{Prop}[\cite{bouc:poss:zhou:15}, Theorem 2.1] \label{prop:a_priori_estim_N}
If $(y,z,m,k)$ is a supersolution of \eqref{eq:supersolution_general_BSDE} in the space $\bD^{\bsp \sq}\times \bH^{p }\times \bM^{p } \times \bI^{p }_+$ with $\bsp > \varrho/(\varrho-1)$ then for any $1 < p \leq \frac{\varrho \bsp}{\varrho + \bsp}$ and for $\alpha$ large enough, there exists a constant $C$ such that 
\begin{equation*} 
 \|z\|^p_{\bH^{p,\alpha}}+ \|m\|^p_{\bM^{p,\alpha}} + \| k \|^p_{\bI^{p,\alpha}} \leq C \left( \|\xi\|^p_{\bL^p} + \| y \|^p_{\bD^p} + \| (1+|y|^\sq) \|^p_{\bD^{\bsp}}  \|\Psi\|^p_{\bL^{\varrho}}+ \|f^0\|^p_{\bH^{p,\alpha}}\right). 
\end{equation*}
\end{Prop}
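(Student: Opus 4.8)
The plan is to deduce all three bounds from a single scalar quantity, namely $\|n\|_{\bM^{p,\alpha}}$ where $n = z\star W + m - k$ is the process introduced just before the Lemma, and then to close a self-improving inequality for it. The starting point is an observation about quadratic variations only. Since $z\star W$ is the $W$-driven continuous martingale part of $n$ while $m$ is orthogonal to $W$, the continuous part of the bracket splits as $[n]^c = \int_0^\cdot |z_s|^2\,ds + [m]^c$, so that $d[n]_s \geq |z_s|^2\,ds$ as measures and therefore $\|z\|^p_{\bH^{p,\alpha}} \leq \|n\|^p_{\bM^{p,\alpha}}$. Writing conversely $m = n - z\star W + k$ and using the subadditivity of the seminorm $X \mapsto \big(\int_0^T e^{\alpha s}\,d[X]_s\big)^{1/2}$ together with $\|z\star W\|_{\bM^{p,\alpha}} = \|z\|_{\bH^{p,\alpha}}$ and $\|k\|_{\bM^{p,\alpha}} \leq \|k\|_{\bI^{p,\alpha}}$ (because $\int_0^T e^{\alpha s}\,d[k]_s = \sum e^{\alpha s}(\Delta k_s)^2 \leq (\int_0^T e^{\alpha s/2}\,dk_s)^2$), one gets $\|m\|^p_{\bM^{p,\alpha}} \leq C\big(\|n\|^p_{\bM^{p,\alpha}} + \|k\|^p_{\bI^{p,\alpha}}\big)$. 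Thus it suffices to control $\|n\|_{\bM^{p,\alpha}}$ and $\|k\|_{\bI^{p,\alpha}}$ by the data bundle $\Theta := \|\xi\|^p_{\bL^p} + \|y\|^p_{\bD^p} + \|(1+|y|^\sq)\|^p_{\bD^{\bsp}}\|\Psi\|^p_{\bL^{\varrho}} + \|f^0\|^p_{\bH^{p,\alpha}}$.

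For $\|k\|_{\bI^{p,\alpha}}$ I would invoke \eqref{eq:estim_2_8}, replacing the weighted $\bD$-norms by $\bD^p$-norms (at the cost of a factor $e^{\alpha pT/2}$ absorbed into $C$) and bounding $\|\Psi\|_{\bL^{\widehat p}} \leq C\|\Psi\|_{\bL^{\varrho}}$ by Hölder's inequality on $[0,T]$, which is legitimate since $\widehat p \leq \varrho$ by \eqref{eq:def_widehat_p}. Combined with $\|z\|^p_{\bH^{p,\alpha}} \leq \|n\|^p_{\bM^{p,\alpha}}$ this reads $\|k\|^p_{\bI^{p,\alpha}} \leq C\big(\Theta + \|n\|^p_{\bM^{p,\alpha}}\big)$. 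For $\|n\|_{\bM^{p,\alpha}}$ I would use \eqref{eq:estim_2_9} when $p \geq 2$ and \eqref{eq:estim_2_10} when $p \in (1,2)$; in both cases $\|n\|^p_{\bM^{p,\alpha}} \leq \eps\|f^0\|^p_{\bH^{p,\alpha}} + C^{\eps,\alpha}\big(\|\xi\|^p_{\bL^p} + \|e^{\alpha\cdot}y\|^p_{\bD^p} + R\big)$, the cross term $R$ being $\bE\big(\int_0^T e^{\alpha s}\phi_p(y_{s-})\,dk_s\big)^+$ for $p \leq 2$ and $\|(e^{\alpha\cdot}y_-\star n)_T\|^{p/2}_{\bL^{p/2}}$ for $p > 2$.

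The crux is the treatment of $R$, which is where the circular coupling between $\|n\|$ and $\|k\|$ must be broken. For $p > 2$ I would apply Burkholder--Davis--Gundy to the martingale part of $y_-\star n$ and then Young's inequality in the form $a^{p/2}b^{p/4} \leq C_\delta\,a^p + \delta\,b^{p/2}$, which produces $R \leq C_\delta\|y\|^p_{\bD^p} + \delta\big(\|n\|^p_{\bM^{p,\alpha}} + \|k\|^p_{\bI^{p,\alpha}}\big)$ with $\delta$ at our disposal. For $p \leq 2$ the identity $|\phi_p(y_{s-})| = |y_{s-}|^{p-1}$ and Hölder's inequality give instead $R \leq C\|e^{\alpha\cdot/2}y\|^{p-1}_{\bD^p}\,\|k\|_{\bI^{p,\alpha}}$, i.e.\ a strictly sublinear power of $\|k\|_{\bI^{p,\alpha}}$. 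In either case I substitute the bound $\|k\|^p_{\bI^{p,\alpha}} \leq C\big(\Theta + \|n\|^p_{\bM^{p,\alpha}}\big)$ from the previous paragraph; the contribution of $\|n\|_{\bM^{p,\alpha}}$ then appears either with the small prefactor $\delta$ (case $p > 2$) or as the sublinear power $\|n\|^{1/p}_{\bM^{p,\alpha}}$ (case $p \leq 2$), and one last application of Young's inequality (with exponent $p$, so that $N^{1/p}$ is absorbable irrespective of the size of the accompanying constant) moves it to the left-hand side.

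This yields $\|n\|^p_{\bM^{p,\alpha}} \leq C\,\Theta$, and hence, through $\|z\|_{\bH^{p,\alpha}} \leq \|n\|_{\bM^{p,\alpha}}$, the $\|k\|$-estimate, and the $\|m\|$-estimate of the first paragraph, each of $\|z\|^p_{\bH^{p,\alpha}}$, $\|m\|^p_{\bM^{p,\alpha}}$ and $\|k\|^p_{\bI^{p,\alpha}}$ is bounded by $C\,\Theta$, which is exactly the asserted inequality. I expect the genuine difficulty to lie entirely in this last closing step: the observation $\|z\|_{\bH^{p,\alpha}} \leq \|n\|_{\bM^{p,\alpha}}$ and the sublinear-power absorption are the two ingredients that make the argument self-contained, and they are precisely what substitutes for the Lipschitz-based reasoning of \cite{bouc:poss:zhou:15}, unavailable here under the sole monotonicity assumption {\bf H2}.
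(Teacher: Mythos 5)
Your argument is correct and is essentially the one the paper intends: the paper itself gives no details here beyond ``copy the arguments of \cite{bouc:poss:zhou:15}, Theorem 2.1'', and what it is copying is exactly your bootstrap --- combine \eqref{eq:estim_2_8}, \eqref{eq:estim_2_9}, \eqref{eq:estim_2_10} with the pointwise bracket inequality $d[n]_s \geq \|z_s\|^2\,ds$, handle the cross terms by BDG/H\"older, and absorb the resulting small or strictly sublinear occurrences of $\|n\|^p_{\bM^{p,\alpha}}$ by Young's inequality (legitimate since the hypothesis $(z,m,k)\in\bH^p\times\bM^p\times\bI^p_+$ makes that quantity finite a priori). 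One cosmetic slip: in the case $p\le 2$ the term you must absorb is $\|y\|^{p-1}_{\bD^p}\,\big(\Theta+\|n\|^p_{\bM^{p,\alpha}}\big)^{1/p}$, i.e.\ $\|n\|_{\bM^{p,\alpha}}$ to the first power rather than ``$\|n\|^{1/p}_{\bM^{p,\alpha}}$'', but this is still strictly sublinear in $\|n\|^p_{\bM^{p,\alpha}}$ and the Young absorption goes through unchanged.
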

\begin{Remark}
As in Lemma \ref{lem:L2_approx} and Remark \ref{rem:on_lipschitz_approx}, if $\Psi$ is bounded (Condition {\bf H4'}), the result holds for $\bsp = p > 1$. Note that $C$ may depend on $\alpha$.

\end{Remark}
The results of \cite[Theorem 2.2]{bouc:poss:zhou:15} hold. More precisely if we have two solutions $(y^i,z^i,m^i,k^i) \in \bD^{\bsp \sq}\times \bH^{\bsp}\times \bM^{\bsp} \times \bI^{\bsp}_+$ of \eqref{eq:supersolution_general_BSDE} with terminal condition $\xi^i$ and generator $f^i$, we define
$$\delta y = y^1 - y^2, \quad \delta z = z^1 - z^2, \quad \delta m = m^1 - m^2, \quad \delta k = k^1 - k^2,$$ 
$$ \delta f(t,\omega,y,z) = f^1(t,\omega,y,z) - f^2(t,\omega,y,z).$$
$f^2$ satisfies Conditions {\bf (H)}.  
Then for any $\alpha \geq 0$ and $p$ satisfying \eqref{eq:cond_int}, namely $1 < p \leq \frac{\varrho \bsp}{\varrho + \bsp}$, there exists a constant $C$ such that 
\begin{equation*} 
 \|\delta z\|^p_{\bH^{p,\alpha}}+ \|\delta (m-k)\|^p_{\bM^{p,\alpha}} \leq C \left( \|\delta \xi\|^p_{\bL^p} + \| \delta y \|^p_{\bD^p} + \| \delta y \|^{\frac{p}{2}\wedge(p-1)}_{\bD^{p}} + \|\delta f(y^1,z^1)\|^p_{\bH^{p,\alpha}}\right).
\end{equation*}
Here the constant $C$ depends on $L_1$, $L_2$, $p$, $\alpha$ and also on $\|\Psi\|_{\bL^{\varrho}}$, $\| y^i \|_{\bD^{p}}$, $\| y^i \|_{\bD^{\bsp \sq}}$, $\| \xi^i \|_{\bL^{p}}$, $\| f^i(0,0) \|_{\bH^{p,\alpha}}$ for $i=1,2$. To prove this inequality we argue as in the proof of \cite[Theorem 2.2]{bouc:poss:zhou:15}. 

\subsubsection*{Application to reflected monotone BSDE}

Now we study the reflected BSDE:
\begin{equation}  \label{eq:reflected_general_BSDE}
\widetilde y_t = \xi +  \int_t^T f(u,\widetilde y_u,\widetilde z_u) du - \int_t^T \widetilde z_u dW_u - \int_t^T d\widetilde m_u + \int_t^T d\widetilde k_u
\end{equation}
with $\widetilde y_t \geq S_t$ and $\int_0^T (\widetilde y_{t-} - S_{t-}) d\widetilde k_t = 0$, $\bP$-a.s. ({\it Skorokhod condition}).  $S$ is a c\`adl\`ag process such that $S^+ = S \vee 0$ belongs to $\bD^{p}$. 

Using again the linearization procedure \eqref{eq:linear_procedure} and the new probability measure $\bQ$ defined by \eqref{eq:density_chgt_measure}, if $(\widetilde y, \widetilde z,\widetilde m,\widetilde k)$ is a solution, then
$$\Lambda_t \widetilde y_t = \Lambda_T \xi +  \int_t^T \Lambda_s f^0_s ds + \int_t^T \zeta_s \Lambda_s \widetilde z_s ds - \int_t^T \Lambda_s \widetilde z_s dW_s - \int_t^T \Lambda_s d\widetilde m_s + \int_t^T \Lambda_s d\widetilde k_s$$
with 
$\Lambda_t \widetilde y_t \geq \Lambda_t S_t$ and $\int_0^T (\Lambda_{u-} \widetilde y_{u-} - \Lambda_{u-} S_{u-}) d\widetilde k_u = 0$ a.s. Again the key point is that for $0\leq s \leq t \leq T$, $0<\Lambda_s/\Lambda_t \leq \exp(L_1(t-s))$. As in \cite[Proposition 3.1]{bouc:poss:zhou:15} the following representation holds: 
\begin{equation} \label{eq:linear_snell_envelop}
\Lambda_t \widetilde y_t = \underset{\tau \in \cT_{t,T}}{\esssup} \ \bE^{\bQ} \left[\int_t^\tau \Lambda_s f^0_s ds +\Lambda_T \xi \1_{\tau=T} + \Lambda_\tau S_\tau \1_{\tau < T}\bigg| \cF_t \right].
\end{equation}
Now we denote by $(y,z,m)$ the unique solution of the BSDE \eqref{eq:general_BSDE} (or \eqref{eq:supersolution_general_BSDE} with $k=0$).
\begin{equation*}  
y_t = \xi +  \int_t^T f(u,y_u,z_u) du  -\int_t^T z_u dW_u - \int_t^T dm_u.
\end{equation*}
From the comparison principle, a.s. for any $0\leq t\leq T$, $\widetilde y_t \geq y_t$. Let us begin with two technical lemmas corresponding to \cite[Propositions 3.2 and 3.3]{bouc:poss:zhou:15}.
\begin{Lemma} \label{lem:a_priori_estimate}
For $p> 1$, if $(\widetilde y,\widetilde z,\widetilde m,\widetilde k)$ is a solution of \eqref{eq:reflected_general_BSDE}, then
\begin{equation}  \label{eq:a_priori_estim_Y_RBSDE}
\| e^{\alpha .} \widetilde y \|^p_{\bD^{p}} \leq C_{\alpha,L_1,L_2,T} \left[ \| \xi \|^p_{\bL^p} + \|S^+\|^p_{\bD^{p}} +  \bE \left(  \int_0^T  |f^0_s| ds \right)^p \right] + \widehat C_{\alpha,L_1,T}\| e^{\alpha .} y \|^p_{\bD^{p}}.
\end{equation}
Moreover if we have two solutions $(\widetilde y^i,\widetilde z^i,\widetilde m^i,\widetilde k^i)$ of the reflected BSDE \eqref{eq:reflected_general_BSDE} with terminal condition $\xi^i$, generator $f^i$ and barrier $S^i$, then
\begin{equation} \label{eq:stability_RBSDE_Y}
\| e^{\alpha .} \delta \widetilde y \|^p_{\bD^{p}} \leq \bar C_{\alpha,L_1,L_2,T} \left[ \|\delta  \xi \|^p_{\bL^p} + \|\delta S\|^p_{\bD^{p}} +  \bE \left(  \int_0^T  |\delta f(s,\widetilde y^1_s,\widetilde z^1_s)| ds \right)^p \right].
\end{equation}
\end{Lemma}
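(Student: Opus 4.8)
The plan is to derive both bounds from the linear Snell-envelope representation \eqref{eq:linear_snell_envelop}, which is the only place where the reflection genuinely enters; the monotonicity {\bf H2} is used exclusively through the one-sided control $\lambda_s\le L_1$ of the linearizing coefficient. For the a priori estimate \eqref{eq:a_priori_estim_Y_RBSDE} I would first invoke the comparison principle (Lemma \ref{lem:comp_sol_gene_BSDE}) against the non-reflected solution $(y,z,m)$ of \eqref{eq:general_BSDE} to obtain the lower bound $\widetilde y_t\ge y_t$, hence $\widetilde y_t\ge-|y_t|$. For the upper bound I bound the optional reward in \eqref{eq:linear_snell_envelop} uniformly in $\tau\in\cT_{t,T}$, replacing $S_\tau$ by $S^+_\tau$, $\xi$ by $|\xi|$ and $f^0_s$ by $|f^0_s|$; this eliminates the essential supremum and leaves a single $\bQ$-conditional expectation. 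Dividing by $\Lambda_t>0$ and using $0<\Lambda_s/\Lambda_t\le e^{L_1(s-t)}\le e^{|L_1|T}$ yields the pointwise bound
\begin{equation*}
|\widetilde y_t|\le|y_t|+e^{|L_1|T}\,\bE^{\bQ}\Big[\,|\xi|+\int_t^T|f^0_s|\,ds+\sup_{t\le s\le T}S^+_s\ \Big|\ \cF_t\Big].
\end{equation*}

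Next I would pass from $\bQ$ to $\bP$. After multiplying by $e^{\alpha t}$ and taking the supremum over $t$, the right-hand side (apart from the $\cF_t$-adapted integral term, handled directly) is dominated by a $\bQ$-martingale, so Doob's maximal inequality under $\bQ$ controls its $\bD^p(\bQ)$ norm by the $\bL^p(\bQ)$ norm of the terminal reward. Because the density \eqref{eq:density_chgt_measure} is driven by the coefficient $\zeta$ bounded by $L_2$ (Condition {\bf H3}), both the density and its reciprocal have moments of every order; a H\"older splitting therefore converts all $\bQ$-moments into $\bP$-moments at the cost of a constant depending only on $L_1,L_2,p,T,\alpha$. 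Collecting the terms, together with the contribution $\widehat C_{\alpha,L_1,T}\|e^{\alpha\cdot}y\|^p_{\bD^p}$ coming from the lower bound, gives \eqref{eq:a_priori_estim_Y_RBSDE}.

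For the stability estimate \eqref{eq:stability_RBSDE_Y} I would linearize the difference of the two drivers around the two solutions: writing $f^1(s,\widetilde y^1_s,\widetilde z^1_s)-f^2(s,\widetilde y^2_s,\widetilde z^2_s)=\delta f(s,\widetilde y^1_s,\widetilde z^1_s)+\lambda_s\,\delta\widetilde y_s+\zeta_s\,\delta\widetilde z_s$ with $\lambda_s\le L_1$ and $|\zeta_s|\le L_2$ by {\bf H2} and {\bf H3}, the process $\delta\widetilde y=\widetilde y^1-\widetilde y^2$ solves a linear equation whose reflection term is the signed measure $d\widetilde k^1-d\widetilde k^2$. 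With $\Lambda$ and $\bQ$ built from this $(\lambda,\zeta)$ as in \eqref{eq:linear_procedure}--\eqref{eq:density_chgt_measure}, one gets
\begin{equation*}
\Lambda_t\,\delta\widetilde y_t=\bE^{\bQ}\Big[\Lambda_T\,\delta\xi+\int_t^T\Lambda_s\,\delta f(s,\widetilde y^1_s,\widetilde z^1_s)\,ds+\int_t^T\Lambda_s\,d(\widetilde k^1-\widetilde k^2)_s\ \Big|\ \cF_t\Big].
\end{equation*}
The signed reflection term is controlled by the two Skorokhod conditions: on $\{d\widetilde k^1>0\}$ one has $\widetilde y^1=S^1$ while $\widetilde y^2\ge S^2$, so $\delta\widetilde y\le\delta S$ there, and symmetrically $\delta\widetilde y\ge\delta S$ on $\{d\widetilde k^2>0\}$; this bounds its contribution by $\|\delta S\|_{\bD^p}$. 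The same Doob and change-of-measure arguments as above then produce \eqref{eq:stability_RBSDE_Y}.

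The main obstacle is the transfer from the $\bQ$-representation to $\bP$-norms while keeping the constants dependent only on $L_1,L_2,p,T,\alpha$. This rests on two uniform bounds that replace Lipschitz continuity in $y$: the coefficient $\zeta$ must be bounded (ensured by {\bf H3}), so that the Radon--Nikodym density and its reciprocal lie in every $L^q(\bP)$, and $\lambda$ must be bounded from above (ensured by the monotonicity {\bf H2}), so that $\Lambda_s/\Lambda_t$ stays controlled for $s\ge t$. Since only the $Y$-component is estimated, the more delicate $\bH^p$, $\bM^p$ and $\bI^p$ bounds for $(\widetilde z,\widetilde m,\widetilde k)$ are avoided and the whole argument can be carried out at the level of the Snell envelope; the one genuinely reflected-BSDE-specific point is the control of the reflection contribution by $\|\delta S\|_{\bD^p}$ through the Skorokhod minimality conditions.
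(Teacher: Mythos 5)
Your treatment of the first estimate \eqref{eq:a_priori_estim_Y_RBSDE} is correct and is essentially the paper's proof: comparison with the unreflected solution for the lower bound, the linearized Snell representation \eqref{eq:linear_snell_envelop} with the reward bounded uniformly in $\tau$ for the upper bound, then the boundedness of $\zeta$ (hence all moments of the density and its inverse), H\"older and Doob to return to $\bP$-norms.

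The stability part, however, has a genuine gap. From your linearized representation
$$\Lambda_t\,\delta\widetilde y_t=\bE^{\bQ}\Big[\Lambda_T\,\delta\xi+\int_t^T\Lambda_s\,\delta f(s,\widetilde y^1_s,\widetilde z^1_s)\,ds+\int_t^T\Lambda_s\,d\widetilde k^1_s-\int_t^T\Lambda_s\,d\widetilde k^2_s\ \Big|\ \cF_t\Big],$$
you cannot bound the reflection contribution by $\|\delta S\|_{\bD^p}$. The Skorokhod conditions give $\delta\widetilde y_{s-}\le\delta S_{s-}$ $d\widetilde k^1$-a.e.\ and $\delta\widetilde y_{s-}\ge\delta S_{s-}$ $d\widetilde k^2$-a.e., which is useful only when the measures $d\widetilde k^i$ are paired against a non-decreasing function of $\delta\widetilde y_{s-}$ (this is exactly how they are exploited in Lemma \ref{lem:a_priori_estimate_2}, via $\int\phi_p(\delta\widetilde y_{s-})\,d(\delta\widetilde k)_s\le\int\phi_p(\delta S_{s-})\,d(\delta\widetilde k)_s$, and even there the resulting bound involves the $\bI^p$-norms of the $\widetilde k^i$, not $\|\delta S\|$ alone). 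In your formula the integrand is $\Lambda_s>0$, not a function of $\delta\widetilde y$, so the two integrals $\int\Lambda_s\,d\widetilde k^1_s$ and $\int\Lambda_s\,d\widetilde k^2_s$ must be kept together: each one separately can be large even when $\xi^1=\xi^2$, $f^1=f^2$ and $S^1=S^2$, in which case your claimed bound would force both to vanish. The cancellation between them is precisely what a term-by-term estimate loses. The way the paper (following the Lipschitz case it cites) obtains \eqref{eq:stability_RBSDE_Y} is to stay at the level of the optimal-stopping representation for \emph{both} solutions, as in your first part, and use $\big|\esssup_\tau A_\tau-\esssup_\tau B_\tau\big|\le\esssup_\tau|A_\tau-B_\tau|$: the reflection processes never appear, the obstacles enter only through $\delta S_\tau$ in the reward, and the generator difference is reduced to $\delta f(s,\widetilde y^1_s,\widetilde z^1_s)$ by linearizing $f^2$ between the two solutions (only the upper bound $\lambda\le L_1\le 0$ and $|\zeta|\le L_2$ being needed). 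You should redo the second half along those lines.
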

\begin{proof}
Fix $p> 1$. Using the representation \eqref{eq:linear_snell_envelop}, for any $\alpha >0$, we obtain
\begin{eqnarray*}
\sup_{t\in [0,T]} \left( e^{\alpha t} |\widetilde y_t| \right) & \leq & e^{L_1 T} \sup_{t\in [0,T]} \left[ e^{\alpha t} \bE^\bQ\left( \int_0^T  |f^0_s| ds + \sup_{u\in [t,T]} S^+_u + |\xi| \bigg| \cF_t \right) \right]  + \sup_{t\in [0,T]} \left( e^{\alpha t} |y_t| \right) \\
& = &e^{L_1 T} \sup_{t\in [0,T]} \left[ e^{\alpha t} \bE \left( \cE \left( \int_t^T \zeta_s dW_s \right) \left(  \int_0^T  |f^0_s| ds + \sup_{u\in [t,T]} S^+_u + |\xi| \right) \bigg| \cF_t \right) \right]  \\
& & + \sup_{t\in [0,T]} \left( e^{\alpha t} |y_t| \right)\\
& \leq & C_{p,L_2,T}  e^{(\alpha +L_1)T }\sup_{t\in [0,T]} \left[  \bE \left( \left(  \int_0^T  |f^0_s| ds \right)^p + \sup_{u\in [t,T]} (S^+_u)^p + |\xi|^p \bigg| \cF_t \right) \right]^{1/p}  \\
& & + \sup_{t\in [0,T]} \left( e^{\alpha t} |y_t| \right)
\end{eqnarray*}
Using Doob's inequality we deduce \eqref{eq:a_priori_estim_Y_RBSDE}. 

The second point in \cite[Proposition 3.2]{bouc:poss:zhou:15} is the stability of solutions for reflected BSDE. We assume that we have two solutions $(\widetilde y^i,\widetilde z^i,\widetilde m^i,\widetilde k^i)$ of the reflected BSDE \eqref{eq:reflected_general_BSDE} with terminal condition $\xi^i$, generator $f^i$ and barrier $S^i$. The functions $f^i$ satisfy Assumptions {\bf (H)} and again we can assume that the monotonicity constant $L_1$ is non-positive. Then \eqref{eq:stability_RBSDE_Y} can be obtained with the same proof. In fact we only need that $f^2$ satisfies {\bf H3} with $L_1 \leq 0$. No particular condition on $L_1$ of the generator $f^1$ is used here. Hence \cite[Proposition 3.2]{bouc:poss:zhou:15} again holds under our setting. 
\end{proof}

\begin{Lemma} \label{lem:a_priori_estimate_2}
If we have two solutions $(\widetilde y^i,\widetilde z^i,\widetilde m^i,\widetilde k^i)$ of the reflected BSDE \eqref{eq:reflected_general_BSDE} with terminal condition $\xi^i$, generator $f^i$ and barrier $S^i$, then
\begin{eqnarray} \label{eq:stability_RBSDE_N}
\|  \delta \widetilde z \|^p_{\bH^{p,\alpha}} + \|  \delta (\widetilde m-\widetilde k) \|^p_{\bM^{p,\alpha}} \leq \eps \|  \delta f(\widetilde y^1,\widetilde z^1) \|^p_{\bH^{p,\alpha}} +C^\alpha \left(\|  \delta \xi \|^p_{\bL^{p}}  + \| \delta S \|^p_{\bD^{p}}\right)
\end{eqnarray}
where the constant $C^\alpha$ depends on $L_2$, $p$, $\alpha$, $\eps$ and $\|\Psi\|^p_{\bL^{\widehat p}}$, $\| \widetilde y^i \|^p_{\bD^{p}}$, $\| \widetilde y^i \|^{p\sq}_{\bD^{p\sq}}$, $\| \xi^i \|^p_{\bL^{p}}$, and $\| f^i(0,0) \|^p_{\bH^{p,\alpha}}$ for $i=1,2$ and $\widehat p$ is defined by \eqref{eq:def_widehat_p}.
\end{Lemma}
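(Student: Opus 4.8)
The plan is to mimic the proof of \cite[Theorem 2.2]{bouc:poss:zhou:15}, replacing its Lipschitz ingredients by the monotone a priori estimates established above and using the Skorokhod condition to control the reflection term. Writing $\delta\widetilde y=\widetilde y^1-\widetilde y^2$, $\delta\widetilde z=\widetilde z^1-\widetilde z^2$, $\delta\widetilde m=\widetilde m^1-\widetilde m^2$ and $\delta\widetilde k=\widetilde k^1-\widetilde k^2$, I would first decompose the driver difference as $f^1(u,\widetilde y^1_u,\widetilde z^1_u)-f^2(u,\widetilde y^2_u,\widetilde z^2_u)=\delta f(u,\widetilde y^1_u,\widetilde z^1_u)+\big(f^2(u,\widetilde y^1_u,\widetilde z^1_u)-f^2(u,\widetilde y^2_u,\widetilde z^2_u)\big)$ and linearize the last term exactly as in \eqref{eq:linear_procedure}: by {\bf H2} and {\bf H3} it equals $\lambda_u\,\delta\widetilde y_u+\zeta_u\cdot\delta\widetilde z_u$ with $\lambda_u\leq L_1$ and $|\zeta_u|\leq L_2$. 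Thus $(\delta\widetilde y,\delta\widetilde z,\delta\widetilde m)$ solves a linear BSDE with source term $\delta f(\cdot,\widetilde y^1,\widetilde z^1)$, terminal value $\delta\xi$, drift coefficients $\lambda,\zeta$, and an additional finite-variation term $\int d\delta\widetilde k$ (which is of bounded variation but, unlike in \eqref{eq:supersolution_general_BSDE}, no longer monotone).

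Next I would apply the It\^o formula to $e^{\alpha t}|\delta\widetilde y_t|^p$ exactly as in the derivation of \eqref{eq:estim_2_9} (for $p\geq 2$) and \eqref{eq:estim_2_10} (for $p\in(1,2)$), which only uses the monotonicity {\bf H2} of $f^2$ in $y$ and the Lipschitz property {\bf H3} in $z$. Absorbing the $|\delta\widetilde y|^p$ term by taking $\alpha$ large, and treating the product $e^{\alpha u}\phi_p(\delta\widetilde y_{u-})\,\delta f(u,\widetilde y^1_u,\widetilde z^1_u)$ with Young's inequality, produces the coefficient $\eps$ in front of $\|\delta f(\widetilde y^1,\widetilde z^1)\|^p_{\bH^{p,\alpha}}$ and leaves on the right-hand side $\|\delta\xi\|^p_{\bL^p}$, $\|e^{\alpha\cdot}\delta\widetilde y\|^p_{\bD^p}$ and the reflection term $\bE\big(\int_0^T e^{\alpha s}\phi_p(\delta\widetilde y_{s-})\,d\delta\widetilde k_s\big)^+$.

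The heart of the argument — and the step I expect to be the main obstacle — is the control of this reflection term. Here I would invoke the Skorokhod conditions: $d\widetilde k^1$ charges only $\{\widetilde y^1_-=S^1_-\}$ and $d\widetilde k^2$ only $\{\widetilde y^2_-=S^2_-\}$, together with $\widetilde y^i\geq S^i$. On $\{\widetilde y^1_-=S^1_-\}$ one has $\delta\widetilde y_-=S^1_--\widetilde y^2_-\leq\delta S_-$, while on $\{\widetilde y^2_-=S^2_-\}$ one has $\delta\widetilde y_-\geq\delta S_-$; since $\phi_p$ is nondecreasing and $\phi_p(x)\leq|x|^{p-1}$, this yields $\int_0^T\phi_p(\delta\widetilde y_{s-})\,d\delta\widetilde k_s\leq\int_0^T|\delta S_{s-}|^{p-1}\,(d\widetilde k^1_s+d\widetilde k^2_s)$. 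A further Young inequality splits this into $\delta_0\|\delta S\|^p_{\bD^p}$ plus a multiple of $(\widetilde k^1_T)^p+(\widetilde k^2_T)^p$, and the latter is finite by Proposition \ref{prop:a_priori_estim_N} applied to each $\widetilde k^i$. This is precisely where the dependence of $C^\alpha$ on $\|\Psi\|^p_{\bL^{\widehat p}}$, $\|\widetilde y^i\|^p_{\bD^p}$, $\|\widetilde y^i\|^{p\sq}_{\bD^{p\sq}}$, $\|\xi^i\|^p_{\bL^p}$ and $\|f^i(0,0)\|^p_{\bH^{p,\alpha}}$ enters; the higher-order norms $\bD^{p\sq}$ and $\bL^{\widehat p}$ are forced by the polynomial growth {\bf H4} through the term $\widehat{\Psi}\,(1+|\widetilde y^i|^\sq)$, which is absent in the Lipschitz setting of \cite{bouc:poss:zhou:15}, and the exponents must be checked to respect \eqref{eq:cond_int} and \eqref{eq:def_widehat_p}.

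Finally I would eliminate $\|e^{\alpha\cdot}\delta\widetilde y\|^p_{\bD^p}$ by invoking the estimate \eqref{eq:stability_RBSDE_Y} of Lemma \ref{lem:a_priori_estimate}, obtained from the linearized Snell-envelope representation \eqref{eq:linear_snell_envelop}, which bounds it by $\|\delta\xi\|^p_{\bL^p}$, $\|\delta S\|^p_{\bD^p}$ and $\bE\big(\int_0^T|\delta f(s,\widetilde y^1_s,\widetilde z^1_s)|\,ds\big)^p\leq C\|\delta f(\widetilde y^1,\widetilde z^1)\|^p_{\bH^{p,\alpha}}$ (Cauchy--Schwarz in time together with $e^{\alpha s}\geq 1$). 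Collecting the three contributions, folding the $\delta f$-term into the constant $C^\alpha$, and relabelling $\eps$ then gives \eqref{eq:stability_RBSDE_N}. The genuinely new point relative to \cite{bouc:poss:zhou:15} is that every nonlinear $y$-contribution is estimated through the growth function $\widehat{\Psi}$ and the spaces $\bD^{p\sq},\bL^{\widehat p}$ rather than through a Lipschitz constant, so the only real care required is in matching exponents along the chain of H\"older and Young inequalities.
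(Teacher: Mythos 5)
Your proposal follows the paper's own route: apply the It\^o/\emph{a priori} estimates \eqref{eq:estim_2_9}--\eqref{eq:estim_2_10} to the difference of the two solutions, control the reflection term through the Skorokhod condition and the monotonicity of $\phi_p$ (yielding exactly the paper's inequality $\int_0^T e^{\alpha s}\phi_p(\delta\widetilde y_{s-})\,d(\delta\widetilde k)_s\le\int_0^T e^{\alpha s}\phi_p(\delta S_{s-})\,d(\delta\widetilde k)_s$ before H\"older/Young), bound $\widetilde k^i_T$ via Proposition \ref{prop:a_priori_estim_N} and eliminate $\|\delta\widetilde y\|_{\bD^p}$ via \eqref{eq:stability_RBSDE_Y}. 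The one detail to tighten is your last step: to keep an arbitrary $\eps$ in front of $\|\delta f(\widetilde y^1,\widetilde z^1)\|^p_{\bH^{p,\alpha}}$ you should estimate $\bE\big(\int_0^T|\delta f|\,ds\big)^p\le \alpha^{-p/2}\|\delta f\|^p_{\bH^{p,\alpha}}$ by Cauchy--Schwarz against $e^{-\alpha s}$ and take $\alpha$ large, rather than using $e^{\alpha s}\ge 1$, which only produces a fixed constant that cannot be ``relabelled'' as $\eps$.
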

\begin{proof}
The arguments are the same as \cite[Proposition 3.3]{bouc:poss:zhou:15}. Indeed we can use Estimates \eqref{eq:estim_2_9} and \eqref{eq:estim_2_10} and the Skorokhod condition:
$$ \bE \left[  \int_0^T e^{\alpha s} \phi_p(\delta \widetilde y_{s-}) d(\delta \widetilde k_s)\right] \leq \bE \left[  \int_0^T e^{\alpha s} \phi_p(\delta S_{s-}) d(\delta \widetilde k_s)\right] ,$$
the function $\phi_p(x) = |x|^{p-1}\mbox{sgn}(x)$ being non-decreasing. We conclude using H\"older's inequality. 
\end{proof}

As before the right-hand side of \eqref{eq:stability_RBSDE_N} will be finite if we have the same condition as in Lemma \ref{lem:L2_approx}.

\begin{Prop}[Theorem 3.1 of \cite{bouc:poss:zhou:15}] \label{prop:ref_BSDE_monotone_driver}
Assume that $\xi \in \bL^{\bsp \sq}$, $S^+ \in \bD^{\bsp \sq}$ and $f^0 \in \bH^{\bsp \sq}$ for some $\bsp > 1$ with $\bsp > \varrho/(\varrho-1)$. There exists a unique solution $(Y,Z,M,K)$ to the reflected BSDE \eqref{eq:reflected_general_BSDE} in $\bD^{\bsp \sq}\times \bH^{p}\times \bM^{p} \times \bI^{p}$ for any $p$ such that
$$1 < p \leq \frac{\varrho \bsp}{\varrho + \bsp} < \bsp.$$
\end{Prop}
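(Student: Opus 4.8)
The plan is first to normalise the monotonicity constant and dispose of uniqueness, and then to obtain existence by a Lipschitz approximation of the driver that runs in parallel with the scheme of Lemma~\ref{lem:L2_approx} and Theorem~3.1 of \cite{bouc:poss:zhou:15}. By Remark~\ref{rem:H2_L1_equal_0} I may assume $L_1=0$, so $f$ is non-increasing in $y$; the change of variables $\bar y_t=e^{L_1t}\widetilde y_t$, $\bar S_t=e^{L_1t}S_t$, $d\bar k_t=e^{L_1t}d\widetilde k_t$ leaves the reflection $\widetilde y\geq S$ and the Skorokhod condition $\int_0^T(\widetilde y_{t-}-S_{t-})d\widetilde k_t=0$ invariant (since $e^{L_1t}>0$) and does not change the integrability class. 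For uniqueness, let $(\widetilde y^i,\widetilde z^i,\widetilde m^i,\widetilde k^i)$, $i=1,2$, be two solutions attached to the same $(\xi,f,S)$. Then $\delta\xi=\delta S=0$ and $\delta f=0$, so \eqref{eq:stability_RBSDE_Y} of Lemma~\ref{lem:a_priori_estimate} gives $\widetilde y^1=\widetilde y^2$ in $\bD^p$, and then \eqref{eq:stability_RBSDE_N} of Lemma~\ref{lem:a_priori_estimate_2} gives $\widetilde z^1=\widetilde z^2$ and $\widetilde m^1-\widetilde k^1=\widetilde m^2-\widetilde k^2$. Orthogonality of the $\widetilde m^i$ to $W$ together with the uniqueness of the Doob--Meyer decomposition (predictable non-decreasing $\widetilde k^i$, martingale $\widetilde m^i$) then yields $\widetilde m^1=\widetilde m^2$ and $\widetilde k^1=\widetilde k^2$.

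For existence the Lipschitz-in-$(y,z)$ case is exactly Theorem~3.1 of \cite{bouc:poss:zhou:15}, so it suffices to approximate. Following Step~3 of the proof of Lemma~\ref{lem:L2_approx}, I would set
\[
f_n(t,y,z)=\big(f(t,y,z)-f^0_t\big)\frac{n}{\Psi_t\vee n}+f^0_t ,
\]
which keeps the monotonicity (with $L_1=0$), the $z$-Lipschitz constant $L_2$ and $f_n^0=f^0$, satisfies {\bf H4} with the \emph{same} $\Psi$ (since $\tfrac{n\Psi_t}{\Psi_t\vee n}\le\Psi_t$, so {\bf C1} holds uniformly in $n$) as well as {\bf H4'} with $L_{\sq}=n$, and converges pointwise to $f$. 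Existence for these {\bf H4'} generators is obtained by a further inner Lipschitz approximation as in Steps~1--2 of Lemma~\ref{lem:L2_approx}, combined with the reflected stability estimates and the Lipschitz case above; this produces solutions $(\widetilde y^n,\widetilde z^n,\widetilde m^n,\widetilde k^n)$ of \eqref{eq:reflected_general_BSDE} with data $(\xi,f_n,S)$. Taking $p=\bsp\sq$ in \eqref{eq:a_priori_estim_Y_RBSDE} and using that the constant there depends only on $L_1,L_2,T$ (not on the growth constant) together with {\bf C1} and $S^+\in\bD^{\bsp\sq}$, the processes $\widetilde y^n$ are bounded in $\bD^{\bsp\sq}$ uniformly in $n$; then the supersolution estimate of Proposition~\ref{prop:a_priori_estim_N}, which involves $\|\Psi\|_{\bL^{\varrho}}$ (uniform in $n$), bounds $(\widetilde z^n,\widetilde m^n,\widetilde k^n)$ uniformly in $\bH^p\times\bM^p\times\bI^p$ for any $1<p\le\varrho\bsp/(\varrho+\bsp)$.

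The next step upgrades these bounds to a Cauchy property. Applying \eqref{eq:stability_RBSDE_Y} and \eqref{eq:stability_RBSDE_N} to the pair $(f_n,f_\ell)$ reduces matters to $\No{f_n(\cdot,\widetilde y^\ell,\widetilde z^\ell)-f_\ell(\cdot,\widetilde y^\ell,\widetilde z^\ell)}_{\bH^p}$; by construction this difference is bounded by $\Psi_u\,\1_{\Psi_u\ge n\wedge\ell}\,(1+|\widetilde y^\ell_u|^{\sq})$, exactly the term treated in the proof of Lemma~\ref{lem:L2_approx}. A H\"older split with exponents $\bsp/p$ and $\bsp/(\bsp-p)$ bounds its $\bH^p$-norm by the product of $\big(\bE\int_0^T(1+|\widetilde y^\ell_u|^{\sq})^{\bsp}du\big)^{p/\bsp}$, uniformly controlled by the $\bD^{\bsp\sq}$-bound, and $\big(\bE\int_0^T\Psi_u^{\,p\bsp/(\bsp-p)}\1_{\Psi_u\ge n\wedge\ell}\,du\big)^{(\bsp-p)/\bsp}$, which tends to $0$ by dominated convergence because $p\bsp/(\bsp-p)\le\varrho$, i.e. \eqref{eq:cond_int}, and $\Psi^{\varrho}$ is integrable. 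Hence $(\widetilde y^n,\widetilde z^n,\widetilde m^n-\widetilde k^n)$ is Cauchy in $\bD^p\times\bH^p\times\bM^p$ and, via Proposition~\ref{prop:a_priori_estim_N}, the $\widetilde k^n$ converge in $\bI^p$; decomposing the limiting supermartingale and passing to the limit in \eqref{eq:reflected_general_BSDE} gives $(\widetilde y,\widetilde z,\widetilde m,\widetilde k)$ in the stated spaces, with $\widetilde y\ge S$ surviving along an a.s.-convergent subsequence.

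The step I expect to be the main obstacle is passing the Skorokhod (minimality) condition to the limit, since $\widetilde k^n\to\widetilde k$ only in $\bI^p$ while the integrand $\widetilde y_{t-}-S_{t-}$ is merely c\`adl\`ag, so a direct convergence of the Stieltjes integrals is delicate. I would avoid this by invoking the optimal-stopping characterisation \eqref{eq:linear_snell_envelop}: each $\Lambda^n\widetilde y^n$ is the Snell envelope of the linearised data, the coefficients $\lambda^n\le 0$ and $|\zeta^n|\le L_2$ are uniformly controlled and, along the a.s.-convergent subsequence, converge to the linearisation coefficients of $f$ at $(\widetilde y,\widetilde z)$, so that the representation is stable and identifies $\Lambda\widetilde y$ as the Snell envelope associated with $f$. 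This representation is equivalent to the reflection together with the Skorokhod condition, exactly as in \cite[Proposition~3.1]{bouc:poss:zhou:15}, which closes the proof.
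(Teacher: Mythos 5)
Your overall architecture --- stability estimates for uniqueness, then a Lipschitz approximation of the driver combined with the {\it a priori} bounds of Lemma \ref{lem:a_priori_estimate} and Proposition \ref{prop:a_priori_estim_N} and a H\"older split of the error term $\Psi_u\1_{\Psi_u\geq n\wedge\ell}(1+|\widetilde y^\ell_u|^\sq)$ --- is the same as the paper's, and your uniqueness argument and Cauchy estimates are correct. The organization of the truncations differs: you first truncate the growth function $\Psi$ (reducing {\bf H4} to {\bf H4'} with $L_\sq=n$, as in Step 3 of Lemma \ref{lem:L2_approx}) and only then invoke the bounded machinery, whereas the paper first truncates the data $(\xi,f^0,S)$ so that the solution is uniformly bounded in sup norm, then regularizes $y\mapsto f(t,y,V_t)$ by convolution with the locally monotone approximants of Briand--Carmona, and closes the $z$-dependence by a Picard iteration. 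Both orderings lead to the same estimates, and yours has the merit of making the passage from {\bf H4} to {\bf H4'} explicit.

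The one place where your route is genuinely weaker is the recovery of the Skorokhod condition in the limit. You propose to deduce it from stability of the Snell-envelope representation \eqref{eq:linear_snell_envelop}, which requires the linearization coefficients $\lambda^n$ and $\zeta^n$ to converge to those of $f$ at $(\widetilde y,\widetilde z)$. These coefficients are difference quotients multiplied by indicators $\1_{\widetilde y^n_s\neq 0}$ and $\1_{z^{n,i}_s\neq 0}$; under {\bf H2} the $\lambda^n$ are only bounded above, and neither the quotients nor the indicators need converge on the sets where $\widetilde y$ or $\widetilde z$ vanish, so the claim that ``the representation is stable'' is not justified as stated (the essential supremum over stopping times under the changing measures adds a further layer of difficulty). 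The paper avoids this entirely: it passes the reflection and the Skorokhod condition to the limit directly, as in Step (iii) of the proof of Theorem 3.1 in \cite{bouc:poss:zhou:15}, using the strong convergence of $\widetilde y^n$ in $\bD^p$ and of $\widetilde k^n$ in $\bI^p$. You should replace the Snell-envelope argument by that direct passage, which the convergences you have already established support; as written, the last step of your proof is not complete.
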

\begin{proof}
Uniqueness is a direct consequence of Estimates \eqref{eq:stability_RBSDE_Y} and \eqref{eq:stability_RBSDE_N}. W.l.o.g. we can assume that $L_1=0$. For the existence of a solution, we proceed in several steps. 
\begin{itemize}
\item {\bf Step 1.} Assume that {\bf (H')} holds and that $\xi$, $f^0$ and $S^+$ are bounded: there exists a constant $L_\infty$ such that a.s. 
\end{itemize}
\begin{equation} \label{eq:boundedness_condition}
|\xi| +\sup_{t \in [0,T]} |f^0_t| + \sup_{t \in [0,T]}  S^+_t  \leq L_\infty.
\end{equation}
Then the estimate of Proposition \ref{prop:a_priori_estim_N} holds for any $p> 1$. We denote
$$\bH^\infty = \bigcap_{\mathfrak{e} \geq \bsp \sq} \bH^{\mathfrak{e}}.$$
We proceed in two substeps. 

\begin{itemize}
\item[$\star$] {\bf Substep i.} Let us take $V \in \bH^{\infty}$ and we denote by $g(t,y)$ the function $f(t,y,V_t)$.
\end{itemize}
The generator $g$ satisfies the same condition {\bf (H')} as $f$, with $g^0 \in \bH^{\infty}$.
From \cite[Theorem 3.1]{bouc:poss:zhou:15}, there exists a unique solution $(\widetilde y^n,\widetilde z^n,\widetilde m^n,\widetilde k^n) \in \bS^{p}\times \bH^{p}\times \bM^{p} \times \bI^{p}_+$ to the reflected BSDE \eqref{eq:reflected_general_BSDE} where $g$ is replaced by $g_{n}$:
\begin{equation*}  
\widetilde y^{n}_t = \xi +  \int_t^T g_{n}(u,\widetilde y^n_u) du - \int_t^T \widetilde z^{n}_u dW_u - \int_t^T d\widetilde m^{n}_u + \int_t^T d\widetilde k^{n}_u
\end{equation*}
with $\widetilde y^{n}_t \geq S_t$ and $\int_0^T (\widetilde y^{n}_{t-} - S_{t-}) d\widetilde k^{n}_t = 0$, $\bP$-a.s. Here $g_n$ is defined as in \cite{bria:carm:00} (see also Equation \eqref{eq:lip_approx_gene_2}), that is by the convolution product
\begin{equation} \label{eq:lip_approx_gene}
g_n (t,.) = \varsigma_n *  (\Theta_{n+1} g(t,.)) ,\ t \in [0,T].
\end{equation}
where 
\begin{itemize}
\item $\varsigma : \bR \mapsto \bR_+$ is a non-negative function with the unit ball for support and such that $\int \varsigma(u)du = 1$ and we define for each integer $n>1$, $\varsigma_n(u) =n\varsigma(nu)$. 
\item For each integer $n$, $\Theta_n$ is a $C^\infty$ function from $\bR$ to $\bR_+$ such that $0\leq \Theta_n \leq 1$, $\Theta_n(u)=1$ for $|u| \leq n$ and $\Theta_n(u)=0$ as soon as $|u|>n+1$. 
\end{itemize}
This function $g_n$ is globally Lipschitz w.r.t. $y$ uniformly in $t$ and $\omega$ with $|g_n(t,0)|\leq  |f^0_t| + 2L_\sq$. Moreover for any $(t,\omega,y)$
\begin{equation} \label{eq:mono_prop_g_n}
yg_n(t,y) \leq (|f^0_t| + 2L_\sq)|y|,
\end{equation}
and for any $y$ and $y'$ in the ball $\overline{B(0,n)}$ then 
$$(y-y')(g_n(t,y)-g_n(t,y')) \leq 0.$$
In other words $g_n$ is only locally monotone (only in a given ball with the radius depending on $n$). 

From \eqref{eq:mono_prop_g_n}, \eqref{eq:linear_snell_envelop} and \eqref{eq:a_priori_estim_Y_RBSDE}
\begin{equation*}  
\sup_n \sup_{t\in [0,T]} |\widetilde y^n_t | \leq C_{L_\sq,L_\infty,T} < +\infty.
\end{equation*}
Hence we consider only the case $n \geq C_{L_\sq,L_\infty,T}$. Let us take $n' \geq n\geq C_{L_\sq,L_\infty,T}$ and 
$$\delta \widetilde y = \widetilde y^{n'} - \widetilde y^n, \quad \delta \widetilde z = \widetilde z^{n'}- \widetilde z^n, \quad \delta \widetilde m = \widetilde m^{n'} - \widetilde m^n, \quad \delta \widetilde k = \widetilde k^{n'}- \widetilde k^n.$$
We apply It\^o's formula to $(\delta  \widetilde y)^2$: for $t \in [0,T]$
\begin{eqnarray*}
&& \left| \delta  \widetilde y_t \right|^2 +  \int_t^T |\delta  \widetilde z_u|^2 du + \int_t^T d [\delta ( \widetilde m- \widetilde k)]_u \\
&&\quad =  2 \int_t^T  \left( g_\ell(u, \widetilde y^\ell_u) - g_n(u, \widetilde y^n_u) \right) (\delta  \widetilde y_{u}) du  \\
&& \qquad - 2 \int_t^T (\delta  \widetilde y_{u-}) \delta  \widetilde z_u dW_u  - 2 \int_t^T(\delta  \widetilde y_{u-}) d(\delta ( \widetilde m- \widetilde k))_u. 
\end{eqnarray*}
Since $| \widetilde y^n_t|\leq n \leq n'$, we use the local monotonicity of $g_l$ and we obtain:
\begin{eqnarray*}
&& \left| \delta  \widetilde y_t \right|^2 +  \int_t^T |\delta  \widetilde z_u|^2 du + \int_t^T d [\delta ( \widetilde m- \widetilde k)]_u  \leq 2 \int_t^T \left[ g_\ell(u, \widetilde y^n_u) - g_n(u, \widetilde y^n_u) \right] (\delta  \widetilde y_{u-}) du \\
& & \qquad - 2 \int_t^T (\delta  \widetilde y_{u-}) \delta  \widetilde z_u dW_u - 2 \int_t^T (\delta  \widetilde y_{u-}) d(\delta ( \widetilde m- \widetilde k))_u. 
\end{eqnarray*}
The Skorokhod condition implies that 
$$\bE \left[ \int_t^T e^{\alpha u} (\delta  \widetilde y_{u-}) d(\delta  \widetilde k)_u \right] \leq \bE \left[ \int_t^T e^{\alpha u} (\delta S_{u-}) d(\delta  \widetilde k)_u \right] = 0.$$
Since the set $\{\delta  \widetilde y_u \neq \delta  \widetilde y_{u-}\}$ is countable, classical arguments (using BDG inequality) imply that there exists a constant $C$ such that 
\begin{eqnarray*} 
&& \bE \left[ \sup_{t\in [0,T]} \left| \delta  \widetilde y_t \right|^2 +  \int_0^T |\delta  \widetilde z_u|^2 du + \int_0^T d [\delta ( \widetilde m- \widetilde k)]_u \right] \\
&& \qquad \leq  C \bE \left[  \int_0^T \left| g_{n'}(u, \widetilde y^n_u) - g_n(u, \widetilde y^n_u) \right| |\delta  \widetilde y_{u}| du\right].
\end{eqnarray*}
But the last expectation is equal to 
\begin{eqnarray*}
&& \bE \left[ \int_0^T \1_{| \widetilde y^n_u|+| \widetilde y^{n'}_u|\leq 2C_{L_\sq,L_\infty,T}} \left| g_{n'}(u, \widetilde y^n_u) - g_n(u, \widetilde y^n_u) \right| |\delta \widetilde y_u| du\right] \\
&& \qquad \leq 2C_{L_f,L_\infty,T}\bE \left[ \sup_{|y| \leq C_{L_\sq,L_\infty,T}} \int_0^T \left| g_{n'}(u,y) - g_n(u,y) \right|  du\right]
\end{eqnarray*}
We argue as in Step 1 of the proof of Lemma \ref{lem:L2_approx} and we obtain that $( \widetilde y^n, \widetilde z^n, \widetilde \mu^n=\widetilde m^n- \widetilde k^n) \in \bD^{2} \times \bH^{2}\times \bM^{2}$ is a Cauchy sequence. But we also have:
\begin{equation*}  
\widetilde \mu^{n'}_t-\widetilde \mu^n_t =  \int_0^t (g_{n'}(u, \widetilde y^{n'}_u) -g_{n}(u, \widetilde y^n_u)) du - \int_0^t ( \widetilde z^{n'}_u- \widetilde z^{n}_u) dW_u + (\widetilde y^{n'}_t -  \widetilde y^n_t) -(\widetilde y^{n'}_0 -  \widetilde y^n_0) .
\end{equation*}
Hence $\widetilde \mu^n$ converges also in $\bD^{2}$. Arguing as Step (iii) in the proof of \cite[Theorem 3.1]{bouc:poss:zhou:15}, we deduce that the limit $(\widetilde y,\widetilde z,\widetilde m,\widetilde k)$ satisfies the reflected BSDE 
\begin{equation} \label{eq:RBSDE_fixed_V}  
\widetilde y_t = \xi +  \int_t^T f(u,\widetilde y_u,V_u) du - \int_t^T \widetilde z_u dW_u - \int_t^T d\widetilde m_u + \int_t^T d\widetilde k_u
\end{equation}
with $\widetilde y_t \geq S_t$ and $\int_0^T (\widetilde y_{t-} - S_{t-}) d\widetilde k_t = 0$, $\bP$-a.s. From Proposition \ref{prop:a_priori_estim_N} with $\varrho = +\infty$ and $\bsp =+\infty$, we obtain for any $p \geq 1$
\begin{equation*} 
 \|\widetilde z\|^p_{\bH^{p}}+ \|\widetilde m\|^p_{\bM^{p}} + \| \widetilde k \|^p_{\bI^{p}} < +\infty.
\end{equation*}
Hence for each $V \in \bH^{\infty}$, we have a unique solution  $(\widetilde y,\widetilde z,\widetilde m,\widetilde k) \in \bS^{\infty}\times \bH^{\infty}\times \bM^{\infty} \times \bI^{\infty}$ to \eqref{eq:RBSDE_fixed_V}.

\begin{itemize}
\item[$\star$] {\bf Substep ii.} General case under boundedness conditions.
\end{itemize}
We use a fixed point argument. Let $(\widetilde y^0,\widetilde z^0,\widetilde m^0,\widetilde k^0)=(0,0,0,0)$ and let $(\widetilde y^{n+1},\widetilde z^{n+1},\widetilde m^{n+1},\widetilde k^{n+1}) \in \bD^{\infty}\times \bH^{\infty}\times \bM^{\infty} \times \bI^{\infty}$ be the unique solution of the reflected BSDE:
\begin{equation*}  
\widetilde y^{n+1}_t = \xi +  \int_t^T f(u,\widetilde y^{n+1}_u,\widetilde z^n_u) du - \int_t^T \widetilde z^{n+1}_u dW_u - \int_t^T d\widetilde m^{n+1}_u + \int_t^T d\widetilde k^{n+1}_u
\end{equation*}
with $\widetilde y^{n+1}_t \geq S_t$ and $\int_0^T (\widetilde y^{n+1}_{t-} - S_{t-}) d\widetilde k^{n+1}_t = 0$, $\bP$-a.s. From our first substep, the sequence is well-defined. For any $i$, we denote 
$$\delta \widetilde y^{n,i} = \widetilde y^{n+i} - \widetilde y^n, \delta \widetilde z^{n,i} = \widetilde z^{n+i} - \widetilde z^n, \delta \widetilde m^{n,i} = \widetilde m^{n+i} - \widetilde m^n,\delta \widetilde k^{n,i} = \widetilde k^{n+i} - \widetilde k^n$$
and $\widetilde \ell^n := \widetilde m^n - \widetilde k^n$. 
We apply It\^o's formula to $e^{\alpha t} |\delta \widetilde y_t^{n,i}|^2$:
\begin{eqnarray*}
&&e^{\alpha t} \left| \delta \widetilde y^{n,i}_t \right|^2 +  \int_t^T e^{\alpha u} |\delta \widetilde z^{n,i}_u|^2 du +  \int_t^T e^{\alpha u} d [\delta \widetilde \ell^{n,i}]_u \\
&&\quad \leq  2 \int_t^T e^{\alpha u}  \delta  \widetilde y^{n,i}_u \left[ f(u,\widetilde y^{n+i}_u,\widetilde z^{n-1+i}_u) - f(u,\widetilde y^{n}_u,\widetilde z^{n-1}_u) \right] du \\
&&\qquad - \alpha\int_t^T e^{\alpha u} \left| \delta \widetilde y^{n,i}_u \right|^2 du - 2 \int_t^T e^{\alpha u}\delta \widetilde y^{n,i}_u \delta  \widetilde z^{n,i}_u dW_u  - 2 \int_t^T e^{\alpha u} \delta \widetilde y^{n,i}_{u-} d(\delta \widetilde \ell^{n,i})_u \\
&&\quad \leq (2 L_1 - \alpha) \int_t^T e^{\alpha u}\left| \delta \widetilde y^{n,i}_u \right|^{2} du +2L_2 \int_t^T e^{\alpha u}\left| \delta \widetilde y^{n,i}_u \right|   |\delta \widetilde z^{n-1,i}_u| du \\
&&\qquad 
 - 2 \int_t^T e^{\alpha u}  \delta \widetilde y^{n,i}_u \delta  \widetilde z^{n,i}_u dW_u   
- 2 \int_t^T e^{\alpha u} \delta \widetilde y^{n,i}_{u-} d(\delta \widetilde m^{n,i})_u
\end{eqnarray*}
since 
$$ \int_t^T e^{\alpha u} \delta \widetilde y^{n,i}_{u-} d(\delta \widetilde k^{n,i})_u \leq \int_t^T e^{\alpha u} \delta S_{u-} d(\delta \widetilde k^{n,i})_u =0.$$
Young's inequality gives for any $\eps > 0$
\begin{eqnarray}\nonumber
&&e^{\alpha t} \left| \delta \widetilde y^{n,i}_t \right|^2 + \int_t^T e^{\alpha u} |\delta \widetilde z^{n,i}_u|^2 du +\int_t^T e^{\alpha u}  d [\delta \widetilde \ell^{n,i}]_u \\ \nonumber
&&\quad \leq \left(2 L_1 + \frac{L_2^2}{\eps} - \alpha \right) \int_t^T e^{\alpha u}\left| \delta \widetilde y^{n,i}_u \right|^{2} du +\eps \int_t^T e^{\alpha u} |\delta \widetilde z^{n-1,i}_u|^2 du \\ \label{eq:tech_estim_L_p_RBSDE}
&&\qquad  - 2 \int_t^T e^{\alpha u} \delta \widetilde y^{n,i}_{u-} \left( d(\delta \widetilde m^{n,i})_u+ \delta  \widetilde z^{n,i}_u dW_u  \right) .
\end{eqnarray}
Take $\eps =1/2$, $\alpha = 1 + 2 L_1 + 2L_2^2$ and the expectation and deduce that 
\begin{eqnarray*}
&&\bE  \int_0^T e^{\alpha u}\left| \delta \widetilde y^{n,i}_u \right|^{2} du  + \bE \int_0^T e^{\alpha u} |\delta \widetilde z^{n,i}_u|^2 du +  \bE\int_t^T e^{\alpha u}  d [\delta \widetilde \ell^{n,i}]_u  \\
&&\qquad \leq \frac{1}{2} \bE \int_0^T e^{\alpha u} |\delta \widetilde z^{n-1,i}_u|^2 du.
\end{eqnarray*}
Thus $(\widetilde y^n,\widetilde z^n,\widetilde \ell^n)$ is a Cauchy sequence in $\bH^2 \times \bH^2 \times \bM^2$, and using BDG inequality we will have convergence in $\bD^2 \times \bH^2 \times \bM^2$. Then the conclusion follows by the same arguments as in \cite[Theorem 3.1]{bouc:poss:zhou:15}. Then since \eqref{eq:boundedness_condition} holds, from Lemmas \ref{lem:a_priori_estimate} and \ref{lem:a_priori_estimate_2}, we deduce that the limit is also in $\bD^{\infty}\times \bH^{\infty}\times \bM^{\infty} \times \bI^{\infty}$.

\begin{itemize}
\item {\bf Step 2.} Assume that for some $\bsp > \varrho/(\varrho-1)$, $\xi$, $f^0$ and $S^+$ are in $\bL^{\bsp \sq}\times \bH^{\bsp \sq } \times \bD^{\bsp \sq}$. We fix $1<p \leq \varrho\bsp / (\varrho+\bsp)$.
\end{itemize}
For any $n\in \bN^*$
$$\xi_n  = \frac{n\xi}{n\vee |\xi|},\quad S^n_t = \frac{n S_t}{n\vee S_t}$$
and  
$$f_n(t,y,z) = (f(t,y,z)-f^0_t) + \frac{n f^0_t }{n\vee |f^0_t|}.$$
Then 
$$|f_n(t,0,0)| =  \left| \frac{n f^0_t }{n\vee |f^0_t|} \right| \leq n.$$
We apply the result of Step 1: there exists a unique solution $(\widetilde y^n,\widetilde z^n,\widetilde m^n,\widetilde k^n) \in \bD^{\bsp \sq}\times \bH^{p}\times \bM^{p} \times \bI^{p}$ to the reflected BSDE:
\begin{equation*}  
\widetilde y^{n}_t = \xi_n +  \int_t^T f_n(u,\widetilde y^n_u,\widetilde z^n_u) du - \int_t^T \widetilde z^{n}_u dW_u - \int_t^T d\widetilde m^{n}_u + \int_t^T d\widetilde k^{n}_u
\end{equation*}
with $\widetilde y^{n}_t \geq S^n_t$ and $\int_0^T (\widetilde y^{n}_{t-} - S^n_{t-}) d\widetilde k^{n}_t = 0$, $\bP$-a.s. From Lemma \ref{lem:a_priori_estimate} and Estimate \eqref{eq:a_priori_estim_Y_RBSDE}, 
$$\sup_{n\in\bN} \|e^{\alpha .} \widetilde y^n \|_{\bD^{\bsp \sq}}  < +\infty.$$
Thus from Proposition \ref{prop:a_priori_estim_N}, the $\bH^p\times \bM^p \times \bI^p$-norm of the sequence $(\widetilde z^n,\widetilde m^n,\widetilde k^n)$ is bounded uniformly w.r.t. $n$.
We denote $\widetilde \ell :=\widetilde m-\widetilde k$, we take $n\leq n'$, we define again:
$$\delta \widetilde y = \widetilde y^{n'} - \widetilde y^n, \quad \delta \widetilde z = \widetilde z^{n'}- \widetilde z^n, \quad \delta \widetilde m = \widetilde m^{n'} - \widetilde m^n, \quad \delta \widetilde k = \widetilde k^{n'}- \widetilde k^n, \quad \delta \widetilde \ell = \widetilde \ell^{n'}- \widetilde \ell^n,$$
and we apply It\^o's formula for $t \in [0,T]$ 
\begin{eqnarray*}
&&e^{\alpha t} |\delta \widetilde y_t|^p + c(p) \int_t^T  |\delta \widetilde y_{s-}|^{p-2}  \1_{\delta \widetilde y_{s-}\neq 0} (\delta \widetilde z_s)^2 ds +c(p) \int_t^T  |\delta \widetilde y_{s-}|^{p-2}  \1_{\delta \widetilde y_{s^-}\neq 0} d[ \delta \widetilde \ell ]^c_s \\
&& +  c(p) \sum_{t< s \leq T} e^{\alpha s} |\Delta (\delta\widetilde \ell)_s|^2 \left[  |\delta \widetilde y_{s-}|^2 \vee |\delta \widetilde y_{s-}+ \Delta (\delta \widetilde \ell)_s|^{2} \right]^{\frac{p}{2}-1} \1_{|\delta \widetilde y_{s-}| \vee |\delta \widetilde y_{s-}+ \Delta (\delta\widetilde \ell)_s| \neq  0} \\
&&\quad \leq e^{\alpha T}|\delta \xi|^p +  p \int_t^T e^{\alpha u} \phi_p(\delta \widetilde y_{u-}) \left[ f_{n'}(u,\widetilde y^{n'}_u,\widetilde z^{n'}_u) - f_n(u,\widetilde y^n_u,\widetilde z^n_u) \right]du \\
&&\qquad - \alpha\int_t^T e^{\alpha u} \left| \delta \widetilde y_u \right|^p du - p \int_t^T e^{\alpha u} \phi_p(\delta \widetilde y_{u-}) d(\delta \widetilde \ell)_u . 
\end{eqnarray*}
By the assumptions on $f$, thus on $f_{n'}$ and Young's inequality
\begin{eqnarray*}
&& p \int_t^T e^{\alpha u} \phi_p(\delta \widetilde y_{u-})  \left[ f_{n'}(u,\widetilde y^{n'}_u,\widetilde z^{n'}_u) - f_{n'}(u,\widetilde y^n_u,\widetilde z^n_u) \right]du- \alpha\int_t^T e^{\alpha u} \left| \delta \widetilde y_u \right|^p du \\
&& \quad \leq \left( pL_1 + \frac{p}{(p-1)\wedge 1}L_2^2 -\alpha  \right) \int_t^T e^{\alpha u} \left| \delta \widetilde y_u \right|^p du \\
&& \qquad + \frac{c(p)}{2} \int_t^T e^{\alpha u}|\delta \widetilde z_u|^2 |\delta \widetilde y_{u-}|^{p-2}  \1_{\delta \widetilde y_{u-}\neq 0}du .
\end{eqnarray*}
Moreover from the Skorohod condition:
$$\int_t^T e^{\alpha u}\phi_p(\delta \widetilde y_{u-}) d(\delta \widetilde k)_u \leq \int_t^T e^{\alpha u}\phi_p(\delta S_{u-}) d(\delta \widetilde k)_u.$$
Hence for $\alpha \geq pL_1 + \frac{p}{(p-1)\wedge 1}L_2^2$
\begin{eqnarray*}
&&e^{\alpha t} |\delta \widetilde y_t|^p +\frac{c(p)}{2} \int_t^T  |\delta \widetilde y_{s-}|^{p-2}  \1_{\delta \widetilde y_{s-}\neq 0} (\delta \widetilde z_s)^2 ds  +c(p) \int_t^T  |\delta \widetilde y_{s-}|^{p-2}  \1_{\delta \widetilde y_{s-}\neq 0} d[ \delta \widetilde \ell ]^c_s \\
&& +  c(p) \sum_{t< s \leq T} e^{\alpha s} |\Delta (\delta\widetilde \ell)_s|^2 \left[  |\delta \widetilde y_{s-}|^2 \vee |\delta \widetilde y_{s-}+ \Delta (\delta \widetilde \ell)_s|^{2} \right]^{\frac{p}{2}-1} \1_{|\delta \widetilde y_{s-}| \vee |\delta \widetilde y_{s-}+ \Delta (\delta\widetilde \ell)_s| \neq  0} \\
&&\quad \leq e^{\alpha T}|\delta \xi|^p +  p \int_t^T e^{\alpha u} \phi_p(\delta \widetilde y_{u-}) \left[ f_{n'}(u,\widetilde y^n_u,\widetilde z^n_u) - f_n(u,\widetilde y^n_u,\widetilde z^n_u) \right]du \\
&&\qquad - p \int_t^T e^{\alpha u} \phi_p(\delta \widetilde y_{u-}) (\delta \widetilde z_u dW_u + d\delta \widetilde m_u) + p \int_t^T e^{\alpha u}\phi_p(\delta S_{u-}) d(\delta \widetilde k)_u. 
\end{eqnarray*}
Since the $\bI^{p,\alpha}$-norm of $(\widetilde k^n)$ is bounded uniformly w.r.t. $n$, we deduce that there exists a constant $C$ such that 
\begin{eqnarray*}
&& \|e^{\alpha .} \delta \widetilde y^n \|_{\bD^{p}} + \|e^{\alpha .}\delta \widetilde z^n \|_{\bH^{p}} + \|e^{\alpha .} \delta (\widetilde m-\widetilde k)^n \|_{\bM^{p}} \\
&&\qquad  \leq C \left( \|\delta \xi\|_{\bL^p} + \|e^{\alpha  .} \delta S \|_{\bD^p} +\|e^{\alpha .} \delta f^0 \|_{\bH^{p}}   \right).
\end{eqnarray*}
Note that the constant $C$ depends in particular on $\|\Psi\|_{\bL^{\widehat p}}$ which will be finite since $\widehat p \leq \varrho$. Thus we have a Cauchy sequence in $\bD^{p}\times \bH^{p}\times \bM^{p}$, and in $\bD^{\bsp \sq}\times \bH^{p}\times \bM^{p}$, which converges to $(\widetilde y,\widetilde z,\widetilde \nu)$. We argue as in the Step 1,(iii) of the proof of \cite[Theorem 3.1]{bouc:poss:zhou:15}, to obtain the desired result.

\end{proof}

Remark \ref{rem:on_lipschitz_approx} also holds for this last result. In particular if {\bf (H')} holds then $\bsp = p$.

\bibliography{biblio}

\def\cprime{$'$}
\begin{thebibliography}{10}

\bibitem{almg:12}
R.~Almgren.
\newblock Optimal trading with stochastic liquidity and volatility.
\newblock {\em SIAM Journal on Financial Mathematics}, 3(1):163--181, 2012.

\bibitem{almg:chri:01}
R.~Almgren and N.~Chriss.
\newblock Optimal execution of portfolio transactions.
\newblock {\em Journal of Risk}, 3:5--40, 2001.

\bibitem{anki:jean:krus:13}
S.~Ankirchner, M.~Jeanblanc, and T.~Kruse.
\newblock B{SDE}s with {S}ingular {T}erminal {C}ondition and a {C}ontrol
  {P}roblem with {C}onstraints.
\newblock {\em SIAM J. Control Optim.}, 52(2):893--913, 2014.

\bibitem{bank:voss:16}
P.~{Bank} and M.~{Vo{\ss}}.
\newblock {Linear quadratic stochastic control problems with singular
  stochastic terminal constraint}.
\newblock {\em ArXiv e-prints}, November 2016.

\bibitem{bert:shre:78}
D.~P. Bertsekas and S.~E. Shreve.
\newblock {\em Stochastic optimal control}, volume 139 of {\em Mathematics in
  Science and Engineering}.
\newblock Academic Press, Inc. [Harcourt Brace Jovanovich, Publishers], New
  York-London, 1978.
\newblock The discrete time case.

\bibitem{bouc:poss:tan:15}
B.~Bouchard, D.~Possama{\"\i}, and X.~Tan.
\newblock A general doob-meyer-mertens decomposition for g-supermartingale
  systems.
\newblock {\em Electron. J. Probab.}, 21:21 pp., 2016.

\bibitem{bouc:poss:zhou:15}
B.~{Bouchard}, D.~{Possama{\"\i}}, X.~{Tan}, and C.~{Zhou}.
\newblock {A unified approach to a priori estimates for supersolutions of BSDEs
  in general filtrations}.
\newblock {\em To appear in Annals of Institut Henri Poincar\'e B}, 2017.

\bibitem{bria:carm:00}
P.~Briand and R.~Carmona.
\newblock B{SDE}s with polynomial growth generators.
\newblock {\em J. Appl. Math. Stochastic Anal.}, 13(3):207--238, 2000.

\bibitem{bria:dely:hu:03}
Ph. Briand, B.~Delyon, Y.~Hu, E.~Pardoux, and L.~Stoica.
\newblock {$L^p$} solutions of backward stochastic differential equations.
\newblock {\em Stochastic Process. Appl.}, 108(1):109--129, 2003.

\bibitem{dell:meye:78}
C.~Dellacherie and P.-A. Meyer.
\newblock {\em Probabilities and potential}, volume~29 of {\em North-Holland
  Mathematics Studies}.
\newblock North-Holland Publishing Co., Amsterdam-New York; North-Holland
  Publishing Co., Amsterdam-New York, 1978.

\bibitem{doob:01}
J.~L. Doob.
\newblock {\em Classical potential theory and its probabilistic counterpart}.
\newblock Classics in Mathematics. Springer-Verlag, Berlin, 2001.
\newblock Reprint of the 1984 edition.

\bibitem{elka:huan:97}
N.~El~Karoui and S.-J. Huang.
\newblock A general result of existence and uniqueness of backward stochastic
  differential equations.
\newblock In {\em Backward stochastic differential equations ({P}aris,
  1995--1996)}, volume 364 of {\em Pitman Res. Notes Math. Ser.}, pages 27--36.
  Longman, Harlow, 1997.

\bibitem{fors:kenn:12}
P.~A. Forsyth, J.~S. Kennedy, S.T. Tse, and H.~Windcliff.
\newblock Optimal trade execution: a mean quadratic variation approach.
\newblock {\em Journal of Economic Dynamics and Control}, 36(12):1971--1991,
  2012.

\bibitem{gath:schi:11}
J.~Gatheral and A.~Schied.
\newblock Optimal trade execution under geometric brownian motion in the
  almgren and chriss framework.
\newblock {\em International Journal of Theoretical and Applied Finance},
  14(03):353--368, 2011.

\bibitem{grae:hors:qiu:13}
P.~Graewe, U.~Horst, and J.~Qiu.
\newblock A non-markovian liquidation problem and backward spdes with singular
  terminal conditions.
\newblock {\em SIAM Journal on Control and Optimization}, 53(2):690--711, 2015.

\bibitem{grae:hors:sere:13}
P.~Graewe, U.~Horst, and E.~S{\'e}r{\'e}.
\newblock Smooth solutions to portfolio liquidation problems under
  price-sensitive market impact.
\newblock {\em Stochastic Processes and their Applications}, 2017.

\bibitem{hors:nauj:14}
U.~Horst and F.~Naujokat.
\newblock When to cross the spread? trading in two-sided limit order books.
\newblock {\em SIAM Journal on Financial Mathematics}, 5(1):278--315, 2014.

\bibitem{jaco:shir:03}
J.~Jacod and A.~N. Shiryaev.
\newblock {\em Limit theorems for stochastic processes}, volume 288 of {\em
  Grundlehren der Mathematischen Wissenschaften [Fundamental Principles of
  Mathematical Sciences]}.
\newblock Springer-Verlag, Berlin, second edition, 2003.

\bibitem{kall:02}
O.~Kallenberg.
\newblock {\em Foundations of modern probability.}
\newblock Springer: Berlin, 2002.

\bibitem{kara:95}
R.~L. Karandikar.
\newblock On pathwise stochastic integration.
\newblock {\em Stochastic Process. Appl.}, 57(1):11--18, 1995.

\bibitem{kazi:poss:zhou:15}
N.~Kazi-Tani, D.~Possama\"\i, and C.~Zhou.
\newblock Second-order {BSDE}s with jumps: formulation and uniqueness.
\newblock {\em Ann. Appl. Probab.}, 25(5):2867--2908, 2015.

\bibitem{klim:12}
T.~Klimsiak.
\newblock Reflected {BSDE}s with monotone generator.
\newblock {\em Electron. J. Probab.}, 17:no. 107, 25, 2012.

\bibitem{klim:13b}
T.~Klimsiak.
\newblock B{SDE}s with monotone generator and two irregular reflecting
  barriers.
\newblock {\em Bull. Sci. Math.}, 137(3):268--321, 2013.

\bibitem{klim:15}
T.~Klimsiak.
\newblock Reflected {BSDE}s on filtered probability spaces.
\newblock {\em Stochastic Process. Appl.}, 125(11):4204--4241, 2015.

\bibitem{krat:scho:13}
P.~Kratz and T.~Sch{\"o}neborn.
\newblock Portfolio liquidation in dark pools in continuous time.
\newblock {\em Mathematical Finance}, 2013.

\bibitem{krus:popi:17}
T.~Kruse and A.~Popier.
\newblock {$L^p$}-solution for bsdes with jumps in the case $p<2$.
\newblock {\em Stochastics}, 0(0):1--27, 0.

\bibitem{krus:popi:14}
T.~Kruse and A.~Popier.
\newblock Bsdes with monotone generator driven by brownian and poisson noises
  in a general filtration.
\newblock {\em Stochastics}, 88(4):491--539, 2016.

\bibitem{krus:popi:15}
T.~Kruse and A.~Popier.
\newblock Minimal supersolutions for \{BSDEs\} with singular terminal condition
  and application to optimal position targeting.
\newblock {\em Stochastic Processes and their Applications}, 126(9):2554 --
  2592, 2016.

\bibitem{lepe:mato:xu:05}
J.-P. Lepeltier, A.~Matoussi, and M.~Xu.
\newblock Reflected backward stochastic differential equations under
  monotonicity and general increasing growth conditions.
\newblock {\em Advances in applied probability}, 37(1):134--159, 2005.

\bibitem{mato:poss:zhou:15}
A.~Matoussi, D.~Possama\"\i, and C.~Zhou.
\newblock Robust utility maximization in nondominated models with 2{BSDE}: the
  uncertain volatility model.
\newblock {\em Math. Finance}, 25(2):258--287, 2015.

\bibitem{pard:99}
{\'E}.~Pardoux.
\newblock B{SDE}s, weak convergence and homogenization of semilinear {PDE}s.
\newblock In {\em Nonlinear analysis, differential equations and control
  ({M}ontreal, {QC}, 1998)}, volume 528 of {\em NATO Sci. Ser. C Math. Phys.
  Sci.}, pages 503--549. Kluwer Acad. Publ., Dordrecht, 1999.

\bibitem{pard:rasc:14}
E.~Pardoux and A.~Rascanu.
\newblock {\em {Stochastic Differential Equations, Backward SDEs, Partial
  Differential Equations}}, volume~69 of {\em Stochastic Modelling and Applied
  Probability}.
\newblock Springer-Verlag, 2014.

\bibitem{popi:16}
A.~Popier.
\newblock Limit behaviour of bsde with jumps and with singular terminal
  condition.
\newblock {\em ESAIM: PS}, 20:480--509, 2016.

\bibitem{poss:13}
D.~Possama{\"{\i}}.
\newblock Second order backward stochastic differential equations under a
  monotonicity condition.
\newblock {\em Stochastic Process. Appl.}, 123(5):1521--1545, 2013.

\bibitem{poss:tan:zhou:15}
D.~{Possama{\"\i}}, X.~{Tan}, and C.~{Zhou}.
\newblock {Stochastic control for a class of nonlinear kernels and
  applications}.
\newblock {\em To appear in Annals of Probability}, 2017.

\bibitem{schie:13}
A.~Schied.
\newblock A control problem with fuel constraint and dawson--watanabe
  superprocesses.
\newblock {\em The Annals of Applied Probability}, 23(6):2472--2499, 2013.

\bibitem{sone:touz:zhan:11b}
H.~M. Soner, N.~Touzi, and J.~Zhang.
\newblock Quasi-sure stochastic analysis through aggregation.
\newblock {\em Electron. J. Probab.}, 16:no. 67, 1844--1879, 2011.

\bibitem{sone:touz:zhan:12}
H.~M. Soner, N.~Touzi, and J.~Zhang.
\newblock Wellposedness of second order backward {SDE}s.
\newblock {\em Probab. Theory Related Fields}, 153(1-2):149--190, 2012.

\bibitem{sone:touz:zhan:13}
H.~M. Soner, N.~Touzi, and J.~Zhang.
\newblock Dual formulation of second order target problems.
\newblock {\em Ann. Appl. Probab.}, 23(1):308--347, 2013.

\bibitem{stro:vara:79}
D.~W. Stroock and S.~R.~S. Varadhan.
\newblock {\em Multidimensional diffusion processes}, volume 233 of {\em
  Grundlehren der Mathematischen Wissenschaften [Fundamental Principles of
  Mathematical Sciences]}.
\newblock Springer-Verlag, Berlin-New York, 1979.

\end{thebibliography}

\end{document}